\newtheorem{theoreme}{Theorem}[section]
\newtheorem{proposition}[theoreme]{Proposition}
\newtheorem{definition}[theoreme]{Definition}
\newtheorem{lemme}[theoreme]{Lemma}
\newtheorem{remarque}[theoreme]{Remark}
\newtheorem{corollaire}[theoreme]{Corollary}
\newcommand{\Heis}{{\operatorname{Heis}}}
\newcommand{\OO}{\operatorname{O}}
\newcommand{\PO}{\operatorname{PO}}
\newcommand{\Ein}{\operatorname{Ein}}
\newcommand{\Conf}{\operatorname{Conf}}
\newcommand{\Ad}{\operatorname{Ad}}
\newcommand{\Aut}{\operatorname{Aut}}
\newcommand{\calh}{{\mathcal{H}}}
\newcommand{\tm}{\tilde{{M}}}
\newcommand{\BZ}{{\bf Z}}
\newcommand{\NN}{{\bf N}} 
\newcommand{\RR}{{\bf R}}
\newcommand{\hm}{{\widehat{M}}}
\newcommand{\hx}{{\hat{x}}}
\newcommand{\hy}{{\hat{y}}}
\newcommand{\delx}{{\frac{\partial}{\partial x}}}
\newcommand{\dely}{{\frac{\partial}{\partial y}}}
\newcommand{\delz}{{\frac{\partial}{\partial z}}}
\newcommand{\heis}{{\operatorname{\mathfrak{heis}}}}
\newcommand{\liez}{{\mathfrak{z}}}
\newcommand{\lieu}{{\mathfrak{u}}}
\newcommand{\liezx}{{\mathfrak{z}_{\scriptscriptstyle{X}}}}
\newcommand{\lieg}{{\mathfrak{g}}}
\newcommand{\lieh}{{\mathfrak{h}}}
\newcommand{\liep}{{\mathfrak{p}}}
\newcommand{\oo}{{\mathfrak{o}}}
\newcommand{\liea}{{\mathfrak{a}}}
\newcommand{\lieq}{{\mathfrak{q}}}
\newenvironment{preuve}{\medskip \noindent {\bf Proof: }}
   {$\diamondsuit$ }
\begin{document}

\title[3-dimensional Lorentzian Lichnerowicz Conjecture]{The Lorentzian Lichnerowicz Conjecture for real-analytic,
  three-dimensional manifolds}
\author{Charles Frances and Karin Melnick}
\thanks{The second author was partially supported by a Joan and Joseph
Birman Fellowship for Women Scientists and NSF Award DMS-2109347}
\maketitle
\tableofcontents

\linespread{1.1}
\section{Introduction}

The Lichnerowicz Conjecture in conformal Riemannian geometry was proved
simultaneously by J. Ferrand and M. Obata.  Recall that the conformal
transformations of a semi-Riemannian manifold $(M,g)$ form the group
$$ \mbox{Conf}(M,[g]) = \{ f \in \mbox{Diff}(M) \ : \ f^*g = e^{2
  \lambda} g, \ \lambda \in C^\infty(M) \}$$
and that this is a Lie group provided $\mbox{dim }M \geq 3$.
A subgroup $H \leq \mbox{Conf}(M,[g])$ is called \emph{essential} if
it does not act isometrically with respect any
$g' = e^{2 \lambda} g$ in the conformal class $[g]$ of $g$.  The
identity component of a Lie group $H$ is denoted by $H^0$.

\begin{theoreme}[Ferrand '71 \cite{lf.lich}/ Obata '71 \cite{obata.lich}]
 \label{thm.riem.lich}
 Let $(M,g)$ be a  compact, Riemannian manifold with dimension $n \geq
 2$. If $\mbox{Conf}^{\;0}(M,[g])$ is essential, then 
  $(M,g)$ is conformally diffeomorphic to the round sphere ${\bf S}^n$.
\end{theoreme}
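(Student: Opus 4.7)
The plan is to follow Ferrand's dynamical approach, in which the essentialness hypothesis is leveraged to produce a collapsing sequence of conformal transformations whose limit structure forces $M$ to be conformally the round sphere. The first step is to argue that $\Conf^0(M,[g])$ cannot act properly on $M$. If it did, compactness of $M$ would force $\Conf^0(M,[g])$ itself to be compact, and averaging any metric in $[g]$ over this compact group would produce an invariant metric in the conformal class, contradicting essentialness. Non-properness then yields a sequence $f_n \in \Conf^0(M,[g])$ that leaves every compact subset of $\Diff(M)$.

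The heart of the argument is the analysis of the dynamics of such an escaping sequence. Using compactness of $M$ together with the fact that conformal maps in dimension $n \geq 3$ are locally uniformly quasiconformal, after extraction one expects $(f_n)$ to exhibit \emph{north--south dynamics}: there exist points $x_+, x_- \in M$ such that $f_n \to x_+$ uniformly on compact subsets of $M \setminus \{x_-\}$, while $f_n^{-1} \to x_-$ uniformly on compact subsets of $M \setminus \{x_+\}$. The principal tool for establishing this dynamical picture is the \emph{conformal capacity} of pairs of disjoint continua (in the Loewner--Gehring--Ferrand sense), a conformal invariant whose asymptotic behavior detects collapse precisely and which rules out intermediate, ``partial'' collapsing scenarios.

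From the north--south dynamics, one builds a conformal diffeomorphism from $M \setminus \{x_-\}$ onto Euclidean space $\RR^n$. Concretely, fix a small conformally flat normal coordinate chart near $x_+$ and pull it back by $f_n^{-1}$ to produce larger and larger conformally flat charts exhausting $M \setminus \{x_-\}$, then extract a limit using normal family arguments for quasiconformal maps. The resulting conformal diffeomorphism $M \setminus \{x_-\} \to \RR^n$, combined with the standard one-point conformal compactification ${\bf S}^n = \RR^n \cup \{\infty\}$, yields the desired conformal diffeomorphism $(M,g) \cong {\bf S}^n$.

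The main obstacle is not the conceptual architecture but the technical construction of the limit conformal diffeomorphism to $\RR^n$: one must control the conformal distortion of the $f_n$ tightly enough to ensure that the limiting map is a genuine diffeomorphism, with no folds and no loss of injectivity, in the presence of unbounded conformal factors. Ferrand's entire theory of conformal capacities, together with compactness results for quasiconformal maps, is developed in her paper specifically to surmount this difficulty. The alternative approach of Obata bypasses the analysis by working with a smooth essential conformal vector field and deriving a second-order PDE that forces $(M,g)$ to have constant positive sectional curvature, but this strategy is less robust and does not generalize as readily beyond Riemannian signature.
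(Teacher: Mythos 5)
The paper does not prove Theorem \ref{thm.riem.lich}; it is cited as a classical result of Ferrand and Obata, with the paper going on to note only that Ferrand's argument is analytic (quasiconformal), Obata's is via differential geometry, and that the $n=2$ case reduces to the uniformization theorem. So there is no proof in the paper to compare against; I can only evaluate your sketch on its own terms.

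Your sketch of Ferrand's strategy is broadly accurate in its architecture, but as written it has two gaps. First, it silently restricts to $n \geq 3$: the quasiconformality of conformal maps, the Loewner--Gehring--Ferrand capacity machinery, and the normal-family arguments you invoke are all $n \geq 3$ tools, whereas the theorem is stated for $n \geq 2$. In dimension $2$ one cannot run this argument (every diffeomorphism of a surface is locally conformal for \emph{some} metric, and capacity degenerates); as the paper itself remarks, that case is handled instead by the uniformization theorem. Second, the step ``after extraction one expects $(f_n)$ to exhibit north--south dynamics'' is not a minor technicality but the analytic core of Ferrand's proof, and asserting it as an expectation leaves the argument circular: establishing that collapse must be concentrated at a single pair of points, rather than along a larger limit set or through a ``partial'' collapse, is precisely what the capacity estimates are built to deliver and occupies the bulk of her paper. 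A correct write-up would need to state the relevant capacity lemma (roughly: for an escaping sequence in $\Conf(M)$, the capacity of any fixed pair of disjoint continua whose images stay uniformly separated must blow up, which is impossible unless one of the continua shrinks to a point) and derive the pointwise convergence from it, rather than posit the dynamical picture and then invoke the capacities only retroactively as ``the principal tool.''
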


The first attempt to characterize the round sphere 
 by this property of its conformal group seems to have been by A. Lichnerowicz \cite{lich.cr} 
around 1964. 
Ferrand actually proved a stronger result for all $n \geq 2$, that the
above conclusion
holds whenever $\mbox{Conf}(M,[g])$ is essential. (In dimension 2, the theorem is a
straightforward consequence of the 
uniformization theorem for Riemann surfaces.)  She later proved a statement for noncompact $M$ in
\cite{lf.noncompact}. The reader will find in \cite{ferrand.histoire}
a nice account by her of the subject.

Obata's proof is based on techniques from
differential geometry and transformation groups, while Ferrand's is
based on quasiconformal analysis.  Two more, totally different proofs,
also covering the noncompact case, 
were given in 1995 by Schoen \cite{schoen.cr}, based on geometric
PDEs, in particular, scalar curvature theory, and in 2007 by the first
author \cite{frances.lfrank1}, using Cartan connections and
dynamical techniques.

The question whether there is a higher-signature analogue of theorem
\ref{thm.riem.lich} has been around for about thirty years (see
\cite[Sec 6.7]{dag.rgs}).  Note that essentiality
of a conformal action on a compact Riemannian manifold is equivalent to noncompactness of
the group.  In higher-signature, there
is a wide, largely uncharted array of compact pseudo-Riemannian
manifolds with noncompact isometry group.  Accordingly, there is not
such a simple characterization of which conformal groups can act
essentially.

Even compact Lorentzian manifolds with essential conformal group occur
in a wide variety of global geometries.  The first author found infinitely-many topological types of compact manifolds, for each
$n \geq 3$, supporting infinitely-many nonequivalent Lorentzian conformal structures admitting an
essential conformal flow, in \cite{frances.nogloballich}.  Locally,
however, they are all conformally equivalent to Minkowski space---that
is, all known essential Lorentzian examples are \emph{conformally
  flat}.  The conjecture is:

\begin{quotation}{\bf Lorentzian Lichnerowicz Conjecture (LLC)}.  \emph{Let
  $(M^n,g)$ be a compact Lorentzian manifold with $n \geq 3$.  If
  $\mbox{Conf}(M,[g])$ is essential, then $(M,g)$ is conformally flat.}
  \end{quotation}

For pseudo-Riemannian metrics of type $(p,q)$ with $p,q \geq 2$, there
are rather simple, polynomial deformations $g$ of the flat,
$(p,q)$-Minksowski metric such that a compact quotient of
$(\RR^n,g) \backslash \{ 0 \}$, diffeomorphic to $S^1 \times
S^{n-1} $ for $n = p+q$, is not conformally flat and admits an essential flow \cite{frances.pq.counterexs}.  Thus it seems there is no
version of the Lichnerowicz Conjecture true in signature
higher than Lorentzian.

In this article, we prove the Lichnerowicz Conjecture for
3-dimensional, real-analytic Lorentzian manifolds:
  
\begin{theoreme}
 \label{thm.main.theorem}
 Let $(M,g)$ be a $3$-dimensional, compact, real analytic, Lorentzian manifold. If $\mbox{Conf}^{\;0}(M,[g])$ is essential, then 
  $(M,g)$ is conformally flat.
\end{theoreme}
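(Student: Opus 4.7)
The plan is to work in the conformal Cartan geometry $(B \to M, \omega)$ of type $(\OO(2,3), P)$ canonically associated to $[g]$, whose homogeneous model is the Einstein universe $\Ein^{1,2}$; here $P$ is the stabilizer of an isotropic line in $\RR^{2,3}$. Conformal flatness of $(M,g)$ is equivalent to the vanishing of the Cartan curvature $\kappa$ of $\omega$, which is a real-analytic, $P$-equivariant tensor on $B$ satisfying $\kappa(\hat x \cdot p) = \Ad(p^{-1}) \kappa(\hat x)$. Because $M$ is real-analytic and connected, it is enough to prove that $\kappa$ vanishes on some nonempty open subset of $M$.

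First I would translate the essentiality of $\Conf^0(M,[g])$ into dynamics on $B$. Essentiality prevents the group from being equicontinuous for any metric in the conformal class; combined with the compactness of $M$, this produces at some point $x_0 \in M$ a \emph{holonomy sequence at $x_0$}: points $\hat x_k \to \hat x_0$ in $B$, conformal transformations $f_k \in \Conf^0(M,[g])$, and elements $p_k \in P$ with $p_k \to \infty$ such that the Cartan lifts satisfy $\tilde f_k(\hat x_k) = \hat x_0 \cdot p_k^{-1}$. Up to conjugation in $P$, the asymptotic type of $p_k$ is controlled by the Levi decomposition $P = (\RR^\times \times \OO(1,2)) \ltimes N$, with $N$ the abelian unipotent radical isomorphic to $\RR^{1,2}$.

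Combining $\omega$-preservation, $\kappa(\tilde f_k \hat x_k) = \kappa(\hat x_k)$, with the $P$-equivariance of $\kappa$, one then concludes that $\kappa(\hat x_0)$ lies in the linear subspace $W(p_k) \subset \lieg \otimes \Lambda^2(\lieg/\liep)^*$ on which the sequence $\Ad(p_k)$ remains bounded. A case-by-case analysis of the divergent conjugacy classes in $P$ shows that whenever $p_k$ has a nontrivial hyperbolic component, $W(p_k)$ is trivial, so $\kappa(\hat x_0) = 0$. To propagate this pointwise vanishing to an open set I would invoke the theory of local Killing fields for real-analytic Cartan geometries: such fields extend along curves, and on an open dense integrability locus $\mreg$ the local Killing algebra has locally constant isomorphism type. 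Combining the existence of a flat point in the closure of the $\Conf^0$-orbit of $x_0$ with local homogeneity near that point, one produces an open subset of $M$ on which $\kappa$ vanishes, and real-analyticity forces $\kappa \equiv 0$ on all of $M$.

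The main obstacle is the purely parabolic case, when every holonomy sequence at every essential point stays bounded modulo $N$ and diverges only inside the unipotent radical. Then $\Ad(p_k)$ is polynomial, $W(p_k)$ is large, and many components of $\kappa$ remain a priori admissible. Ruling these out should require a finer analysis: tracking higher jets of $\kappa$ along the orbit, classifying the purely unipotent bounded-orbit subspaces of $\lieg \otimes \Lambda^2(\lieg/\liep)^*$ in the small dimension at hand, and using the real-analytic stratification by local Killing algebras to show that, after restriction to a minimal $\Conf^0$-invariant closed set, the geometry becomes locally homogeneous. At that point the classification of $3$-dimensional locally homogeneous Lorentzian conformal structures admitting a nontrivial essential conformal vector field should leave only the conformally flat ones, completing the proof.
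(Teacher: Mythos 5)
Your setup is sound: the conformal Cartan geometry modeled on $\Ein^{1,2}$, holonomy sequences, and the use of analyticity to propagate vanishing of the Cotton/Cartan curvature are all central to the paper's argument. But the proposal contains several genuine gaps, and you explicitly defer the crux of the problem. First, your claim that $W(p_k)$ is trivial ``whenever $p_k$ has a nontrivial hyperbolic component'' is too coarse: the paper distinguishes contracting, balanced, mixed, and bounded-distortion holonomy, and only the first two kill the curvature module; mixed and bounded-distortion sequences leave nontrivial approximately-stable subspaces and are precisely what drive the most technical section of the paper (the case of a $2$-dimensional centralizer algebra). Second, getting from a single flat point to an open flat set is not a consequence of ``local homogeneity near that point'' --- $(M,[g])$ is typically not locally homogeneous near $x_0$. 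The paper obtains openness via propagation of stable holonomy along exponential curves of the Cartan connection (a delicate statement requiring the holonomy sequence to be linear and contracting or balanced), not via a homogeneity argument. Third, and most importantly, you acknowledge but do not resolve the purely parabolic case: the statement that ``classification of $3$-dimensional locally homogeneous Lorentzian conformal structures admitting a nontrivial essential conformal vector field should leave only the conformally flat ones'' presupposes a reduction to local homogeneity that is not available and does not hold in general here.

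Structurally the paper proceeds quite differently from your sketch. It first proves the existence of an \emph{essential conformal vector field} $X$ (this is nontrivial and uses Zeghib's classification of unbounded isometric flows on compact Lorentz $3$-manifolds), and then organizes the entire proof around the algebra $\liezx$ of local conformal vector fields commuting with $X$, as an enhanced Cartan geometry. Gromov's Frobenius/stratification machinery bounds $\dim \liezx \leq 4$, and the proof is a genuine four-way case analysis by $\dim \liezx$: dimension $4$ (local conformal homogeneity, which you anticipate but which is only one of four branches), dimension $3$ (three Lie-algebra subcases, with explicit coordinate computations and a developing-map/completeness argument for a $(G,\mathbf{X})$-structure on an invariant surface), dimension $2$ (the hardest case: integrating $\liezx$ to a cylinder action, showing all orbits in a dense open set are degenerate, constructing a reduction of the Cartan bundle, and a countability argument on how $2$-dimensional orbits accumulate on $1$-dimensional ones), and dimension $1$ (fixed points, hyperbolic vs.\ unipotent isotropy, a dedicated theorem about $P^+$-holonomy). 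None of this appears in your proposal, and without it the purely parabolic obstacle you flag remains exactly that --- an obstacle, not a resolved case.
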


  \subsection{Previous work on the conjecture}

  It is nearly understood which connected Lie groups can act conformally and essentially on
  compact Lorentzian manifolds.  The expectation
  is that any such group admits a local monomorphism into
  $\mbox{O}(2,n)$.

  Let $(M,g)$ be a compact, pseudo-Riemannian manifold
of type $(p,q)$, $p+q \geq 3$, and assume $p \leq q$.  Let $H \leq \mbox{Conf}(M,[g])$ be a
connected subgroup.

For $H$ semisimple, Zimmer proved that $\mbox{rk}_{\RR} H \leq p+1$
\cite{zimmer.rankbounds}; moreover, if this rank is attained, then $H$
necessarily acts essentially.
Bader--Nevo proved that if $H$ is simple and attains the maximal $\RR$-rank, then it is
locally isomorphic to $\mbox{O}(p+1,k+1)$, for $p \leq k \leq q$
\cite{bn.simpleconf}.  Under the same assumptions, the first author
and Zeghib subsequently proved that
$M$ is
conformally flat, and in fact conformally equivalent to
a certain compact, conformally homogeneous model space, up
to covering spaces \cite{fz.simpleconf}.  For $g$ Lorentzian,
Pecastaing has shown that if $H$ is noncompact, simple, and essential,
then $(M,g)$ is conformally flat \cite{pecastaing.smooth.sl2r}.

For $H$ nilpotent, the authors proved in \cite{fm.nilpconf} that the
nilpotence degree of $H$ is at most $2p+1$, and that, when this
maximal degree is attained, $(M,g)$ is conformally flat and again
equivalent to the homogeneous model, up to covering spaces.  Moreover,
if $H$ has the maximal nilpotence degree, it necessarily acts essentially.

A recent result of the second author and Pecastaing \cite{mp.confdambra}, supporting the LLC,
does not assume any structure on the group, as above, but rather topological properties of the space.  The theorem states that the conformal group of a
compact, simply connected, analytic Lorentzian manifold is compact.
The proof shows that noncompactness of $H$ implies conformal
flatness.  By D'Ambra's Theorem \cite{dambra.lorisom}, $H$ noncompact
is equivalent to $H$ essential for such spaces.  Conformal
flatness leads to a contradiction of the simple connectedness
assumption.  The proof reduces to the case that the group is abelian.

\subsection{Compact three-dimensional Lorentzian manifolds}

One of our motivations for theorem \ref{thm.main.theorem} was the 
thorough understanding of isometries of compact, 3-dimensional
Lorentzian manifolds.  In \cite{zeghib.lorentz.3d}, Zeghib classified
all such spaces admitting an unbounded isometric flow.  The first
author recently improved this classification to all such spaces
admitting any noncompact isometry group---including in particular the
case where the isometry group is infinite and discrete \cite{frances.lorentz.3d}.

There are moreover many useful classifications of 
homogeneous models for 3-dimensional Lorentzian manifolds (eg,
\cite{calvaruso.kowalski.ricci.oper,coley.hervik.pelavas.3d,sekigawa.curv.homog.3d,rahmani.lorentz.heis}).
In \cite{dz.lor3d.lochom}, Dumitrescu and Zeghib classified all
metrically homogeneous Lorentzian spaces ${\bf X}$ such that there is
a compact, 3-dimensional Lorentzian manifold locally modeled on ${\bf
  X}$, and they proved that these are all complete.

\subsection{Overview of proof}

The proof rests on the approach we have developed to conformal
Lorentzian transformations in our previous papers, which in turn is
based on techniques involving the Cartan connection associated
to a conformal structure and on Gromov's results on automorphisms of rigid geometric
structures.  We moreover draw
on some of the work specific to 3-dimensional Lorentzian manifolds referenced
above as well as the recent advance in \cite{mp.confdambra}, from
which we draw two major parts of our proof.

In section \ref{sec.existence}, we use Zeghib's classification of
unbounded 3-dimensional Lorentzian flows \cite{zeghib.lorentz.3d} to
show that $(M,g)$ has an essential conformal vector field.
Denoting such a vector field by $X$, we ultimately focus on the Lie algebra
$\liezx$ of local conformal vector fields commuting with $X$.

In section \ref{sec.curvature.vanishing} we gather local results yielding conformal flatness,
based on our previous work and Gromov's theory, applied in this
3-dimensional, analytic context.  These are used throughout the
paper, and they immediately imply that the dimension of $\liezx$ is at most 4.

The remainder of the paper comprises four more or less distinct
proofs, for each of the cases, $\mbox{dim } \liezx$ equals 4, 3, 2, or 1.
The case $\mbox{dim } \liezx =4$ corresponds to $(M,[g])$ being
locally conformally homogenous, which quickly leads to the conclusion
that it is conformally flat, or $X$ is inessential, a contradiction.

When $\mbox{dim } \liezx = 3$, it can be $\RR^3$,
$\mathfrak{heis}(3)$, or $\mathfrak{aff}(\RR) \oplus \RR$.
In the case of $\mathfrak{heis}(3)$, we explicitly find a coordinate
chart exhibiting $g$ as conformally flat.  For $\mathfrak{aff}(\RR)
\oplus \RR$, we find a complete $(G,{\bf X})$-structure on a closed,
invariant surface, and use this to show that the flow along $X$ on this surface
gives rise to conformal flatness.

When $\mbox{dim } \liezx = 2$, it is isomorphic to $\RR^2$.  We show
in section \ref{sec.global-action} that $\liezx$ globalizes and integrates to a
cylinder action on $M$.  Then the situation strongly resembles that of
\cite[Sec 6]{mp.confdambra}; in the remainder of section \ref{sec.2dimensional}, we follow the
outline of that proof to reach the desired conclusion.

Finally, when $\mbox{dim }\liezx =1$, we use fixed points of the
flow along $X$,
guaranteed by Gromov's theory, to alternately reach a contradiction or
conclude conformal flatness.  The proof in this case follows section 5
of \cite{mp.confdambra}.

\section{Existence of an essential vector field}
\label{sec.existence}

The objective of this section is to prove that, under the hypothesis that
$\mbox{Conf}^{0}(M,[g])$ is essential, there exists an
{\it essential conformal vector field}, namely, a vector field 
generating an essential conformal flow.  We denote by
$\mathcal{X}^{conf}(M)$ the space of all conformal vector fields on $M$.  Our proof will be specific to
3 dimensions, but will not require analyticity.

\begin{proposition}
  \label{prop.existence.essential.flow}
Let $(M,g)$ be a compact, smooth, 3-dimensional Lorentzian manifold.  If
$\mbox{Conf}^{\,0}(M,[g])$ is essential, then it contains an essential  1-parameter subgroup.
 In fact, every 1-parameter subgroup which is not relatively compact
 is essential. 
  \end{proposition}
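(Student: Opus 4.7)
The plan is to prove the second, stronger assertion, from which the first follows. First, any compact subgroup $K \subseteq \Conf(M,[g])$ is inessential by a standard averaging argument. Indeed, let $\mu_k \in C^\infty(M)$ denote the cocycle defined by $k^*g = e^{2\mu_k}g$; the cocycle identity $\mu_{k_1k_2}(x) = \mu_{k_1}(k_2 x) + \mu_{k_2}(x)$ combined with right-invariance of Haar measure gives $\mu_{k_0}(x) = \lambda(x) - \lambda(k_0 x)$ for $\lambda := \int_K \mu_k\,dk$, so $e^{2\lambda}g$ is a $K$-invariant metric in $[g]$. Consequently, if $\Conf^0(M,[g])$ is essential it must be non-compact, and any non-compact connected Lie group contains a non-relatively-compact 1-parameter subgroup, so the second assertion supplies the first.

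For the second assertion I argue by contraposition: if $\{\phi_t\} \subseteq \Conf^0(M,[g])$ is inessential, I claim it is relatively compact. Suppose $\phi_t^*g' = g'$ for some $g' \in [g]$. Because $\operatorname{Isom}(M,g')$ is a closed subgroup of $\Conf(M,[g])$, relative compactness of $\{\phi_t\}$ in the two groups is equivalent; so if $\{\phi_t\}$ is not relatively compact in $\Conf^0(M,[g])$, then $\{\phi_t\}$ is an unbounded isometric flow on the compact Lorentzian 3-manifold $(M,g')$. Zeghib's classification \cite{zeghib.lorentz.3d} then applies and identifies $(M,g')$, up to finite covers, with one of a short list of locally homogeneous models.

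For each such model the plan is to exhibit a metric in $[g]$ preserved by the whole group $\Conf^0(M,[g])$, contradicting essentiality. The strategy is to identify in each case a conformally-natural structure on $(M,g')$---a parallel null line field, a Ricci eigen-distribution, or a bi-invariant bilinear form on the universal cover---that singles out $g'$ in $[g]$ up to a positive multiplicative constant. Any conformal transformation then sends $g'$ to a constant multiple of itself; but on the compact manifold $M$ such a homothety is automatically an isometry of $g'$, since the change-of-variables identity $\int_M f^*\,d\vol_{g'} = \int_M d\vol_{g'}$ forces the scale factor to be $1$. Hence $\Conf^0(M,[g]) \subseteq \operatorname{Isom}(M,g')$, contradicting essentiality. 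The main obstacle is this case-by-case verification: for each model on Zeghib's list one must locate the appropriate conformal invariant and argue that it rigidifies $g'$ within its conformal class. The restriction to three dimensions enters essentially through the shortness and rigidity of Zeghib's list, and no real-analyticity is used.
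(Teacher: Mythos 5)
Your overall architecture matches the paper's: assume a non--relatively-compact $1$-parameter subgroup is inessential, invoke Zeghib's classification of unbounded isometric flows on compact Lorentz $3$-manifolds, and then derive that every element of $\mbox{Conf}^{\,0}(M,[g])$ is a homothety of $g'$, hence (by the volume argument on a compact manifold) an isometry, contradicting essentiality. The reduction from the first assertion to the second via the averaging lemma is also the same as the paper's implicit use of ``compact $\Rightarrow$ inessential.'' All of that is fine.

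However, the substance of the proof --- showing that every conformal transformation is a homothety --- is exactly the part you explicitly leave open, calling it ``the main obstacle.'' For the flat complete case, your sketch would work essentially as the paper does: conformal maps lift to similarities of Minkowski. But for Zeghib's second case ($M \cong G/\Gamma_\rho$ with $G$ a finite cover of $\mathrm{PSL}_2(\RR)$, metric $G \times \{h^t\}$-invariant), the proposed strategy of ``identifying a conformally-natural structure that singles out $g'$ up to scale'' is not shown to work, and there is reason to doubt it as stated. The space of $\mbox{Ad}(\{h^t\})$-invariant inner products on $\mathfrak{sl}_2(\RR)$ is more than one-dimensional, so a ``bi-invariant bilinear form'' does not canonically recover $g'$; and more fundamentally, even if a conformal invariant pinned down $g'$ up to scale, one would still have to show that a general element $h \in \mbox{Conf}^{\,0}(M)$ preserves that invariant, which is not automatic since $h$ need not normalize the $G$-action. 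The paper handles this case by an entirely different, and genuinely necessary, argument: the kernel $\Lambda$ of $\rho$ is non-solvable, hence Zariski-dense in $G$; the algebraic representation of $G$ on conformal Killing fields must therefore act trivially on the subspace of lifted fields; this forces $G$ to centralize any lifted conformal transformation $\tilde{h}$, and the commutation relation is what makes $\tilde{h}$ a homothety. That centralizing step is the missing idea in your proposal, and without it the case $(2)$ branch is a genuine gap.
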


\begin{preuve}
If $\mbox{Conf}^{\,0}(M,[g])$ is essential, then it is necessarily
noncompact.
Let
$\{ \varphi^t_X \} < \mbox{Conf}^{\,0}(M,[g])$ be an unbounded
1-parameter group.  Suppose, for a contradiction, that 
$\{ \varphi^t_X \}$ is inessential, that is, contained in
$\mbox{Isom}^0(M,g')$ for some $g' \in [g]$.  In this case, Zeghib's
classification of noncompact Lorentz-isometric flows on compact
3-dimensional manifolds \cite[Thm. 2]{zeghib.lorentz.3d} gives two
possibilities for $(M,g')$:

\begin{enumerate}
 \item $(M,g')$ is flat and complete---that is, a compact quotient of Minkowski space.
 \item $M \cong G/ \Gamma_{\rho}$ for $G$ a finite
   cover of $\mbox{PSL}_2(\RR)$, and $\Gamma_{\rho}$
  the image of a uniform lattice
$\Gamma < G$ under a homomorphism $\mbox{Id}_\Gamma
\times \rho$ into $G\times G $; the image of
$\rho$ is in a 1-parameter hyperbolic or unipotent subgroup $\{
h^t \} < G$. The metric $g'$
lifts to a $G \times \{ h^t \}$-invariant metric on $G$.
 \end{enumerate}

It is a general fact that the conformal group of a flat, complete, Lorentzian manifold $(M,g')$ is inessential.
 Indeed, any $f \in \mbox{Conf}(M)$, can be lifted to a conformal transformation of Minkowski space $\RR^{1,2}$, 
 namely, an element of $\mbox{Sim}(\RR^{1,2}) \cong (\RR^* \times \mbox{SO}(1,2))  \ltimes
\RR^3$. It follows that $f$ is a \emph{homothetic} transformation,
one for which the conformal distortion is a constant
$\lambda$. But a homothety on a compact
 manifold is necessarily an isometry (consider the formula $\int_M
 \mbox{dvol}_{g'}=\int_M \mbox{dvol}_{f^*g'}$).
 
To simplify the argument for case (2), we initially assume that $G=\mbox{SL}_2(\RR)$.
Denote by $\Lambda$ the kernel 
of $\rho: \Gamma \to \{ h^t \}$, and by $\Lambda^{Z}$ the Zariski closure in $G$. 
 Note that $\Lambda$ is not solvable, because $\Gamma$, which is commensurable 
to a surface group, is not solvable. It follows that
$\Lambda^{Z}=G$. Denote by $\tilde{M}$ the cover of $M$ diffeomorphic
to $G$.  Because $G$ acts isometrically on the left on $\tilde{M}$, it acts
linearly on the finite-dimensional vector space $V$ of 
global conformal Killing fields of $\tilde{M}$. This representation is
given by an algebraic homomorphism $\alpha: \mbox{SL}_2(\RR) \to \mbox{GL(V)}$.  
Denote by $\tilde{\mathcal{X}}^{conf}(\tilde{M})$ the subspace of $V$
comprising lifts of vector fields in
$\mathcal{X}^{conf}(M)$. The restriction of $\alpha(\Lambda)$ to this
subspace is trivial, hence the same holds for $\Lambda^{Z}=G$. 

When $G$ is a quotient or a connected finite cover of $\mbox{SL}_2(\RR)$, the
previous arguments are easily adapted:
we lift or project $\Lambda$ to $\mbox{SL}_2(\RR)$, as appropriate. The representation of $G$ on $\tilde{\mathcal{X}}^{conf}(\tilde{M})$
 lifts to, or factors through, a representation $\alpha$ of
 $\mbox{SL}_2(\RR)$, for which the subgroup corresponding to $\Lambda$
 is trivial on $\tilde{\mathcal{X}}^{conf}(\tilde{M})$.
 The same holds for the Zariski closure, hence, this
 subspace is a trivial summand of $\alpha$.  We conclude that $G$
 centralizes $\tilde{\mathcal{X}}^{conf}(\tilde{M})$.
 
 Now $G$ commutes with all lifts of elements $h \in \mbox{Conf}^0(M)$.
Let $\tilde{h}$ be such 
a lift.
Choose a lift $\tilde{x}_0$ of $x_0$ to $\tilde{M}$, and let $\tilde{h}^* g'_{\tilde{x}_0} = \lambda g'_{\tilde{h}.\tilde{x}_0}$.  Given $\tilde{x} \in \tilde{M}$, let $f
\in G$ with $f.\tilde{x}_0 = \tilde{x}$.  As $f$ commutes with $\tilde{h}$,
$$ \tilde{h}^* g'_{\tilde{x}} = \tilde{h}^* f^* g'_{\tilde{x}_0} =
f^* \tilde{h}^*
g'_{\tilde{x}_0} = f^* \lambda g'_{\tilde{h}.\tilde{x}_0} = \lambda
g'_{f\tilde{h}.\tilde{x}_0} = \lambda g'_{\tilde{h}.\tilde{x}}$$

Thus $\tilde{h}$ is a homothety, and so is $h$. Because $M$ is
compact, $h$ must be an isometry.
We conclude that $\mbox{Conf}^{\,0}(M)$ is inessential.
\end{preuve}

\begin{remarque}
 We expect proposition \ref{prop.existence.essential.flow} to hold in
 any dimension, but this fact would obviously require a more general proof.
\end{remarque}

By proposition \ref{prop.existence.essential.flow}, under the assumptions of theorem
\ref{thm.main.theorem}, there is an essential conformal vector field
on $M$.  We fix such a vector field and call it $X$.  Because we
assume $(M,[g])$ to be real-analytic, so will $X$ be real-analytic.
In what 
 follows, we will work with the geometric structure defined by the
 pair $([g],X)$.

\section{Local and infinitesimal symmetries of $([g],X)$}
\label{sec.symmetries}

The conformal structure $[g]$ on $M$ determines a {\it rigid geometric structure of algebraic type}, in the sense of Gromov 
(see \cite{gromov.rgs}).
 It is also fruitful to consider the canonically associated Cartan geometry,
modeled on the 3-dimensional Lorentzian Einstein space
$\mbox{Ein}^{1,2}$. The latter space can be obtained as
$$\mbox{Ein}^{1,2} = (S^1
\times S^2/\langle \iota \rangle,[-d\theta^2 \oplus g_{S^2} ])$$
where
$\iota$ is the antipodal map on both factors.  It is a conformally
homogeneous space $\mbox{PO}(2,3)/P$, for $P$ the stabilizer of a
null line in $\RR^{2,3}$.   Denote $G = \mbox{PO}(2,3)$ with
  corresponding Lie algebra $\lieg$. The
Cartan geometry comprises (see \cite{sharpe} Ch V, \cite{cap.slovak.book.vol1} Sec 1.6):

\begin{itemize}
\item a principal $P$-bundle $\pi: \hat{M} \rightarrow M$; and 
\item a Cartan connection $\omega \in \Omega^1(\hat{M},\lieg)$ satisfying, for all $\hat{x} \in \hat{M}$,
\begin{enumerate}
 \item $\omega_{\hat{x}} : T_{\hat{x}} \hat{M} \rightarrow \lieg$ is a linear isomorphism
 \item $\omega_{\hat{x}.g} \circ R_{g*} = \mbox{Ad }g^{-1} \circ \omega_{\hat{x}} \ \ \forall g \in P$
 \item $\omega \left( \left. \frac{d}{dt} \right|_0 (\hat{x}.e^{tY}) \right) \equiv Y \ \  \forall \ Y \in \liep$
\end{enumerate}
\end{itemize}

The pair
$([g],X)$, with $X$ as in section \ref{sec.existence}, is also a rigid geometric structure of algebraic type. 
 It is not quite a Cartan geometry, but rather an {\it enhanced}
Cartan geometry, a notion which was studied in \cite[Sec
4.4.1]{pecastaing.frobenius}. It is proved there that the properties
of the local orbit structure which we will use 
are the same for enhanced Cartan geometries as for usual Cartan geometries.

\subsection{Local transformations and vector fields}

The conformal group of $M$ lifts to a group of automorphisms of the
Cartan bundle
$\hat{M}$ preserving $\omega$. Because $\omega$ gives a parallelization of
$\hat{M}$, the action of $\mbox{Conf}(M)$ on $\hat{M}$ is free and
proper (see, eg, \cite[Thm. 3.2]{kobayashi.transf}).

We will denote by $\mbox{Conf}^{loc}(M)$ the pseudogroup of local conformal
transformations of $M$.  As for global conformal transformations, any element of $\mbox{Conf}^{loc}(M)$ defined on some open subset 
$U \subset M$ lifts to an embedding of $\pi^{-1}(U)$
into $\hat{M}$ commuting with the principal $P$-action and preserving 
$\omega$.  We will work
below with the sub-pseudogroup $Z_X^{loc} \subset
\mbox{Conf}^{loc}(M)$ centralizing
$X$, where defined.

Let $\mbox{Is}^{loc}(x) \subset \mbox{Conf}^{loc}
 (M)$ be the stabilizer of a point $x \in M$; it is a group.  Any choice of $\hat{x} \in
\pi^{-1}(x)$ gives a monomorphism $\iota_{\hat{x}} :
\mbox{Is}^{loc}(x) \rightarrow P$, the \emph{isotropy monomorphism
  with respect to $\hat{x}$},
defined implicitly by
$$ h.\hat{x} =
\hat{x}.\iota_{\hat{x}}(h)$$
We will denote the image $\hat{I}_{\hat{x}}$.  A different choice $\hat{x}' = \hat{x}.p$
gives $\hat{I}_{\hat{x}'} = p^{-1} \hat{I}_{\hat{x}} p$.
  Denote $\mbox{Is}_X^{loc}(x)$ the subgroup of
$\mbox{Is}^{loc}(x)$ centralizing $X$.  For $\hat{x} \in \pi^{-1}(x)$, denote
$(\hat{I}_X)_{\hat{x}}$ the image of $\mbox{Is}^{loc}(x)$ under
$\iota_{\hat{x}}$.


A theorem of Amores \cite{amores.killing} says that on real-analytic
manifolds, germs of local
conformal vector fields can be uniquely extended along paths.  It follows that
the algebra of germs of local conformal vector fields defined around a
point $x \in M$ is independent of $x$, up to isomorphism.  Moreover,
local conformal vector fields on the universal cover of $M$ extend
 to global ones; note that these may not necessarily be complete.

 We will work below with the local conformal vector fields commuting
 with $X$.  Amores' theorem also implies that these form a
 well-defined subspace of the local conformal vector fields on $M$,
 which we will denote $\liezx$.

\subsection{Gromov's Frobenius theorem and isotropy}
\label{sec.gromov.frobenius}

Under the assumption that $M$ is compact and $C^\omega$, Gromov's
Frobenius theorem \cite{gromov.rgs}  ensures that, at each point $x$, a finite number of infinitesimal
conditions are sufficient for the production of local conformal
transformations at $x$. In the setting of analytic Cartan geometries, the
second author showed that the jets of the curvature $\kappa$ (see section 
\ref{sec.cartan-geometry-curvature} for the definition) provide this
sufficient condition \cite{me.frobenius}. The jet of order $i$  
can be captured by a $P$-equivariant map $D^{(i)} \kappa: \hat{M} \rightarrow
{\mathbb U}^{(i)}$, where ${\mathbb U}^{(i)}$ is a finite-dimensional vector space derived from the
curvature module, on which $P$ acts linearly. For the enhanced Cartan
structure $([g],X)$, there are a corresponding curvature $\kappa_X$
 and corresponding $P$-equivariant maps $D^{(i)} \kappa_X: \hat{M} \rightarrow
{\mathbb U}^{(i)}_X$ \cite[Sec 4.4.1]{pecastaing.frobenius}. In this setting,
the Frobenius theorem says:

\begin{theoreme} \cite[Prop 3.8]{me.frobenius}, \cite[1.6.C and 1.7.A]{gromov.rgs}
 \label{thm.frobenius}
 Let $(M,g)$ be a compact, real-analytic, pseudo-Riemannian manifold. Let $X$ be an analytic vector field on $M$. There exists an integer
  $i_0 \in \NN$ such that two points $x$ and $y$ are related by an
  element of $\mbox{Conf}^{loc}(M)$
 if and only if $D^{(i_0)}
  \kappa(\hx)=D^{(i_0)} \kappa(\hy)$ for some $\hx \in \pi^{-1}(x)$ and $\hy \in \pi^{-1}(y)$.
  The same holds for $Z_X^{loc}$ with $\kappa_X$ in place of $\kappa$.
\end{theoreme}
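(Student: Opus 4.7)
The plan is to derive the theorem from the general Frobenius principle for analytic Cartan geometries established by the second author in \cite{me.frobenius}, which itself refines Gromov's open-dense theorem for rigid analytic geometric structures \cite{gromov.rgs}. For the enhanced Cartan geometry defined by the pair $([g],X)$, I would invoke Pecastaing's extension of the same machinery in \cite[Sec 4.4.1]{pecastaing.frobenius}. The work then amounts to verifying that the present setting fits the hypotheses of those theorems and organizing the two assertions in parallel.

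First I would dispose of the easy implication: any $f \in \mbox{Conf}^{loc}(M)$ lifts to an $\omega$-preserving open embedding $\hat f$ of $\pi^{-1}(\mbox{dom}(f))$ into $\hat M$ commuting with the principal $P$-action. Naturality of the curvature and of covariant differentiation then yields $D^{(i)}\kappa(\hat f(\hat x)) = D^{(i)}\kappa(\hat x)$ for every $i$, and any element of $Z_X^{loc}$ additionally preserves $X$, hence also $\kappa_X$ and all its jets. For the converse, I would introduce the $P$-invariant constructible images $V_i := D^{(i)}\kappa(\hat M) \subset {\mathbb U}^{(i)}$ and apply a Noetherian-type argument to the tower $\cdots \to V_{i+1} \to V_i \to \cdots$, produced by the natural projections of the jet modules, to obtain an integer $i_0$ at which the fiber structure of successive projections stabilizes. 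Analyticity together with the parallelization of $\hat M$ by $\omega$ then allows one to integrate the pointwise equality $D^{(i_0)}\kappa(\hat x) = D^{(i_0)}\kappa(\hat y)$, which forces agreement of all higher-order jets, to a germ of $\omega$-preserving bundle isomorphism sending $\hat x$ to $\hat y$; this descends to the sought element of $\mbox{Conf}^{loc}(M)$. The enhanced assertion follows identically after replacing $\kappa, {\mathbb U}^{(i)}$ by $\kappa_X, {\mathbb U}^{(i)}_X$.

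The main obstacle is ensuring that a single stabilization order $i_0$ works uniformly over all of $M$. This is where compactness together with $P$-equivariance of the maps $D^{(i)}\kappa$ plays its essential role: equivariance guarantees that the stabilization order is constant along $P$-orbits, hence descends to a function on $M$, and compactness, via an analytic stratification of the resulting constructible sets, provides a uniform bound on that function. The only separate point in passing to the enhanced geometry is that $\kappa_X$ continues to take values in a finite-dimensional $P$-module with algebraic action; this is precisely what Pecastaing formalizes in \cite[Sec 4.4.1]{pecastaing.frobenius}, so no additional verification is required beyond quoting his setup.
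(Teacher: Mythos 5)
The paper does not prove theorem \ref{thm.frobenius}; it is stated purely as a citation to \cite{me.frobenius}, \cite{gromov.rgs}, and, for the enhanced Cartan geometry, \cite[Sec 4.4.1]{pecastaing.frobenius}, with no argument of its own supplied. Your sketch faithfully reproduces the strategy of those references---naturality for the easy direction, stabilization of the jet tower via compactness, analyticity, and $P$-equivariance, then integration along the $\omega$-parallelization to a germ of bundle isomorphism---so your proposal follows essentially the same route the paper takes, namely deferring to the cited machinery.
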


From theorem \ref{thm.frobenius}, the group $\hat{I}_{\hat{x}}$ coincides with  
 the stabilizer of $D^{(i_0)} \kappa(\hx)$ for the representation of
 $P$ on ${\mathbb U}^{(i_0)}$; similarly, $(\hat{I}_X)_{\hat{x}}$ is
 the stabilizer of $D^{(i_0)} \kappa_X(\hx)$.
This leads to the following result on the structure of isotropy groups
(which in fact holds more generally, for
real-analytic rigid geometric
structures of algebraic type).

\begin{theoreme}[Gromov \cite{gromov.rgs} 3.4.A ]
  \label{thm.isotropy.algebraic}
Let $(M,g)$ be a real-analytic pseudo-Riemannian manifold, and let $X$ be a real-analytic vector field.
  With respect to any $\hat{x} \in \pi^{-1}(x)$, the isotropy images
$\hat{I}_{\hat{x}}$ and $(\hat{I}_X)_{\hat{x}}$
  are real-algebraic subgroups of $P$; in particular,
  $\mbox{Is}^{loc}(x)$ and $\mbox{Is}_X^{loc}(x)$ have finitely-many
  components.
\end{theoreme}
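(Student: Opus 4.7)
The plan is to deduce algebraicity of the isotropy groups from the elementary fact that the stabilizer of a vector under an algebraic representation of an algebraic group is an algebraic subgroup. The bridge is Theorem \ref{thm.frobenius}, which identifies $\hat{I}_{\hat{x}}$ with $\Stab_P(D^{(i_0)}\kappa(\hat{x}))$ inside $\mathbb{U}^{(i_0)}$, and likewise $(\hat{I}_X)_{\hat{x}}$ with $\Stab_P(D^{(i_0)}\kappa_X(\hat{x}))$ inside $\mathbb{U}^{(i_0)}_X$. Once this identification is in hand, the theorem becomes a statement purely about the algebraic group $P$ and its linear action on a finite-dimensional vector space.

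First I would record that $P$ is itself real-algebraic, being the $G$-stabilizer of a null line in $\RR^{2,3}$ (a Zariski-closed condition), inside the real-algebraic group $G=\mbox{PO}(2,3)$. Next I would verify that the $P$-representations on $\mathbb{U}^{(i_0)}$ and $\mathbb{U}^{(i_0)}_X$ are algebraic. The curvature module embeds in $\Hom(\Lambda^2(\lieg/\liep),\lieg)$, on which $P$ acts by multilinear operations derived from $\Ad$, hence algebraically. Inductively, $\mathbb{U}^{(i)}$ is built from $\mathbb{U}^{(i-1)}$ by adjoining the jet data corresponding to one further covariant fiber derivative of $\kappa$; the relevant $P$-equivariance is governed by tensor operations in $\Ad(P)$ and by the inclusion $\liep\hookrightarrow\lieg$, all of which are polynomial in $P$. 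The enhanced version $\mathbb{U}^{(i)}_X$ adjoins a summand carrying the jets of $X$ along the fiber, on which $P$ acts through its natural linear action on $\lieg/\liep$ and higher tensor powers — again algebraically. Consequently the homomorphisms $P\to \GL(\mathbb{U}^{(i_0)})$ and $P\to\GL(\mathbb{U}^{(i_0)}_X)$ are morphisms of real-algebraic groups.

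With these two ingredients, the stabilizer in $P$ of any fixed element of either module is cut out by polynomial conditions on $P$, and hence is a real-algebraic subgroup. The final assertion on finitely-many components follows from the classical fact (Whitney) that a real-algebraic variety has only finitely many connected components, together with the observation that $\iota_{\hat{x}}$ is a group isomorphism from $\mbox{Is}^{loc}(x)$ onto $\hat{I}_{\hat{x}}$, and similarly for the $X$-centralizing version. I expect the main obstacle to be precisely the verification that the iterative construction of the jet modules produces an algebraic $P$-representation at every stage, including the enhanced summand for $X$; once the constructions from \cite[Prop 3.8]{me.frobenius} and \cite[Sec 4.4.1]{pecastaing.frobenius} are unpacked to confirm this polynomial character, the theorem is immediate.
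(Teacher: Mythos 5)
Your proposal is correct and follows essentially the same route that the paper indicates in the paragraph immediately preceding the theorem statement: identify $\hat{I}_{\hat{x}}$ with $\Stab_P(D^{(i_0)}\kappa(\hat{x}))$ via Theorem~\ref{thm.frobenius}, observe that $P$ is real-algebraic and acts algebraically on $\mathbb{U}^{(i_0)}$, and conclude that the stabilizer is a real-algebraic subgroup, hence has finitely many components. You simply make explicit the verification that the jet modules carry algebraic $P$-actions, which the paper treats as implicit in the Frobenius machinery.
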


\subsection{Local orbit stratification}
\label{sec.stratification}

In this section, we will
focus on the structure of the $\mbox{Conf}^{loc}(M)$-orbits in $M$. By the $\mbox{Conf}^{loc}(M)$-orbit of a point $x
\in M$, we mean all points that can be reached from $x$ by applying a finite
sequence of local conformal maps; it will be denoted
$\mathcal{O}(x)$.
The $Z_X^{loc}$-orbit
 of the point $x$ is defined analogously, and denoted $\mathcal{O}_X(x)$.
 
Here are consequences of Gromov's stratification theorem which will be
used below (see also \cite[Thm 4.1]{me.frobenius}, \cite[Thm
4.19]{pecastaing.frobenius}). This theorem stems from the
 Frobenius theorem \ref{thm.frobenius} and properties of orbits for algebraic actions.

  \begin{theoreme}[Gromov \cite{gromov.rgs} 3.1.A, 3.2] Let $(M,g)$ be a compact
    real-analytic pseudo-Riemannian manifold.
    \label{thm.gromov.stratification}
    \begin{enumerate}
    \item For all $x \in M$, the orbit $\mathcal{O}(x)$ is a semi-analytic subset of $M$. It is locally
      closed and has finitely-many components.  The same holds for $\mathcal{O}_X(x)$.
      \item For all $x \in M$, the closure
        $\overline{\mathcal{O}(x)}$ is locally connected and contains a closed
      $\mbox{Conf}^{loc}(M)$-orbit.  The
      analogous properties hold for $\overline{\mathcal{O}_X(x)}$.
        \end{enumerate}
\end{theoreme}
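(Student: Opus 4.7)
The plan is to transfer the problem from $M$ to the finite-dimensional $P$-module $\mathbb{U}^{(i_0)}$ via the equivariant map $D^{(i_0)}\kappa: \hat{M} \to \mathbb{U}^{(i_0)}$, and exploit the algebraicity of the $P$-action there. By theorem \ref{thm.frobenius} together with the $P$-equivariance of $D^{(i_0)}\kappa$, for any $x \in M$ and $\hat{x} \in \pi^{-1}(x)$ one has
\[ \pi^{-1}(\mathcal{O}(x)) = (D^{(i_0)}\kappa)^{-1}\bigl(P \cdot D^{(i_0)}\kappa(\hat{x})\bigr), \]
and an analogous identity holds for $\mathcal{O}_X(x)$ with $\kappa_X$ and $\mathbb{U}^{(i_0)}_X$ in place of $\kappa$ and $\mathbb{U}^{(i_0)}$. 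Everything reduces to understanding $P$-orbits in a finite-dimensional real algebraic representation and pushing the result through $D^{(i_0)}\kappa$ and $\pi$.

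For part (1), I first invoke classical real algebraic geometry: since $P$ is a real algebraic group acting algebraically on $\mathbb{U}^{(i_0)}$, every $P$-orbit is a semi-algebraic subset which is locally closed in its Zariski closure and has only finitely many connected components. I then pull this structure back via the analytic map $D^{(i_0)}\kappa$ and push it forward through the proper analytic bundle projection $\pi$. Preimages of semi-algebraic sets under real-analytic maps are semi-analytic, and images under proper analytic bundle maps remain semi-analytic; properness of $\pi$ also transfers local closedness. Finiteness of connected components then follows from compactness of $M$ together with the standard fact that semi-analytic sets in a compact real-analytic manifold admit a finite stratification into real-analytic submanifolds.

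For part (2), local connectedness of $\overline{\mathcal{O}(x)}$ is a general feature of closures of semi-analytic sets, a consequence of \L{}ojasiewicz's triangulation theorem. To produce a closed orbit inside $\overline{\mathcal{O}(x)}$, I use a minimum-dimension argument: let $y_* \in \overline{\mathcal{O}(x)}$ be a point at which the upper-semicontinuous function $y \mapsto \dim \mathcal{O}(y)$ attains its minimum on $\overline{\mathcal{O}(x)}$. Local closedness of $\mathcal{O}(y_*)$ implies that $\overline{\mathcal{O}(y_*)} \setminus \mathcal{O}(y_*)$ is a semi-analytic subset of strictly smaller dimension; any orbit meeting it would then have dimension strictly less than $\mathcal{O}(y_*)$, violating minimality. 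Hence this difference is empty and $\mathcal{O}(y_*)$ is a closed orbit contained in $\overline{\mathcal{O}(x)}$.

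Both cases go through verbatim for $Z_X^{loc}$ by substituting $D^{(i_0)}\kappa_X$ and $\mathbb{U}^{(i_0)}_X$, following the enhanced Cartan geometry framework of \cite[Sec 4.4.1]{pecastaing.frobenius}. The main technical obstacle is to verify cleanly that the bridge between ``semi-algebraic on $\mathbb{U}^{(i_0)}$'' and ``semi-analytic on $M$'' faithfully preserves local closedness, local connectedness, and the finite-component count along the two-step composition of $D^{(i_0)}\kappa^{-1}$ and $\pi$; once this is done, the remaining ingredients are classical results from real algebraic and real analytic geometry.
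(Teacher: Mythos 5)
Your overall strategy---transfer the problem to $\mathbb{U}^{(i_0)}$ via $D^{(i_0)}\kappa$, exploit algebraicity of the $P$-orbit there, pull back, and descend to $M$---is precisely what the paper has in mind when it remarks that the theorem ``stems from the Frobenius theorem \ref{thm.frobenius} and properties of orbits for algebraic actions.'' The identity $\pi^{-1}(\mathcal{O}(x)) = (D^{(i_0)}\kappa)^{-1}(P \cdot D^{(i_0)}\kappa(\hat{x}))$ is correct, and the minimum-dimension argument for producing a closed orbit in part (2) is sound.

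The descent from $\hat{M}$ to $M$, however, is not justified the way you state it. The projection $\pi: \hat{M} \to M$ is \emph{not} proper: the structure group $P$ is a parabolic subgroup of $\mbox{PO}(2,3)$, hence noncompact, so the fibers of $\pi$ are noncompact. Your appeals to properness to transfer semi-analyticity and local closedness therefore do not go through as written. What actually carries the argument is that $\pi^{-1}(\mathcal{O}(x))$ is $P$-invariant, so one can replace $\pi$ by local sections of the bundle: choosing a local section $\sigma$ over an open $U \subseteq M$, one has $\mathcal{O}(x)\cap U = (D^{(i_0)}\kappa\circ\sigma)^{-1}\bigl(P\cdot D^{(i_0)}\kappa(\hat{x})\bigr)$, the preimage of a semi-algebraic set under an analytic map, hence semi-analytic; local closedness is inherited the same way since the section slices the $P$-saturated locally closed set upstairs. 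You should replace the properness claim with this local-section argument. A smaller slip: $y\mapsto\dim\mathcal{O}(y)$ is \emph{lower}-semicontinuous, not upper, since it is the stabilizer dimension that jumps up in the limit; luckily lower-semicontinuity is the direction that guarantees attainment of a minimum on the compact set $\overline{\mathcal{O}(x)}$, so the rest of your part (2) is unaffected.
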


Recall that a subset $S$ of a topological space is \emph{locally
  closed} if $S$ is open in the closure $\overline{S}$.  A set is 
\emph{semianalytic} if it is locally cut out by
finitely many analytic equalities and inequalities; see \cite{bierstone.milman} for
properties of these sets.  The closure of a semianalytic set is again
semianalytic.
Local connectedness of semianalytic sets can be found in \cite[Cor 2.7]{bierstone.milman}.

In a slight abuse of language, we will call the \emph{$\liezx$-orbit}
of a point $x$ the set of points reachable from $x$ by flowing along
finitely many local vector fields in $\liezx$.
 A consequence of the proof
 of theorem \ref{thm.gromov.stratification} is that the $\liezx$-orbit of $x$ is the connected component
  of $\mathcal{O}_X(x)$ containing $x$.  We will implicitly make this
  identification several times below.

\subsection{Recurrence produces isotropy}
\label{sec.recurrence}

In \cite{frances.open.dense} and \cite{frances.lorentz.3d}, the first
author combined the Frobenius theorem \ref{thm.frobenius} with Poincar\'e
recurrence to produce nontrivial local isotropy for isometric actions. 
Recall that a \emph{recurrent} point for an unbounded subgroup $H < \mbox{Conf}(M,[g])$ is $x \in M$ with
$h_k.x \to x$ for some unbounded sequence $\{ h_k \} \subset H$.

\begin{proposition}(compare \cite[Prop. 5.1]{frances.open.dense},
  \cite[Prop. 3.3]{frances.lorentz.3d})
  \label{prop.recurrence}
  Let $(M,g)$ be a compact, real-analytic
  pseudo-Riemannian manifold, and let $\{ \varphi_X^t \}$ be a
  noncompact conformal flow.  At each recurrent 
  point $x$ for $\{ \varphi_X^t \}$, the local isotropy $\mbox{Is}_X^{loc}(x)$
has noncompact identity component.
  \end{proposition}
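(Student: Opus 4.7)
The plan is to lift the recurrence to the Cartan bundle $\pi : \hat{M} \to M$, extract an unbounded sequence $\{p_k\} \subset P$ which asymptotically fixes the Frobenius jet $v := D^{(i_0)}\kappa_X(\hat{x})$, and then read off noncompactness of the isotropy via the algebraicity statement of Theorem \ref{thm.isotropy.algebraic}.

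First, I would fix $\hat{x} \in \pi^{-1}(x)$ and, invoking recurrence, choose an unbounded sequence $\{t_k\}$ with $\varphi_X^{t_k}(x) \to x$. Using a local section $\sigma$ of $\pi$ with $\sigma(x) = \hat{x}$, put $\hat{y}_k := \sigma(\varphi_X^{t_k}(x)) \to \hat{x}$ and let $p_k \in P$ be determined by $\hat{y}_k = \varphi_X^{t_k}(\hat{x}).p_k$, so that $\varphi_X^{t_k}(\hat{x}).p_k \to \hat{x}$. The key claim at this stage is that $\{p_k\}$ is unbounded in $P$. This is where the hypothesis of a noncompact flow is essential: preservation of $\omega$ makes the $\mbox{Conf}(M)$-action on $\hat{M}$ free and proper, and since $\{\varphi_X^{t_k}\}$ is unbounded in $\mbox{Conf}(M)$, properness forbids $\{\varphi_X^{t_k}(\hat{x})\}$ from having any convergent subsequence. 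A subconvergent $\{p_k\}$ would contradict this via $\varphi_X^{t_k}(\hat{x}) = \hat{y}_k . p_k^{-1}$.

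Next, $P$-equivariance of $D^{(i_0)}\kappa_X$ together with its invariance under the lifted flow yields
\[ D^{(i_0)}\kappa_X\bigl(\varphi_X^{t_k}(\hat{x}).p_k\bigr) \;=\; p_k^{-1} \cdot D^{(i_0)}\kappa_X(\varphi_X^{t_k}(\hat{x})) \;=\; p_k^{-1} \cdot v, \]
so continuity of $D^{(i_0)}\kappa_X$ gives $p_k^{-1} \cdot v \to v$ in $\mathbb{U}^{(i_0)}_X$. By Theorem \ref{thm.frobenius} the $P$-stabilizer of $v$ coincides with $(\hat{I}_X)_{\hat{x}}$, which by Theorem \ref{thm.isotropy.algebraic} is a real algebraic subgroup of $P$. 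The orbit $P.v$ is then locally closed and the orbit map $P/(\hat{I}_X)_{\hat{x}} \to P.v$ is a homeomorphism, so $p_k^{-1} \cdot v \to v$ translates into $p_k^{-1}(\hat{I}_X)_{\hat{x}} \to (\hat{I}_X)_{\hat{x}}$ in the quotient; a local section lets me write $p_k^{-1} = q_k s_k$ with $q_k \to e$ in $P$ and $s_k \in (\hat{I}_X)_{\hat{x}}$, so the unboundedness of $\{p_k\}$ forces $\{s_k\}$ unbounded in $(\hat{I}_X)_{\hat{x}}$. Thus $(\hat{I}_X)_{\hat{x}}$ is noncompact; being a real algebraic group it has finitely many components, so its identity component is noncompact, and the conclusion for $\mbox{Is}_X^{loc}(x)$ follows through $\iota_{\hat{x}}$. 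The only delicate point is the unboundedness of $\{p_k\}$---all subsequent steps transcribe the Gromov/Frobenius machinery to the centralizer setting $(\kappa_X, \mathbb{U}^{(i_0)}_X)$ in place of the usual $(\kappa, \mathbb{U}^{(i_0)})$.
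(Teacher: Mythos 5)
Your proposal follows the same strategy as the paper's proof: extract an unbounded holonomy sequence $\{p_k\}$ from recurrence via properness of the $\mbox{Conf}(M)$-action on $\hat M$, use $P$-equivariance and flow-invariance of $D^{(i_0)}\kappa_X$ to see that $\{p_k\}$ asymptotically stabilizes the jet, and then exploit local closedness of algebraic orbits to deposit the unboundedness inside the (algebraic, hence finitely-many-components) stabilizer $(\hat I_X)_{\hat x}$. The only differences are cosmetic---you spell out the local section and the decomposition $p_k^{-1}=q_ks_k$ where the paper simply writes $p_k.D^{(i_0)}\kappa_X(\hat x)=\epsilon_k.D^{(i_0)}\kappa_X(\hat x)$ with $\epsilon_k\to e$---so the argument is essentially identical and correct.
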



\begin{preuve}
 Let $x \in M$ be  recurrent for $\{ \varphi_X^t \}$, and consider $\hx \in \pi^{-1}(x)$. 
  There are $t_k \rightarrow \infty$ and $\{ p_k \}
  \subset P$
  such that $\varphi_X^{t_k}.\hx.p_k^{-1} \to \hx$.
    Because $\mbox{Conf}(M)$
    acts properly on $\hat{M}$, the sequence $\{ p_k \}$ necessarily
    tends to infinity.  Let $D^{i_0}\kappa_X: \hat{M} \to {\mathbb U}^{(i_0)}$ be the
     $P$-equivariant  map given by theorem \ref{thm.frobenius}.  This
     map is also $\mbox{Conf}(M)$-invariant, so that
     $p_k.D^{(i_0)}\kappa_X(\hat{x}) \rightarrow D^{(i_0)}
\kappa_X(\hat{x})$.  Now $P$ acts algebraically on ${\mathbb
  U}^{(i_0)}$ with locally closed orbits, which implies 
$p_k.D^{(i_0)}\kappa_X(\hat{x})= \epsilon_k.D^{(i_0)}\kappa_X(\hat{x})
$ for some  $\epsilon_k$ tending to the identity in $P$.
 This implies existence of a
noncompact stabilizer of $D^{(i_0)} \kappa_X(\hat{x})$, coinciding with 
$(\hat{I}_X)_x$, again by theorem \ref{thm.frobenius}. Because this stabilizer
is moreover algebraic in $P$,  it has noncompact identity component.
\end{preuve}

\section{Conformal curvature and vanishing conditions}
\label{sec.curvature.vanishing}
In this section, we gather several sufficient conditions for conformal
flatness, which will be applied throughout our proof.  Although some
definitions and results later in this section will be valid in higher
dimensions, we assume for now that $(M,g)$ is a 3-dimensional, smooth
Lorentzian manifold.

\subsection{Cotton-York tensor}
  \label{sec.cotton.tensor}
  
  Recall that in dimension 3,
  the Weyl curvature vanishes, and the obstruction
  to conformal flatness is the Cotton-York tensor  $C \in \Gamma(\wedge^2
  T^*M \otimes T^*M)$ (see \cite[II.28]{eisenhart.riem.geom}):
  $$ C_x(u,v,w) = (\nabla_w P)(u,v) - (\nabla_v P)(u,w)$$
for $u,v,w \in T_xM$, where $P$ is the Schouten tensor
$$ P_x(u,v) = \mbox{Ric}_x(u,v) - \frac{1}{4} \mbox{Sc}(x) g_x(u,v)$$
and $\mbox{Ric}$ and $\mbox{Sc}$ denote the Ricci and scalar
curvatures of $g$, respectively.  The Cotton-York tensor is
conformally invariant, meaning it is independent of a choice of metric
in the conformal class $[g]$.  For $f \in \mbox{Conf}(M)$, in
particular,
$$ C_{f(x)} (f_{*x}u,f_{*x}v,f_{*x}w) = C_x (u,v,w) \qquad \forall x \in M, \ u,v,w \in T_xM$$
This tensor moreover satisfies the Bianchi identity, 
meaning it is in the kernel of the map to $\wedge^3 T^*M$, and is
totally trace-free.


The Cotton-York tensor is a section of the vector bundle associated to the following module $\mathbb{U}$.
Choose
$$\mathbb{I} =
\left( \begin{array}{ccc}
         & & 1\\
         & 1 &  \\
         1 &  &
       \end{array}
     \right)
     $$
     and write $\mbox{SO}(1,2)$ for $\mbox{SO}(\mathbb{I})$.
Write $E_1, E_2, E_3$ for the standard basis of $\RR^3$, and take $\xi_i =
E_i^t \mathbb{I}$ as basis for $\RR^{3*}$, $i = 1, 2,3$.  Denote
$$\mathbb{U} = (\wedge^2 \RR^{3*} \otimes \RR^{3*})^C$$
the 5-dimensional $\mbox{SO}(1,2)$-module of trace-free tensors satisfying the Bianchi
symmetry.  It is the sum of five 1-dimensional weight spaces, for
weights $w = -2, -1, 0, 1,2$.  The positive weight spaces are
$$ \mathbb{U}^{+2} = \RR(\xi^1 \wedge \xi^2 \otimes \xi^1) \qquad \mathbb{U}^{+1} = \RR(\xi^1 \wedge \xi^3 \otimes \xi^1 - \xi^1 \wedge \xi^2 \otimes \xi^2) $$ 
Denote $\mathbb{U}^+ = \mathbb{U}^{+1} + \mathbb{U}^{+2}$.  Denote $G_0 = \RR^* \times \mbox{SO}(1,2) = \mbox{CO}(1,2)$.  Letting $d \in \RR^*$
act by the scalar $d^{-3}$ extends the representation on $\mathbb{U}$
to $G_0$.



\subsection{Cartan curvature}
\label{sec.cartan-geometry-curvature}

Let $(M,\hat{M},\omega)$
be the canonical Cartan geometry modeled on $\mbox{Ein}^{1,2}$
associated to $(M,[g])$, from section \ref{sec.symmetries}.
Recall the notation $G = \mbox{PO}(2,3)$.

%


The \emph{Cartan curvature}
$$\Omega(X,Y) = d\omega(X,Y) + [\omega(X),\omega(Y)]$$
is a semi-basic 2-form on $\hat{M}$.  It is the obstruction to
$(M,\hat{M},\omega)$ being locally isomorphic to the model Cartan
geometry, which in our case is
$(\mbox{Ein}^{1,2},G,\omega_G)$, where $\omega_G$ is the Maurer-Cartan
form of $G$.
The values of $\Omega$ lie in the nilpotent radical
$\liep^+$ of $\liep$, which is identified as a $G_0$-representation
with $\RR^{1,2*}$.

Via the Cartan
connection $\omega$, the curvature $\Omega$ can be identified with a
$P$-equivariant function
$$ \kappa : \hat{M} \rightarrow \wedge^2 (\lieg / \liep)^* \otimes
\liep^+$$
where the $P$-representation on the target is built from the adjoint
representation of $G$ restricted to $P$.  It factors through $P/P^+
\cong G_0$.  As $G_0$-modules,
$$\wedge^2 (\lieg / \liep)^* \otimes
\liep^+ \cong \wedge^2 \RR^{1,2*} \otimes \RR^{1,2*}$$
The Cartan connection identifies $T^*M$ with $\hat{M} \times_P
\liep^+$.  In this way, $\kappa$ corresponds to the Cotton
tensor, read in the conformal frames given by $\hat{M}/P^+$ (see
\cite[Cor 1.6.8]{cap.slovak.book.vol1}).  In
particular, $\kappa$ factors through a $G_0$-equivariant map to $\mathbb{U}$.

\subsection{Holonomy sequences}

Because $M$ is compact, while local conformal transformations act
freely and properly on $\hat{M}$, we can associate to an unbounded
sequence in $\mbox{Conf}^{loc}(M)$ an unbounded sequence in the
principal group $P$.

\begin{definition}
  Let $\hat{x} \in \hat{M}$ and $\{f_k \} \subset
  \mbox{Conf}^{loc}(M)$, with $f_k$ defined on a neighborhood of $x=
  \pi(\hat{x})$ for all $k$.  A
\emph{holonomy sequence for $\{ f_k \}$ at $x$ with respect to $\hat{x}_k \rightarrow
  \hat{x}$},
is $\{ p_k \} \subset P$ such that
$$ f_k.\hat{x}_k.p_k^{-1} \rightarrow \hat{y} \qquad \mbox{for some } 
\hat{y} \in \hat{M}$$
A \emph{pointwise holonomy sequence} is $\{ p_k \}$ as above for which
$\hat{x}_k \in \pi^{-1}(x) \ \forall k$.
\end{definition}

Note that given a holonomy sequence as above, $f_k.x_k \rightarrow y$, where $x_k =
\pi(\hat{x}_k), y = \pi(\hat{y})$.  Given any sequence $\{ f_k \}
\subset \mbox{Conf}^{loc}(M)$ and any $x_k \rightarrow x \in M$, we may assume, after passing to a
subsequence, that $f_k.x_k $ converges in $M$, since $M$ is compact.  Then
there are holonomy sequences, including pointwise holonomy sequences, for $\{f_k \}$ at $x$.

A conformal transformations is, in dimension at least 3, determined by its $2$-jet at 
 a point (see, eg, \cite[Sec IV.6]{kobayashi.transf}).  
A holonomy sequence captures the $2$-jets of $\{f_k \}$
 along the sequence $\{ x_k \}$ and thus turns out to be a useful tool to understand the local behavior of $\{f_k\}$ around $\{x_k \}$.

Since the $2$-jet of a map can be read in different $2$-frames, holonomy sequences are far from unique.  Some of the choices involved
in their construction are accounted for by vertical equivalence: two holonomy sequences $\{ p_k \}$,
$\{ q_k \}$ are \emph{vertically equivalent} if $q_k = a_k p_k b_k$ for $\{
a_k \}, \{ b_k \} \subset P$ bounded.

\subsubsection{Taxonomy of holonomy sequences}

The reductive group $G_0 \cong \mbox{CO}(1,2)$ has a $KAK$
decomposition, in which the $\RR$-split Cartan subgroup $A$ is
two-dimensional.  Under the embedding of $G_0$ in
$G=\mbox{PO}(2,3)$ as the stabilizer of a pair of nonorthogonal isotropic
lines in $\RR^{2,3}$, the torus $A$ equals the $\RR$-split Cartan subgroup of
$G$.  Up to vertical equivalence, a holonomy sequence
in $P$ can be written $p_k = d_k \tau_k$, with
$d_k \in A$ and $\tau_k \in P^+$.

Denote by $\liea$ the subalgebra corresponding to $A$.  The
standard choice of simple roots spanning $\liea^*$ comprises a long
root, $\gamma$, and a short root, $\beta$.  The latter vanishes on the
$\RR$-factor in the decomposition $\lieg_0 = \RR \oplus
\mathfrak{o}(1,2)$, and it can be thought of as the generator of the root
space of $\mathfrak{o}(1,2)$.  We take $\alpha = \gamma - \beta$; it
corresponds to the negative log conformal dilation in the
standard representation on $\RR^{1,2}$.  Explicitly, for the quadratic
form $2 x_0x_4 + 2 x_1 x_3 + x_2^2$ on $\RR^{2,3}$,

$$ 
\liea =
\left \{
\begin{pmatrix}
a & & & & \\
  & b & & & \\
  & &  0 & & \\
  & & & -b & \\
  & & & & -a
\end{pmatrix}
,\ a,b \in \RR 
\right \}
$$

with $\alpha$ and $\beta$ dual to the $a$ and $b$ parameters, respectively.

The symmetry $(\alpha,\beta) \mapsto (\alpha, - \beta)$ can be
realized by conjugation in $P$.  Denote $A'$ the semigroup comprising
all $d \in A$ with $\beta(\ln d) \leq 0$.  Up to vertical equivalence,
a holonomy sequence in $P$ may be assumed to be in $A'P^+$.
\begin{definition}
\label{def:holonomy_trichotomy}
  Let $\{ d_k \tau_k \}$ be an unbounded sequence in $A'P^+$, and let $D_k = \ln d_k$.  The sequence is said to be
\begin{itemize}
\item of \emph{bounded distortion} if $\alpha(D_k)$ is bounded while
  $\beta(D_k) \rightarrow - \infty$
\item \emph{contracting}
  if $(\alpha + \beta)(
  D_k) \rightarrow \infty$.
\item \emph{balanced} if $\alpha(D_k) + \beta(D_k)$ is bounded, but each term is unbounded.
\item \emph{mixed} if $\beta(D_k) \rightarrow - \infty$ and
  $\alpha(D_k) \rightarrow \infty$, while $(\alpha + \beta)(D_k)
  \rightarrow - \infty.$ 
\end{itemize}
It is called \emph{linear} if $\tau_k \equiv 1$.

  \end{definition}

  \subsubsection{Stability and propagation of holonomy}
  \label{subsec.stability.propagation}
 The following definition is inspired by \cite{zeghib.tgl1} (see also \cite[Sec
7.4]{dag.rgs}, \cite[Def 2.10]{cap.me.parabolictrans} for a
non-approximate version, and \cite[Sec 4.4]{frances.degenerescence}
for a related notion of stability and stable foliations).

\begin{definition}
Let $\mathbb{V}$ be a $P$-module, and let $\{ p_k \}$ be a sequence in $P$.  The \emph{approximately stable set} for $\{ p_k \}$ in $\mathbb{V}$ is
$$ \mathbb{V}^{AS}(p_k) = \{ v = \lim v_k \in \mathbb{V} \ : \ p_k.v_k \ \mbox{is bounded} \} $$
\end{definition}

The following proposition is a version for sequences of \cite[Prop
2.9]{cap.me.parabolictrans} (see \cite[Prop 3.13]{mp.confdambra} for the one-line
proof):

\begin{proposition}
\label{prop:as_for_inv_sections}
Given a $P$-module $\mathbb{V}$, represent a continuous, $\{ f_k \}$-invariant section of the associated bundle $\hat{M} \times_P \mathbb{V}$ by a continuous, $P$-equivariant, $\{ f_k \}$-invariant map $\sigma : \hat{M} \rightarrow \mathbb{V}$.  Given any holonomy sequence $\{ p_k \}$ for $\{ f_k \}$ with respect to $\hat{x}_k \rightarrow \hat{x}$, the value $\sigma(\hat{x}) \in \mathbb{V}^{AS}(p_k)$.
\end{proposition}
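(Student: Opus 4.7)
The plan is to unpack the three defining properties of the object $\sigma$, \emph{continuity}, \emph{$P$-equivariance}, and \emph{$\{f_k\}$-invariance}, and match them one-by-one against the three ingredients in the definitions of holonomy sequence and approximately stable set. The witness sequence for $\sigma(\hat{x}) \in \mathbb{V}^{AS}(p_k)$ will be the obvious one, namely $v_k := \sigma(\hat{x}_k)$.

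First I would note that continuity of $\sigma$ combined with $\hat{x}_k \to \hat{x}$ immediately gives the approximation half of the definition: $v_k = \sigma(\hat{x}_k) \to \sigma(\hat{x})$. It then remains to check that $p_k \cdot v_k$ is bounded in $\mathbb{V}$. For this I would use $P$-equivariance to rewrite $p_k \cdot \sigma(\hat{x}_k) = \sigma(\hat{x}_k \cdot p_k^{-1})$, up to the sign convention on the $P$-action which the paper fixes implicitly by saying $\sigma$ represents a section of $\hat{M} \times_P \mathbb{V}$.

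Next I would invoke $\{f_k\}$-invariance of $\sigma$, which by definition means $\sigma \circ f_k = \sigma$ on the domain of $f_k$ in $\hat{M}$. Applied at the point $\hat{x}_k \cdot p_k^{-1}$, this gives
\[
  p_k \cdot \sigma(\hat{x}_k) \;=\; \sigma(\hat{x}_k \cdot p_k^{-1}) \;=\; \sigma\bigl(f_k \cdot \hat{x}_k \cdot p_k^{-1}\bigr).
\]
Finally, the defining property of a holonomy sequence is precisely that $f_k \cdot \hat{x}_k \cdot p_k^{-1} \to \hat{y}$ for some $\hat{y} \in \hat{M}$, so by continuity of $\sigma$ the right-hand side converges to $\sigma(\hat{y})$. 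In particular the sequence $p_k \cdot \sigma(\hat{x}_k)$ is bounded, which is exactly what was needed to conclude $\sigma(\hat{x}) \in \mathbb{V}^{AS}(p_k)$.

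There is essentially no obstacle: the statement is a formal consequence of the three structural properties of $\sigma$, and the only mild subtlety is bookkeeping around the convention for how $P$ acts on $\mathbb{V}$ versus on the fiber of the associated bundle. This is why the authors signal the proof as one line.
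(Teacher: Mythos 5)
Your proof is correct and is precisely the ``one-line proof'' the paper refers to from \cite[Prop 3.13]{mp.confdambra}: take $v_k = \sigma(\hat{x}_k)$, use continuity for the approximation, and chain $P$-equivariance, $\{f_k\}$-invariance, and the holonomy condition to get boundedness of $p_k \cdot v_k$. Your note on the sign convention is also the right one to flag: for a section of $\hat{M}\times_P \mathbb{V}$ the equivariance is $\sigma(\hat{z}\cdot p) = p^{-1}\cdot\sigma(\hat{z})$, which is exactly what makes $p_k\cdot\sigma(\hat{x}_k)=\sigma(\hat{x}_k\cdot p_k^{-1})$.
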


  An unbounded sequence $\{ d_k \tau_k \}$ in $A'P^+$ is called \emph{stable} if it is linear 
   and contracting or balanced.  In general, holonomy sequences for a
   given $\{f_k \} \subset \mbox{Conf}^{loc}(M)$ 
    can be of different types at different nearby points; however, stable
    sequences enjoy the property of local {\it propagation of holonomy}.
    
  The exponential map of the Cartan connection is the vehicle for
  propagation of holonomy.
   Any $X \in \lieg$ defines a vector field $\hat{X}$ on $\hat{M}$ by
$\omega(\hat{X}) \equiv X$. Denote the flow along $\hat{X}$ by $\{ \varphi^t_{\hat{X}} \}$. 
The \emph{exponential map} at $\hat{x} \in \hat{M}$ is 
$$ \exp_{\hat{x}}(X) = \varphi^1_{\hat{X}}(\hat{x}) \in \hat{M}$$
for $X$ in a sufficiently small neighborhood of $0$ in $\lieg$.
%
%
   A holonomy sequence for $\{ f_k \}$ at $x$ turns out to be also valid along certain
exponential curves from $x$ (see \cite[Prop 6.3]{frances.locdyn} or \cite[Prop 3.9]{mp.confdambra}):

\begin{proposition}
\label{prop:propagation_holonomy}
 Let $\{p_k \}$ be a holonomy sequence for $f_k$ at $x$, 
 with respect to $\hat{x}_k \in \pi^{-1}(x_k)$.  
 Suppose given $Y_k  \rightarrow Y \in \lieg \backslash \liep$ for which $\Ad p_k(Y_k)$ converges.  
 Then, provided $Y$ is in the domain of $\exp_{\hat{x}}$, $\{ p_k \}$ is also a holonomy sequence for $\{ f_k \}$ at $x' = \pi \circ \exp(\hat{x},Y)$ with respect to $\hat{x}_k' = \exp(\hat{x}_k,Y_k)$.  
\end{proposition}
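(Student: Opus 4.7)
The plan is to derive the identity
\[ f_k \cdot \hat{x}_k' \cdot p_k^{-1} = \exp(f_k \cdot \hat{x}_k \cdot p_k^{-1}, \Ad(p_k) Y_k) \]
from the equivariance properties of the Cartan connection $\omega$, and then pass to the limit using continuity of the exponential map of $\omega$.

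Two facts produce the identity. First, since each $f_k \in \Conf^{loc}(M)$ lifts to a local automorphism preserving $\omega$, it preserves every constant vector field $\hat{Y}$ defined by $\omega(\hat{Y}) \equiv Y$, and therefore commutes with its flow; in particular $f_k \circ \exp_{\hat{z}}(Y) = \exp_{f_k \cdot \hat{z}}(Y)$ wherever both sides are defined. Second, the defining equivariance $\omega_{\hat{z} \cdot g} \circ R_{g*} = \Ad(g^{-1}) \circ \omega_{\hat{z}}$ translates to $R_{g*} \hat{Y} = \widehat{\Ad(g^{-1}) Y}$ along the $R_g$-orbit, hence $\exp_{\hat{z}}(Y) \cdot g = \exp_{\hat{z} \cdot g}(\Ad(g^{-1}) Y)$. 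Applying these in succession with $\hat{z} = \hat{x}_k$, $Y = Y_k$, and $g = p_k^{-1}$ yields the displayed formula.

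Now pass to the limit. By hypothesis, the base point $f_k \cdot \hat{x}_k \cdot p_k^{-1}$ converges to $\hat{y}$ and $\Ad(p_k) Y_k$ converges to some $Z \in \lieg$. Since the exponential map is smooth on its open domain in $\hat{M} \times \lieg$, if $(\hat{y}, Z)$ lies in that domain then $f_k \cdot \hat{x}_k' \cdot p_k^{-1} \to \exp(\hat{y}, Z) =: \hat{y}'$, exhibiting $\{ p_k \}$ as a holonomy sequence at $x' = \pi(\hat{y}')$ with respect to $\hat{x}_k'$.

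The main technical point is therefore to verify that $(\hat{y}, Z)$ lies in the domain of $\exp$. For this, consider the family of curves $\gamma_k(t) = \exp(f_k \cdot \hat{x}_k \cdot p_k^{-1}, t \Ad(p_k) Y_k)$ on $[0,1]$. By the same commutation argument applied with $tY_k$ in place of $Y_k$, each $\gamma_k$ is defined on all of $[0,1]$ for $k$ large, using openness of $\operatorname{Dom}(\exp)$ and the fact that $\exp(\hat{x}, sY)$ exists for $s \in [0,1]$ whenever $(\hat{x}, Y) \in \operatorname{Dom}(\exp)$. Each $\gamma_k$ is an integral curve of $\widehat{\Ad(p_k) Y_k}$, and these fields converge to $\hat{Z}$. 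An open-and-closed argument on the set $S \subset [0,1]$ of times at which $\gamma_k(t)$ converges in $\hat{M}$, using the flow relation $\varphi^s_{\hat{Z}} \circ \varphi^t_{\hat{Z}} = \varphi^{s+t}_{\hat{Z}}$ to propagate convergence locally near any $t_0 \in S$ and continuous dependence of ODE solutions on parameters to show $S$ is closed, gives $S = [0,1]$. The limit curve $t \mapsto \exp(\hat{y}, tZ)$ is therefore defined throughout $[0,1]$, and evaluation at $t = 1$ yields the proposition.
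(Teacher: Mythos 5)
The paper does not supply its own proof of this proposition; it is cited from \cite{frances.locdyn} and \cite{mp.confdambra}. Your identity
\[
 f_k \cdot \hat{x}_k' \cdot p_k^{-1} = \exp\bigl(f_k \cdot \hat{x}_k \cdot p_k^{-1},\ \Ad(p_k) Y_k\bigr)
\]
is correctly derived from the two equivariance properties of $\omega$, and this is indeed the heart of the matter; the rest is a passage to the limit. You also correctly identify that the only nontrivial point is that $(\hat{y},Z)$ should lie in the domain of $\exp$.

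It is precisely there that the proof has a genuine gap. The open-and-closed argument you sketch does not establish closedness of $S$. Openness of $S$ is fine: at $t_0\in S$ with $\gamma_k(t_0)\to \hat z$, joint continuity of $\exp$ and the group law give convergence of $\gamma_k(t_0\pm s)$ for small $s$. But closedness is exactly the assertion you are trying to prove. If $[0,t_*)\subset S$ and $\gamma(t)=\exp(\hat y,tZ)$ blows up as $t\to t_*^-$ (which the escape lemma cannot rule out, since $\hat M$ is noncompact in the fiber direction even though $M$ is compact), then the size of the neighborhood provided by the openness argument at $t_0$ shrinks to $0$ as $t_0\to t_*^-$, and nothing forces $\gamma_k(t_*)$ to converge. ``Continuous dependence of ODE solutions on parameters'' goes in the wrong direction here: the maximal interval of existence is lower semicontinuous in the data, not upper semicontinuous, so the fact that the approximating curves $\gamma_k$ are defined on $[0,1]$ does not prevent the limiting integral curve from having a strictly shorter interval of existence. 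In fact, showing $t_*\in S$ is \emph{equivalent} to showing $\gamma$ does not escape to infinity before $t_*$, so the argument as written is circular. To close the gap one must use more than the formal setup---for instance a compactness or boundedness argument in the $P$-fiber exploiting that $p_k$ is a holonomy sequence---to show that $\{\gamma_k(t_*)\}$ stays in a compact subset of $\hat M$; once a cluster point $\hat z$ of $\gamma_k(t_*)$ exists, your openness argument at $t_*$, together with uniqueness of limits on $[0,t_*)$, does yield $t_*\in S$. Without some such input, the last paragraph does not prove what it claims.
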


 One derives  easily from proposition \ref{prop:propagation_holonomy} the following corollary.
 
 \begin{corollaire}\cite[Lem 4.3, 4.6]{frances.degenerescence}
  \label{coro.stable}
  If $\{f_k \}$ has a
 stable holonomy sequence $\{p_k \}$ at $x$, then $\{ p_k \}$ is also
 a holonomy sequence for $\{ f_k \}$ on a neighborhood of $x$. 
 \end{corollaire}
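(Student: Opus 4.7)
The plan is to deduce the statement from Proposition \ref{prop:propagation_holonomy}. Given a stable holonomy sequence $\{p_k\}$ at $x$, I aim to verify, for each $y$ in a suitable neighborhood of $x$, the hypothesis needed to propagate $\{p_k\}$ to $y$. I fix the opposite nilradical $\liep^-$ of $\liep$, so that $\lieg = \liep^- \oplus \liep$, and parameterize a neighborhood of $x$ as $y = \pi \circ \exp(\hat{x}, Y)$ for $Y$ small in $\liep^-$. The task reduces to producing, for each such $Y$, a sequence $Y_k \to Y$ along which $\Ad p_k(Y_k)$ converges.

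Next I use stability. Linearity gives $p_k = d_k \in A'$, and $\Ad d_k$ acts diagonally on $\liep^-$ in its root-space decomposition, with eigenvalues $e^{w(D_k)}$, where $w$ ranges over the three weights of $A$ on $\liep^-$. Using the constraint $\beta(D_k) \leq 0$ built into $A'$, I check: in the contracting case, every $w(D_k) \to -\infty$, so $\Ad d_k|_{\liep^-} \to 0$; in the balanced case, exactly one weight remains bounded while the other two tend to $-\infty$. Either way, $\Ad d_k|_{\liep^-}$ is a bounded sequence of endomorphisms of the finite-dimensional space $\liep^-$.

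Passing to a subsequence of $\{f_k\}$, I may assume $\Ad d_k|_{\liep^-}$ converges to a linear map $L \colon \liep^- \to \liep^-$. Then for every $Y$ small in $\liep^-$, the constant sequence $Y_k = Y$ gives $\Ad p_k(Y_k) \to L(Y)$, which is convergent. Proposition \ref{prop:propagation_holonomy} then yields $\{p_k\}$ as a holonomy sequence for $\{f_k\}$ at $y = \pi \circ \exp(\hat{x}, Y)$ with respect to $\hat{y}_k = \exp(\hat{x}_k, Y)$. Finite-dimensionality of $\liep^-$ makes the subsequence extraction uniform in $Y$: a single diagonal extraction serves the whole neighborhood at once.

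The step requiring the most care is the balanced case, where one eigenvalue of $\Ad d_k|_{\liep^-}$ stays bounded away from both $0$ and $\infty$; being only bounded rather than a priori convergent, it forces the subsequence extraction. Checking that no eigenvalue escapes to $+\infty$—which is exactly the content of the \emph{stable} hypothesis—is the key weight computation, and it also explains why the mixed and bounded-distortion cases must be excluded from the statement.
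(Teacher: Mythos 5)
Your proof is correct and carries out exactly the derivation the paper indicates (the paper itself only cites \cite{frances.degenerescence} and asserts the corollary follows easily from Proposition \ref{prop:propagation_holonomy}). Your weight computation is right: the weights of $\liea$ on $\lieg_{-1}$ are $-\alpha$, $-\alpha\pm\beta$, and the constraint $\beta(D_k)\le 0$ together with the contracting or balanced hypothesis forces all of them to remain bounded above, so $\Ad d_k|_{\lieg_{-1}}$ is bounded, while the mixed and bounded-distortion cases send $-\alpha-\beta$ to $+\infty$. One small caveat worth stating explicitly: in the balanced case the extraction of a convergent subsequence of $\Ad d_k|_{\lieg_{-1}}$ is genuinely needed, so strictly speaking the conclusion holds for a subsequence of $\{p_k\}$ (and the corresponding $\{f_k\}$); this is the usual convention for holonomy sequences and is all that the downstream application in Proposition \ref{prop.stable.flat} requires.
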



\subsection{Stability implies conformal flatness}
\label{sec.stability-flatness}
We record some immediate consequences of the
properties outlined above.

\begin{proposition}
\label{prop.balanced.flat}
If there is a balanced or a contracting holonomy sequence at $x \in M$, then the Cotton-York
tensor vanishes at $x$.  
\end{proposition}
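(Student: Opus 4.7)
The plan is to apply Proposition \ref{prop:as_for_inv_sections} to the conformally invariant Cotton-York curvature map $\kappa:\hat{M}\to\mathbb{U}$ and then carry out a weight-space analysis in $\mathbb{U}$ to force $\kappa(\hat{x})=0$ at any $\hat{x}\in\pi^{-1}(x)$. First I would set up the invariance. Let $\{f_k\}\subset\Conf^{loc}(M)$ admit a balanced or contracting holonomy sequence $\{p_k\}$ at $x$, relative to $\hat{x}_k\to\hat{x}$. Up to vertical equivalence (which adds only a bounded perturbation to $D_k=\ln d_k$, hence preserves the stability type), I may write $p_k=d_k\tau_k$ with $d_k\in A'$ and $\tau_k\in P^+$. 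Since the Cotton-York tensor is conformally invariant, $\kappa$ is $\Conf^{loc}(M)$-invariant, so Proposition \ref{prop:as_for_inv_sections} yields $\kappa(\hat{x})\in\mathbb{U}^{AS}(p_k)$; and by section \ref{sec.cartan-geometry-curvature}, $\kappa$ factors through $\hat{M}/P^+$, so $\tau_k$ acts trivially on $\mathbb{U}$ and consequently $\kappa(\hat{x})\in\mathbb{U}^{AS}(d_k)$.

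The heart of the argument is then the computation of $\mathbb{U}^{AS}(d_k)$. The module $\mathbb{U}$ decomposes as a direct sum of one-dimensional $A$-weight spaces, with weights $3\alpha+w\beta$ for $w\in\{-2,-1,0,1,2\}$: the coefficient $3$ on $\alpha$ comes from the $d^{-3}$-action of the $\mathbb{R}^*$-factor of $G_0$, and $w\beta$ ranges over the $\SO(1,2)$-weights of $\mathbb{U}$ listed in section \ref{sec.cotton.tensor}. Rewriting the exponent as $(3-w)\alpha+w(\alpha+\beta)$, I would argue: in the balanced case, $(\alpha+\beta)(D_k)$ is bounded while $\alpha(D_k)\to+\infty$, so since $3-w\geq 1$ for every admissible $w$, each weight tends to $+\infty$. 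In the contracting case, $(\alpha+\beta)(D_k)\to+\infty$ and $\beta(D_k)\leq 0$ (as $d_k\in A'$) together force $\alpha(D_k)\to+\infty$; then for $w\geq 0$ the term $(3-w)\alpha(D_k)\to+\infty$ dominates, while for $w<0$ one observes $3\alpha(D_k)+w\beta(D_k)\geq 3\alpha(D_k)\to+\infty$ since $w\beta\geq 0$. Hence every $A$-weight of $\mathbb{U}$ tends to $+\infty$ under $d_k$, so decomposing any $v=\lim v_k\in\mathbb{U}^{AS}(d_k)$ into its weight components shows each component must vanish in the limit, giving $v=0$. I conclude $\kappa(\hat{x})=0$, i.e.\ the Cotton-York tensor vanishes at $x$.

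The only genuinely nontrivial conceptual point is the factorization of $\kappa$ through $\hat{M}/P^+$, which collapses the action of $p_k$ on the image of $\kappa$ to the semisimple $d_k$-action. This is a restatement of the conformal invariance of the Cotton-York tensor in dimension three, read through the identification of $\mathbb{U}$ given in section \ref{sec.cartan-geometry-curvature}; once it is in place, the rest is a direct calculation in the weight lattice. I expect no further obstacle, since the definitions of balanced and contracting are tailored precisely so that every $A$-weight on the curvature module is blown up.
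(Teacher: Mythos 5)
Your proof is correct and follows the same route as the paper: apply Proposition \ref{prop:as_for_inv_sections} to the conformally invariant Cotton map $\kappa:\hat{M}\to\mathbb{U}$, use that $P^+$ acts trivially on $\mathbb{U}$ to reduce to the $A$-part $d_k$, and check that every weight $3\alpha+w\beta$ of $\mathbb{U}$ blows up along $D_k=\ln d_k$ in both the balanced and contracting cases. The paper states the weight estimate without the case split, while you spell out the rewriting $3\alpha+w\beta=(3-w)\alpha+w(\alpha+\beta)$ together with $\beta(D_k)\leq 0$; this is the same calculation made explicit, and it is correct.
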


\begin{proof}
Let $\{ p_k = d_k \tau_k \}$ be a holonomy sequence at $x$, with $\{
d_k \} \subset A'$ satisfying the balanced or contracting condition in definition
\ref{def:holonomy_trichotomy}.  The $P$-representation on the Cotton
module $\mathbb{U}$ factors through the projection to $G_0$, which has
weights $3 \alpha + w \beta$, $w = -2,-1,0,1,2$.  
Now
$p_k$ acts by $d_k$, for which $(3 \alpha + w \beta)(\ln d_k)
\rightarrow \infty$, for all possible $w$.  Thus $\mathbb{U}^{AS}(p_k)
= 0$.  The conclusion follows from proposition \ref{prop:as_for_inv_sections}.
  \end{proof}

Assuming $(M,g)$ is not conformally flat, we may thus assume that
the set of points admitting a balanced or a contracting holonomy sequence is nowhere
dense.

\begin{proposition}
\label{prop.stable.flat}
 Assume that $(M,g)$ is real-analytic.  If there exists an unbounded sequence $\{ f_k \} \subset \mbox{Conf}^{loc}(M)$, all defined on
a neighborhood $U$ of $x \in M$, admitting a stable 
  holonomy sequence at $x$, then 
  $(M,g)$ is conformally flat.
\end{proposition}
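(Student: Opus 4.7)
My plan is to combine the two immediately preceding results with real-analyticity of the Cotton--York tensor, so the argument will be short.

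First I would invoke corollary \ref{coro.stable}: since $\{p_k\}$ is a \emph{stable} holonomy sequence for $\{f_k\}$ at $x$, the same sequence $\{p_k\}$ is a holonomy sequence for $\{f_k\}$ at every point of some open neighborhood $V \subset U$ of $x$. The key observation here is that the type of an unbounded sequence in $A'P^+$ (linear plus contracting or balanced) depends only on the sequence itself, not on the base point at which it is realized as a holonomy sequence. Consequently the propagated holonomy at each $y \in V$ is again contracting or balanced.

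Next I would apply proposition \ref{prop.balanced.flat} pointwise on $V$: existence of a contracting or balanced holonomy sequence at $y$ forces the Cotton--York tensor to vanish at $y$. Hence the Cotton--York tensor $C$ of $(M,g)$ vanishes identically on the open set $V$.

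Finally I would use real-analyticity: the Cotton--York tensor $C$ is a real-analytic section of $\wedge^2 T^*M \otimes T^*M$, and a real-analytic tensor vanishing on a nonempty open subset of a connected analytic manifold vanishes everywhere. Thus $C \equiv 0$ on $M$, and since in dimension $3$ the Cotton--York tensor is the full obstruction to conformal flatness (recalled in section \ref{sec.cotton.tensor}), we conclude that $(M,g)$ is conformally flat.

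I do not expect a serious obstacle: the substantive input---local propagation of stable holonomy and the weight computation giving $\mathbb{U}^{AS}(p_k) = 0$ for stable $\{p_k\}$---is already packaged into corollary \ref{coro.stable} and proposition \ref{prop.balanced.flat}. The only mild point to be careful about is verifying that the propagated sequence $\{p_k\}$ retains its contracting or balanced character at each point of $V$, which is immediate since $\{p_k\}$ is the same sequence throughout. The analyticity step is standard once we have an open set of vanishing.
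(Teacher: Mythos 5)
Your proof is correct and follows essentially the same route as the paper's: propagate the stable holonomy sequence to a neighborhood $V$ of $x$ (the paper cites proposition \ref{prop:propagation_holonomy} directly, you cite its packaged consequence, corollary \ref{coro.stable}), apply proposition \ref{prop.balanced.flat} to get vanishing of the Cotton--York tensor on $V$, and conclude by analyticity. No gaps.
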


\begin{preuve}
  A stable, unbounded holonomy sequence $\{p_k = d_k \tau_k \}$ is balanced or
  contracting.  As
  remarked above, proposition \ref{prop:propagation_holonomy} implies
  that all points in a neighborhood $V \subseteq U$ of $x$ admit the
  same stable holonomy sequence.  By proposition
  \ref{prop.balanced.flat}, the Cotton-York tensor vanishes on $V$   
 (see also \cite[Prop 5]{frances.ccvf}).  By the analyticity assumption, $(M,g)$ is conformally flat everywhere.
\end{preuve}

\subsection{Big isotropy implies conformal flatness}
\label{sec.big-isotropy}
Next we recall a key linearization theorem for conformal vector fields:

\begin{theoreme}[Frances--Melnick \cite{fm.champsconfs} Thm 1.2]
\label{thm.fm.linearization}
  Let $(M,g)$ be
  a real-analytic Lorentzian manifold of dimension at least 3.  Let $x \in M$ and let $X \in
  \mathcal{X}^{conf}(M)$ vanish at $x$, with local flow $\{
  \varphi^t_X \} < \mbox{Is}^{loc}(x)$.  If for some $\hat{x} \in \pi^{-1}(x)$, the image
  $\iota_{\hat{x}}(\{ \varphi^t_X \}) < G_0$, then
  $\{ \varphi^t_X \}$ is linearizable in a neighborhood of $x$.
  Otherwise, $(M,g)$ is conformally flat.
    \end{theoreme}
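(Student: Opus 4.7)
The plan is to work with the canonical lift $\hat X$ of $X$ to the Cartan bundle $\hm$---the unique $\omega$-preserving lift. Since $X(x)=0$, $\hat X$ is tangent to $\pi^{-1}(x)$, and for any fixed $\hx\in\pi^{-1}(x)$ there is a unique $Y\in\liep$ with $\varphi^t_{\hat X}(\hx)=\hx\cdot e^{tY}$; by construction $e^{tY}=\iota_{\hx}(\varphi^t_X)$. Changing $\hx$ to $\hx\cdot p$ replaces $Y$ by $\Ad(p^{-1})Y$, so the hypothesis ``$\iota_{\hx}(\{\varphi^t_X\})\subset G_0$ for some $\hx$'' is equivalent to $Y$ being $P$-conjugate to an element of $\lieg_0$.

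Suppose first that $Y\in\lieg_0$. Pick a $G_0$-invariant complement $\liem$ to $\liep$ in $\lieg$; for the $|1|$-graded conformal parabolic one has $\lieg=\liem\oplus\lieg_0\oplus\liep^+$ with $\liem$ canonically identified with $T_xM$. The map $\psi:V\subset\liem\to M$, $\psi(v)=\pi\circ\exp_{\hx}(v)$, is a local diffeomorphism onto a neighborhood of $x$. Because $\varphi^t_{\hat X}$ preserves $\omega$, it commutes with the Cartan exponential, so together with $P$-equivariance of $\exp$,
\[
\varphi^t_{\hat X}\bigl(\exp_{\hx}(v)\bigr)\;=\;\exp_{\hx\cdot e^{tY}}(v)\;=\;\exp_{\hx}\bigl(\Ad(e^{tY})v\bigr)\cdot e^{tY}.
\]
Projecting to $M$ gives $\varphi^t_X\circ\psi(v)=\psi\bigl(\Ad(e^{tY})v\bigr)$, which is the desired linearization; $G_0$-invariance of $\liem$ ensures $\Ad(e^{tY})v\in\liem$.

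In the contrary case, no $P$-conjugate of $Y$ lies in $\lieg_0$. Decompose $Y=Y_0+Y_+$ with $Y_0\in\lieg_0$ and $Y_+\in\liep^+$. Since $\liep^+$ is abelian, conjugation by $\exp(-Z)$ with $Z\in\liep^+$ sends $Y$ to $Y_0+(Y_+-[Z,Y_0])$; combined with $G_0$-conjugation, the failure of the first case means that, after any such normalization, $Y_+\notin\ad(Y_0)(\liep^+)$---an irreducible $\liep^+$-contamination that cannot be absorbed into $\lieg_0$. I then plan to exploit this through the pointwise holonomy sequence $p_k=e^{t_kY}$ at $x$: via the multiplicative Jordan decomposition $e^{tY}=s(t)u(t)$ in $P$ and absorption of bounded factors through vertical equivalence, $\{p_k\}$ admits an $A'P^+$-normal form that is balanced or contracting in the sense of definition \ref{def:holonomy_trichotomy} for an appropriate sub-sequence $t_k\to\infty$. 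Proposition \ref{prop.stable.flat} then yields vanishing of the Cotton--York tensor on a neighborhood of $x$, and real-analyticity propagates conformal flatness to all of $M$.

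The hard part is the second case: one must verify, for every $P$-orbit type of $Y\in\liep$ avoiding $\Ad(P)\lieg_0$, that a suitable $A'P^+$-renormalization of $\{e^{t_kY}\}$ is genuinely stable, rather than of bounded distortion or of mixed type. This reduces to an orbit-by-orbit analysis of the $\Ad(P)$-action on $\liep$ in Lorentzian signature, tracking how $Y_+$ combines with $Y_0$ to produce a dilation direction along which the Cotton module $\mathbb{U}$ contracts uniformly; the $|1|$-grading and the low rank of $\lieg_0\cong\co(1,2)$ keep the number of orbit types small enough to make this classification explicit.
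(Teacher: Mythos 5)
This theorem is cited from \cite{fm.champsconfs} and is not reproved in the present paper, so there is no internal argument to compare against; I am assessing your proposal on its own merits.

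Your treatment of the first alternative (the isotropy $Y:=\omega_{\hx}(\hat X)$ lies in $\lieg_0$) is correct and clean: $\varphi^t_{\hat X}$ preserves $\omega$ and hence commutes with the $\omega$-constant flows, the $P$-equivariance identity $\exp_{\hx\cdot p}(v)=\exp_{\hx}(\Ad(p)v)\cdot p$ holds, and since $\lieg_{-1}$ is $\Ad(G_0)$-invariant, the Cartan exponential chart $\psi$ does conjugate the local flow to $\Ad(e^{tY})$ acting linearly on $\lieg_{-1}\cong T_xM$.

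The second alternative, however, contains a genuine gap. Your plan is to show that whenever $Y$ is not $\Ad(P)$-conjugate into $\lieg_0$, the pointwise holonomy $\{e^{t_kY}\}$ has, up to vertical equivalence, a balanced or contracting $A'P^+$-normal form, so that proposition \ref{prop.stable.flat} applies directly. This is not true. Take $Y\in\liep^+$ nonzero: then $e^{tY}\in P^+$ for all $t$, so the $A'$-factor of the normal form is the identity and $\beta(D_k)\equiv 0$; the sequence is neither balanced, contracting, bounded distortion, nor mixed in the sense of definition \ref{def:holonomy_trichotomy}. More generally, if $Y_0$ is a nilpotent element of $\so(1,2)$ and $Y_+$ a translation it cannot absorb, a $KAK$-analysis of $e^{tY_0}$ gives $\alpha(D_t)$ bounded while $\beta(D_t)\to -\infty$, which is bounded distortion, again not stable. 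The paper itself is evidence that the translation case cannot be reduced to proposition \ref{prop.stable.flat} at the fixed point: the entire appendix (section \ref{sec.appendix}, establishing theorem \ref{thm.translation.holonomy}) is devoted precisely to deducing conformal flatness from a $P^+$-holonomy sequence, and it does so only by a quantitative analysis of conformal geodesic segments that produces a stable holonomy sequence at a \emph{different} point $\beta(1)$, not at $x$. Your orbit-by-orbit classification of $\Ad(P)$-types of $Y$ would therefore need to couple each non-linearizable orbit type with a propagation argument of this kind rather than a one-line stability estimate, and that is a substantially different and much harder proof than the one you have sketched.
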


Combining the theorem above with the previous propositions yields two
useful corollaries.

\begin{corollaire}
 \label{cor.small-stabilizer}
Assume that $(M,g)$ is real-analytic.  If for some $x \in M$,
  the isotropy algebra $\mathfrak{Is}^{loc}(x)$ contains a $2$-dimensional abelian subalgebra, then $(M,g)$ is conformally flat.
\end{corollaire}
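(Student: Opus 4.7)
The plan is to combine the internal structure of $\co(1,2)$ with the linearization theorem \ref{thm.fm.linearization} and the stable-holonomy principle of proposition \ref{prop.stable.flat}. The pivotal structural observation is that $\co(1,2) = \RR E \oplus \oo(1,2)$, where the semisimple factor $\oo(1,2) \cong \sld$ has rank one; consequently every $2$-dimensional abelian subalgebra of $\co(1,2)$ must contain the full center $\RR E$ spanned by the grading element $E$. The $2$-dim abelian hypothesis on the isotropy will thus yield, after a small adjustment, a local conformal vector field whose isotropy image is precisely a pure dilation, whence one can apply the stable-holonomy machinery.

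Let $\mathfrak{h} \subseteq \mathfrak{Is}^{loc}(x)$ denote the hypothesized $2$-dim abelian subalgebra. Fix $\hx \in \pi^{-1}(x)$ and set $\tilde{\mathfrak{h}} = \iota_{\hx}(\mathfrak{h})$, a $2$-dim abelian subalgebra of $\liep = \lieg_0 \oplus \lieg_1$. Let $\pi_0 \colon \liep \to \lieg_0$ be the projection modulo the ideal $\lieg_1$. If $\pi_0(\tilde{\mathfrak{h}})$ has dimension less than $2$, then $\tilde{\mathfrak{h}} \cap \lieg_1 \neq 0$, and the corresponding $Y \in \mathfrak{h}$ satisfies $\iota_{\hx'}(Y) \in \lieg_1$ for every $\hx' \in \pi^{-1}(x)$, because $\lieg_1$ is $\Ad(P)$-invariant. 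Thus $\iota_{\hx'}(\{\varphi^t_Y\}) \not\subset G_0$ for any $\hx'$, and theorem \ref{thm.fm.linearization} yields conformal flatness of $(M,g)$ directly.

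Otherwise $\pi_0(\tilde{\mathfrak{h}})$ is $2$-dim abelian in $\co(1,2)$, hence contains $\RR E$, so some $Y \in \mathfrak{h}$ has $\iota_{\hx}(Y) = E + v$ with $v \in \lieg_1$. Because $\ad(E)$ acts as the identity on $\lieg_1$, a short calculation using $[\lieg_1, \lieg_1] = 0$ gives $\iota_{\hx'}(Y) = E \in \lieg_0$ at the shifted base point $\hx' = \hx \cdot \exp(-v) \in \pi^{-1}(x)$. Applied to $Y$ at $\hx'$, theorem \ref{thm.fm.linearization} yields two alternatives: either $(M,g)$ is already conformally flat, or $\varphi^t_Y$ is linearizable near $x$. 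In the latter case, $\varphi^t_Y$ acts on $T_xM \cong \lieg_{-1}$ by the scalar $e^{-t}$ (since $\ad(E)|_{\lieg_{-1}} = -\operatorname{Id}$), so every $\varphi^t_Y$ with $t \geq 0$ is defined on one common star-shaped neighborhood $U$ of $x$ and contracts it toward $x$. Its pointwise holonomy $p_t = e^{tE}$ lies in $A'$ (as $\beta(tE) = 0$) and is contracting as $t \to +\infty$, since $\alpha(tE) + \beta(tE) = t \to +\infty$; being also linear, $\{p_{t_k}\}$ for $t_k \to +\infty$ is a stable holonomy sequence for the unbounded family $\{\varphi^{t_k}_Y\} \subset \mbox{Conf}^{loc}(M)$ on $U$. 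Proposition \ref{prop.stable.flat} then concludes that $(M,g)$ is conformally flat.

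The hardest step is the structural reduction in the third paragraph: one must marry the Lie-theoretic content of the $2$-dim abelian hypothesis with the $\Ad(P)$-ambiguity of the isotropy monomorphism so as to extract an isotropy image that is exactly a pure dilation. Rank one of $\oo(1,2)$ is what forces $\RR E$ into the $2$-dim abelian subalgebra, and invertibility of $\ad(E)$ on $\lieg_1$ is what lets the residual $\lieg_1$-component be absorbed by adjusting the base point. Once a pure dilation sits in the isotropy, the simultaneous contraction on tangent space and contracting behavior of $p_t = e^{tE}$ on the Cotton module make the appeal to proposition \ref{prop.stable.flat} routine.
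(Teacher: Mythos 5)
Your proposal is correct and takes essentially the same route as the paper: project the isotropy to $\lieg_0 \cong \co(1,2)$, note (using rank one of $\oo(1,2)$) that a $2$-dimensional abelian subalgebra must contain the grading element $E$, treat the non-injective case via theorem \ref{thm.fm.linearization}, and finish with a linear contracting holonomy and proposition \ref{prop.stable.flat}. You spell out two steps the paper leaves tacit — absorbing the $\lieg_1$-component by conjugating the base point $\hat x$ by $\exp(v)$, and invoking theorem \ref{thm.fm.linearization} a second time to secure a common domain for $\{\varphi^t_Y\}_{t\ge 0}$ — but the underlying argument is the same.
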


\begin{preuve}
Consider the linear part of the isotropy:
given $\hat{x} \in \pi^{-1}(x)$, compose the isotropy homomorphism 
$(\iota_{\hat{x}})_{*e}: \mathfrak{Is}^{loc}(x) \to \liep$
with the projection to $\lieg_0$,
to obtain $\lambda_{\hx}: \mathfrak{Is}^{loc}(x) \to \lieg_0$. 
Because $(M,g)$ is real analytic, if $\lambda_\hx$ is not
  injective, then $(M,g)$ is conformally flat by theorem \ref{thm.fm.linearization}.
Assuming $\lambda_{\hx}$ is injective, it has $2$-dimensional, abelian
image in $\lieg_0 \cong \RR \oplus \oo(1,2)$, 
necessarily containing the center.  There is thus a linear, contracting
1-parameter subgroup of $\mbox{Is}^{loc}(x)$.
    Proposition 
    \ref{prop.stable.flat} implies in this case that $(M,g)$ is
    conformally flat.  
\end{preuve}

Now let $X \in \mathcal{X}^{conf}(M)$ and $x \in M$. Let $Z_X^{loc} \subset
\mbox{Conf}^{loc}(M)$, 
$\liezx$, and $\mbox{Is}_X^{loc}(x)$   be as in section \ref{sec.gromov.frobenius}. The  Lie
algebra of $\mbox{Is}_X^{loc}(x)$ is denoted $\mathfrak{Is}_{X}^{loc}(x)$.

\begin{corollaire}
\label{coro.isotropy-dim2}
Assume $(M,g)$ is real-analytic.  If the dimension of
  $\mathfrak{Is}_{X}^{loc}(x)$ is at least $2$ for some $x \in M$, then $(M,g)$ 
  is conformally flat.
\end{corollaire}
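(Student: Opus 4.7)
The plan is to reduce Corollary \ref{coro.isotropy-dim2} to Corollary \ref{cor.small-stabilizer} or Proposition \ref{prop.stable.flat} by a case analysis on $\mathfrak{Is}_X^{loc}(x)$. If the linear part $\lambda_{\hat x}$ fails to be injective on $\mathfrak{Is}_X^{loc}(x)$, then any nonzero element $Y$ of the kernel has $(\iota_{\hat x})_{*}Y \in \liep^+$, so $\iota_{\hat x}(\{\varphi^t_Y\}) \not\subset G_0$ and Theorem \ref{thm.fm.linearization} already yields conformal flatness. I may therefore assume $\lambda_{\hat x}$ is injective on $\mathfrak{Is}_X^{loc}(x)$, embedding it as a subalgebra $\liea' \subset \lieg_0 = \RR \oplus \sld$ of dimension at least $2$.

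When $X(x) = 0$, the field $X$ itself belongs to $\mathfrak{Is}_X^{loc}(x)$; any element linearly independent from $X$ commutes with $X$ by the definition of $\mathfrak{Is}_X^{loc}(x)$, providing a $2$-dimensional abelian subalgebra of $\mathfrak{Is}^{loc}(x)$, and Corollary \ref{cor.small-stabilizer} applies. When $X(x) \neq 0$ and $\liea'$ is abelian, the same corollary applies directly to any $2$-dimensional abelian subalgebra of $\mathfrak{Is}_X^{loc}(x) \subset \mathfrak{Is}^{loc}(x)$.

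The essential case is $X(x) \neq 0$ with $\liea' \supset \mathfrak{aff}(\RR) = \mathrm{span}\{H,E\}$, $[H,E] = E$. The bracket relation forces $\lambda_{\hat x}(H) = aH_\alpha + \mu H_\beta$ with $\mu \neq 0$ (semisimple hyperbolic in $\sld$) and $\lambda_{\hat x}(E)$ proportional to a root vector $E_\beta$. Since every $Y \in \mathfrak{Is}_X^{loc}(x)$ has its flow preserving $X$, the vector $X(x)$ lies in the common kernel of $\lambda_{\hat x}(H)$ and $\lambda_{\hat x}(E)$ acting on $T_xM$. Reading this in the $H_\beta$-weight decomposition of $T_xM$, $X(x)$ must lie in the $(+1)$- or $0$-weight eigenspace, equivalently $a + \mu = 0$ or $a = 0$. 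In the first subcase the Cartan part of $\{\exp(t\lambda_{\hat x}(H))\}$ satisfies $(\alpha+\beta)(D_t) \equiv 0$ with each of $\alpha(D_t), \beta(D_t)$ unbounded, so the sequence is balanced in the sense of Definition \ref{def:holonomy_trichotomy}, hence stable, and Proposition \ref{prop.stable.flat} gives conformal flatness.

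The delicate subcase is $a = 0$, where $\liea' \subset \sld$ and no stable holonomy is produced by $\mathfrak{aff}(\RR)$ alone. Here one analyzes the action of $\liea'$ on the $5$-dimensional Cotton-York module $\mathbb{U}$: the $H_\beta$-weights are $-2,-1,0,1,2$ and $E_\beta$ raises weight, so the joint invariants reduce to $\mathbb{U}^0 \cap \ker(E_\beta|_{\mathbb{U}}) = 0$, forcing $C_x = 0$. Commutation with $X$ preserves the same algebraic configuration, so $C$ vanishes on the $Z_X^{loc}$-orbit of $x$. Promoting this vanishing from a (generically $\le 2$-dimensional) orbit to an open set of $M$ is the main obstacle; one would combine the semianalytic orbit structure of Theorem \ref{thm.gromov.stratification} with real-analyticity of the Cotton tensor and, if necessary, with propagation of holonomy along exponential curves transverse to the orbit (Proposition \ref{prop:propagation_holonomy}) and the recurrence properties of the essential flow $\{\varphi^t_X\}$.
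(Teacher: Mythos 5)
Your outline tracks the paper's proof closely for the first two cases: the non-injectivity of $\lambda_{\hat x}$ is dispatched by Theorem~\ref{thm.fm.linearization}, and the case $X(x)=0$ produces a $2$-dimensional abelian subalgebra that feeds into Corollary~\ref{cor.small-stabilizer}. The trouble is in the third case. Having placed $\lambda_{\hat x}(H)=aH_\alpha+\mu H_\beta$, $\lambda_{\hat x}(E)$ a root vector $E_\beta$ with $\mu\neq 0$, you claim that $X(x)$, being killed by both, ``must lie in the $(+1)$- or $0$-weight eigenspace'' of $H_\beta$, yielding the dichotomy $a+\mu=0$ or $a=0$. But the second branch is vacuous: the $\mathfrak{o}(1,2)\cong\sld$-action on $\RR^{1,2}$ is the $3$-dimensional irreducible, in which the raising operator $E_\beta$ is injective on the $0$- and lowest-weight spaces and kills only the highest-weight line. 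So $X(x)$ must be a highest-weight vector for $H_\beta$, and the condition $\lambda_{\hat x}(H)X(x)=0$ then determines $a$ as a nonzero multiple of $\mu$, placing the semisimple part of $\lambda_{\hat x}(H)$ on the line $\alpha(D)=-\beta(D)$ (up to the allowed conjugation in $P$). This is exactly the balanced holonomy the paper exploits. Note also that this forces $X(x)$ to be lightlike, which is how the paper phrases it: the annihilator in $\mathfrak{co}(1,2)$ of a non-isotropic vector is only $1$-dimensional, while the annihilator of a lightlike vector has dimension exactly $2$ and contains a balanced diagonal.

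The upshot is that the ``delicate subcase $a=0$'' you flag does not occur, and the sketch you give for it (kill the Cotton tensor on the orbit, then propagate via Theorem~\ref{thm.gromov.stratification} and Proposition~\ref{prop:propagation_holonomy}) is, as you yourself acknowledge, incomplete. If $a=0$ did arise, the resulting holonomy would be of bounded distortion rather than balanced, and Proposition~\ref{prop.stable.flat} would not apply; getting conformal flatness there would require a genuinely new argument. Fortunately you do not need one: correct the weight-space claim and the entire third case collapses into the balanced situation, after which the argument closes exactly as in the paper via Proposition~\ref{prop.stable.flat}.
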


\begin{preuve}
 If $X(x)=0$, namely $X \in \mathfrak{Is}_{X}^{loc}(x)$, then 
 $\mathfrak{Is}_{X}^{loc}(x)$ contains a two-dimensional abelian subalgebra, and we conclude by Corollary \ref{cor.small-stabilizer}.

Otherewise, $\mathfrak{Is}_{X}^{loc}(x)$
  annihilates $X(x) \neq 0$.
Again by theorem \ref{thm.fm.linearization}, we can assume that the
image $(\iota_{\hat{x}})_{*e}(\mathfrak{Is}^{loc}(x))$ is in $\lieg_0$ for some
$\hat{x} \in \pi^{-1}(x)$.
 A $0$-eigenvector in
 $\RR^{1,2}$ of a 2-dimensional
 subalgebra of $\lieg_0$ is necessarily lightlike, and the annihilator
 of a lightlike vector has dimension exactly 2.  It includes a
 diagonal subgroup $\{ d^t = e^{tD} \}$ with $\alpha(D) = - \beta(D)$;
 any unbounded sequence $\{ d^{t_k} \}$ has balanced, linear holonomy with
 respect to $\hat{x}$.
 Proposition \ref{prop.stable.flat} thus applies and ensures that  $(M,g)$ is  conformally flat. 
   \end{preuve}

   \subsection{A new curvature vanishing result}

   We will need the following strengthening of \cite[Thm
   1.4]{fm.champsconfs}. The proof is somewhat shorter for
   3-dimensional manifolds, so we restrict to that case here.

   \begin{theoreme}
     \label{thm.translation.holonomy}
Let $(M,g)$ be a real-analytic, compact, 3-dimensional Lorentzian manifold.  Let $\{
f_k \} \subset \mbox{Conf}(M,[g])$ be an unbounded sequence.  If $\{ f_k \}$
admits a holonomy sequence at $x \in M$ contained in $P^+$, then
$(M,g)$ is conformally flat.
   \end{theoreme}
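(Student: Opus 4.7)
The strategy is to show that the Cartan curvature $\kappa$ vanishes on an open subset of $M$ containing $x$, which by real-analyticity yields global conformal flatness. After passing to a subsequence, write $p_k = \exp(Z_k)$ with $Z_k \in \liep^+ \cong \RR^{1,2*}$, $\|Z_k\| \to \infty$, and $Z_k/\|Z_k\| \to Z_\infty \neq 0$. The essential algebraic observation is that in $\lieg$, the bracket $[Z, Z^\sharp]$ of $Z \in \liep^+$ with its metric dual (in a $G_0$-invariant complement to $\liep$) lies in the center of $\lieg_0$ with coefficient equal to the conformal norm squared of $Z$, and thus vanishes exactly when $Z$ is null. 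The argument splits into cases on whether $Z_\infty$ is isotropic.

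In the isotropic case, $[Z_\infty, Z_\infty^\sharp] = 0$. Using this together with the flexibility of vertical equivalence for holonomy sequences and corrections in $\liep$ (arranged order-by-order via the graded structure), one verifies that $Z_\infty^\sharp \in \lieg \setminus \liep$ lies in the approximately stable set of $\Ad(p_k)$. Proposition \ref{prop:propagation_holonomy} then yields that $\{p_k\}$ is a pointwise holonomy sequence for $\{f_k\}$ along the entire null geodesic through $x$ tangent to $Z_\infty^\sharp$. Applying Proposition \ref{prop:as_for_inv_sections} at each point of this geodesic to the Frobenius jet module $\mathbb{U}^{(i_0)}$ from Theorem \ref{thm.frobenius}, on which the $P^+$-action is faithful and unipotent, the approximately stable subspace is a proper $P^+$-submodule and constrains $D^{(i_0)}\kappa$ along the geodesic. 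Combined with $P$-equivariance (giving conjugate constraints at different choices of frame), Gromov's orbit stratification Theorem \ref{thm.gromov.stratification}, and analyticity, the resulting constraints force $D^{(i_0)}\kappa \equiv 0$ on an open set, yielding conformal flatness by Theorem \ref{thm.frobenius}.

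In the non-isotropic case, the approximately stable set of $\Ad(p_k)$ on $\lieg$ is entirely contained in $\liep$, so Proposition \ref{prop:propagation_holonomy} cannot directly spread the holonomy. Instead, one invokes global dynamics: by Proposition \ref{prop.recurrence} at recurrent points of $\{f_k\}$ on the compact $M$, local isotropy with noncompact identity component exists somewhere, and by Theorem \ref{thm.fm.linearization} this isotropy is linear (lies in $G_0$) unless conformal flatness already holds. Combining this linear isotropy with the $P^+$-holonomy at $x$ should produce a composite unbounded holonomy sequence in $A'P^+$ falling within the trichotomy of Definition \ref{def:holonomy_trichotomy}, so Propositions \ref{prop.balanced.flat}--\ref{prop.stable.flat} conclude. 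The main obstacle is precisely this non-isotropic case: carefully matching reference points and verifying that the composite holonomy is unbounded in a direction triggering one of the curvature-vanishing propositions --- or alternatively, ruling out the non-isotropic case by a structural argument --- is the technical heart of the argument.
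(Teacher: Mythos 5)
Your broad strategy --- establish curvature vanishing on an open set, then invoke analyticity --- matches the paper's, but the crucial technical steps are different from the paper's actual proof and have genuine gaps in both of your cases.

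The central difficulty, which your proposal does not confront, is that $\{p_k\} \subset P^+$ gives \emph{no direct constraint on the curvature}, because the $P$-representation on the Cotton module $\mathbb{U}$ factors through $P/P^+ \cong G_0$, so every $p_k$ acts trivially there. In your isotropic case, you propagate the holonomy sequence $\{p_k\}$ along the null geodesic tangent to $Z_\infty^\sharp$ via Proposition \ref{prop:propagation_holonomy}; but the sequence you obtain at each new point is \emph{still} $\{p_k\} \subset P^+$, so the approximately stable set in the curvature module remains all of it, and Proposition \ref{prop:as_for_inv_sections} yields nothing. The appeal to higher jet modules $\mathbb{U}^{(i_0)}$, $P$-equivariance and stratification is too vague to bridge this: being contained in a proper $P^+$-submodule does not force $D^{(i_0)}\kappa$ to vanish. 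What is needed is a change of basepoint under which the holonomy becomes a \emph{stable} (contracting or balanced) sequence, for which the approximately stable set of the Cotton module is trivially zero; the mechanism for this is exactly what the paper's Proposition \ref{prop.pullback} provides, and it requires an explicit conformal-geodesic computation in $\Ein^{1,2}$ which your argument avoids. In the non-isotropic case your proposal is explicitly incomplete, and the suggested combination of recurrence-produced isotropy with the $P^+$-holonomy at $x$ has no clear mechanism for producing a single composite holonomy sequence at a single point --- the two pieces of data live at different basepoints.

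For comparison, the paper's proof avoids the isotropic/non-isotropic dichotomy entirely. Writing $p_k$ with parameters $t_k \to \infty$ and unit vectors $v_k \to v$, it instead distinguishes whether $t_k|q(v_k)| \to \infty$ or stays bounded; the first case covers all $q(v) \neq 0$ and some null $v$. In each case, the paper chooses a specific $x$ in the Minkowski patch (with $\epsilon q(x) > 0$, $\langle x,v\rangle > 0$ in the first; $q(x)=0$, $\langle x,v\rangle > 0$ in the second), shows via the explicit formula \eqref{eq.formule1} that $p_k.[\beta] \to o$ and that $\sup_u\|x_k(u)\| \to 0$ uniformly over approaching basepoints, and then quotes \cite[Lem 4.3]{frances.degenerescence} to extract a pointwise stable holonomy sequence at $\beta(1)$. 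That stable sequence, transferred back to $M$ by Proposition \ref{prop.pullback}, triggers Proposition \ref{prop.stable.flat}. Your proposal lacks this model-space analysis and with it the mechanism that converts a $P^+$ holonomy sequence into a stable one, so as written it does not prove the theorem.
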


Notice that the hypothesis that $\{ f_k \}$
admits a holonomy sequence at $x \in M$ contained in $P^+$  is equivalent to both sequences of differentials 
$\{D_x f_k\}$ and $\{(D_x f_k)^{-1}\}$ being bounded. 
The proof of the theorem is somewhat technical, and is deferred to section \ref{sec.appendix}, at the end of the paper.

\section{Case $\liezx$ is $4$-dimensional}
\label{sec.4d}

  We begin the proof of the main theorem \ref{thm.main.theorem}. 
  Recall that $(M,g)$ is 3-dimensional, compact, real-analytic, and
Lorentzian. The group 
$\mbox{Conf}^{\,0}(M,[g])$ is assumed to be  essential.  By proposition \ref{prop.existence.essential.flow}, it admits an essential 
 conformal vector field $X \in
\mathcal{X}^{conf}(M)$.  Denote by $\liezx$ the algebra of local
conformal vector fields on $M$ commuting with $X$ (see section \ref{sec.gromov.frobenius}).  By corollary
\ref{coro.isotropy-dim2}, the dimension of $\liezx$ is at most four.  Of course, $X
\in \liezx$, so it has dimension at least one.  We will prove theorem \ref{thm.main.theorem} by analyzing each possible value 
of this dimension.

Suppose $\liezx$ has dimension four.  Corollary
\ref{coro.isotropy-dim2} implies that all $\liezx$-orbits have
dimension $3$, hence there is only one such orbit. In particular,
$(M,g)$ is locally conformally homogeneous and $X$ is nowhere vanishing.
Given $x \in M$, the isotropy  $\mbox{Is}_X^{loc}(x)$ 
 fixes the nonzero vector $X(x)$.  By theorem
 \ref{thm.fm.linearization}, $(M,g)$ is conformally flat, or there
 exists $\hat{x} \in \pi^{-1}(x)$ with $(\hat{I}_X)_\hx
 < G_0$.  This subgroup 
 is $1$-dimensional, and two cases may occur. 
 
 If $(\hat{I}_X)_\hx$ is not unimodular, then it is conjugate in $G_0$
 to a diagonal subgroup
 $ \{ \mbox{diag}(1, \lambda , \lambda^2) \ | \ \lambda \in \RR^*  \}.$
Any unbounded sequence of this group is balanced and linear.  
 Then proposition \ref{prop.stable.flat} ensures that $(M,g)$ is conformally flat. 

Next suppose $(\hat{I}_X)_\hx$ is unimodular.  Then there is $\lambda_x \in
[g]_x$ on $T_xM$ which is preserved by $D_xf$ for every $f \in
\mbox{Is}_X^{loc}(x)$.  Given $y \in M$, choose 
$f \in Z_X^{loc}$
 sending $x$ to $y$, and define $\lambda_y= f_*\lambda_x$. This does
 not depend on the choice of $f$, because for another choice, say 
 $h \in Z_X^{loc}$, the difference $h^{-1} \circ f \in
 \mbox{Is}_X^{loc}(x)$, which preserves $\lambda_x$.
The result is a metric $\lambda \in [g]$ which is $Z_X^{loc}$-invariant.
Note that $\lambda$ is analytic; indeed, given $y \in M$, there are $Y, Z, T \in \liez_X$
defined on a neighborhood of $y$,
with values at $y$ spanning $T_yM$.
 The map $(u,v,w) \mapsto \varphi_Y^u \circ \varphi_Z^v \circ \varphi_T^w (y)$ provides a local analytic chart around $y$, in which 
 $\lambda$ is analytic.  We conclude that $\{ \varphi_X^t \}$ is
 inessential, a contradiction.


\section{Case $\liezx$ is $3$-dimensional}
\label{sec.3d}

The center of $\liezx$ is nontrivial because it contains $X$. Thus if
$\liezx$ is 3-dimensional, it could be isomorphic to $\RR^3$, $\heis(3)$, or $\mathfrak{aff}(\RR) \oplus \RR$.
For the sake of efficiency, we will assume for the rest of this
section that $(M,g)$ is \emph{not} conformally flat, in order to arrive
at a contradiction with the fact that $\{ \varphi_X^t \}$ is essential.
The results collected thus far lead to:
\begin{proposition}
 \label{prop.propertiesdim3}
 \begin{enumerate}
 \item The flow $\{\varphi_X^t \}$ has no singularities.
 \item All $\liezx$-orbits have dimension at least two.
 \item There is a closed $\liezx$-orbit $\Sigma$, which is a torus or a Klein bottle, on which $X$ is lightlike.
 \end{enumerate}
\end{proposition}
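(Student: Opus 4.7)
We assume throughout that $(M,g)$ is \emph{not} conformally flat, aiming at a contradiction with the essentiality of $X$.

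Assertion (2) is immediate from corollary \ref{coro.isotropy-dim2}: the isotropy $\mathfrak{Is}_X^{loc}(x)$ has dimension at most $1$ at every $x \in M$, so every $\liezx$-orbit has dimension at least $3 - 1 = 2$.

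For (1), suppose $X(x_0) = 0$. Every $Y \in \liezx$ satisfies $[Y, X] = 0$, so $(\varphi_Y^s)_* X = X$, giving $X(\varphi_Y^s x_0) = d\varphi_Y^s(X(x_0)) = 0$ for all $s$. Iterating over a spanning family of $\liezx$, $X$ vanishes on the entire orbit $\mathcal{O}_X(x_0)$, which by (2) has dimension at least $2$; hence $\dim \ker dX|_{x_0} \geq 2$. Theorem \ref{thm.fm.linearization} supplies $\hat x_0$ with $X_0 := \lambda_{\hat x_0}(X) \in \lieg_0 = \RR \oplus \oo(1,2)$ representing $dX|_{x_0}$, since the alternative contradicts our hypothesis. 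An elementary Jordan-type check shows that every nonzero element of $\lieg_0$ acting on $\RR^{1,2}$ has kernel of dimension at most $1$: a nonzero scaling component forces the kernel into a single eigenspace of the $\oo(1,2)$-part, while a nonzero pure $\oo(1,2)$-element has a $1$-dimensional $0$-eigenspace in each of its elliptic, hyperbolic, and nilpotent Jordan types. The case $X_0 = 0$ would force $X \equiv 0$ in a linearized chart near $x_0$, hence on all of $M$ by real-analyticity, contradicting essentiality. Thus $X_0 \neq 0$ with $\dim \ker X_0 \leq 1$, contradicting $\dim \ker dX|_{x_0} \geq 2$.

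For (3), any closed $\liezx$-orbit has dimension $2$: a $3$-dimensional closed orbit would exhaust $M$ with $\liezx$ acting locally simply transitively and trivial isotropy; transporting any $\lambda_{x_0} \in [g]_{x_0}$ along $Z_X^{loc}$ would then yield a globally defined, real-analytic, $\liezx$-invariant metric in $[g]$, making $X$ Killing, against essentiality. Any $2$-dimensional closed $\liezx$-orbit is compact and locally homogeneous under the $3$-dimensional solvable algebra $\liezx$, hence is a torus or Klein bottle by the classification of compact surfaces with transitive solvable action. To place the lightlike condition, set $f := g(X, X)$. Conformality of each $Y \in \liezx$ with factor $\lambda_Y$ combined with $[Y, X] = 0$ gives $Y \cdot f = 2 \lambda_Y f$, so $\{f = 0\}$ is closed and $Z_X^{loc}$-invariant. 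If $\{f = 0\}$ were empty, the rescaling $\bar g := |f|^{-1} g$ would be a globally defined, real-analytic metric in $[g]$, and a direct computation using $X \cdot f = 2 \mu_X f$ yields $\mathcal{L}_X \bar g = 0$, contradicting essentiality. Hence $\{f = 0\}$ is nonempty, and theorem \ref{thm.gromov.stratification}(2) applied inside this closed invariant set produces a closed $\liezx$-orbit $\Sigma \subset \{f = 0\}$, necessarily a torus or Klein bottle, on which $X$ is lightlike.

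The principal ingredients are the kernel count in $\lieg_0$ used for (1)—forcing the absence of zeros of $X$ by colliding vanishing on a $2$-dimensional orbit with the at-most-$1$-dimensional kernels afforded by linearization—and the conformal rescaling $\bar g = |g(X, X)|^{-1} g$ in (3), which converts nonvanishing of $g(X, X)$ into a globally defined $X$-Killing metric and thereby forces the lightlike locus to be nonempty and to contain a closed orbit by Gromov's stratification.
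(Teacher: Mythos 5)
Your proof is correct and follows the same overall strategy (assume non-flatness, derive a contradiction with essentiality), but two of the three points are handled differently from the paper, and one step is slightly under-justified.

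For (1), your Jordan-type kernel count in $\lieg_0$ is equivalent to the paper's one-liner: the paper observes that $D_{x_0}\varphi_X^t$ fixes the two linearly independent vectors $Y(x_0), Z(x_0)$ for $Y, Z \in \liezx$ independent modulo $X$, and a fixed pair in $\CO(1,2)$ forces identity, contradicting theorem \ref{thm.fm.linearization}. Your derivation of $\dim\ker dX|_{x_0}\geq 2$ by showing $X$ vanishes on the whole $\liezx$-orbit is a valid, slightly longer route to the same linear-algebra fact.

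For (3), you rule out a $3$-dimensional closed orbit by the invariant-metric transport argument, which is essentially the mechanism the paper uses in section \ref{sec.4d} for the $4$-dimensional case. The paper instead argues that a $3$-dimensional closed orbit makes $\mathfrak{Is}_X^{loc}(y)$ have trivial identity component everywhere, and then invokes Poincar\'e recurrence plus proposition \ref{prop.recurrence} (recurrent points have noncompact local isotropy) to get a contradiction. Both work; the recurrence argument is shorter here because proposition \ref{prop.recurrence} is already on the shelf. One place you should tighten: the isotropy group $\mbox{Is}_X^{loc}(x_0)$ is not literally trivial when the orbit is $3$-dimensional, only its identity component is (it is a finite algebraic subgroup by theorem \ref{thm.isotropy.algebraic}); well-definedness of the transported $\lambda_y$ then requires observing that a finite subgroup of $\CO(1,2)$ preserves some inner product, or averaging $\lambda_{x_0}$ over the finite isotropy. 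Finally, to identify $\Sigma$ as a torus or Klein bottle, the paper simply cites Poincar\'e--Hopf ($X$ is nonvanishing by (1) and tangent to the compact surface $\Sigma$, so $\chi(\Sigma)=0$); your appeal to the classification of compact surfaces with a transitive solvable action gets there too but is heavier machinery than needed.
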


 \begin{preuve}
  If there were a singularity $x$, the
   differential $\{ D_x\varphi_X^t \}$
 at $x$ would fix two linearly independent vectors, the
 values at $x$ of two elements of $\liezx$ linearly independent modulo
 $X$.  From the fact that the differential preserves $g_x$ up to
 scale, simple linear algebra leads to the conclusion that
 $D_x\varphi_X^t=\mbox{Id}_{T_x M}$ for all $t$, a contradiction with theorem \ref{thm.fm.linearization}. 
 
 Point $(2)$ follows from 
 corollary \ref{coro.isotropy-dim2}.
 
 If $g(X,X)$ were nonvanishing, then
$\{ \varphi_X^t \}$ woud preserve $g/g(X,X)$ and be inessential. Thus
the zero set $\Lambda$ of $g(X,X)$ is nonempty and closed. By theorem \ref{thm.gromov.stratification} (2), there exists $x \in \Lambda$
 such that the $\liezx$-orbit $\Sigma$ of $x$ is closed. If $\Sigma$
 is $3$-dimensional then it equals $M$, and the identity component of
 $\mbox{Is}_X^{loc}(y)$ is trivial at each $y \in M$.  But since $M$ is compact, 
  the flow 
$\{ \varphi_X^t \}$ has recurrent points, which leads to a contradiction with 
 proposition 
 \ref{prop.recurrence}.  Thus $\Sigma$ must be a closed surface.  It is a torus or a Klein bottle because $X$ is
   nonvanishing. By construction, $X$ is lightlike on $\Sigma$. Point
   (3) is proved.  \end{preuve}


Arguments follow for each possible isomorphism type of $\liezx$.

\subsection{Case $\liezx$ is isomorphic to $\RR^3$}

Let $\Sigma$ be a 2-dimensional orbit, as guaranteed by proposition
\ref{prop.propertiesdim3} (3), and let $x \in \Sigma$.  Because $\liezx$ is abelian,
the isotropy at $x$ fixes two linearly independent vectors, spanning
$T_x \Sigma$.  As in the proof of proposition
\ref{prop.propertiesdim3} (1), we have a contradiction with theorem
\ref{thm.fm.linearization}.

\subsection{Case $\liezx$ is isomorphic to $\heis(3)$}

In this case, $X$ generates the center of $\heis(3)$.

First suppose there exists an open $\liezx$-orbit $\Omega$. On a
sufficiently small open subset $U \subset \Omega$, there is
$g_0 \in \left. [g] \right|_{U}$ such that $(U,g_0)$ is isometric to an open subset of  $\Heis(3)$ endowed with a
 left-invariant Lorentzian metric.  
Left-invariant Lorentzian metrics on 
 $\Heis(3)$ were classified in \cite{rahmani.lorentz.heis}; there are
 three isometry types, according to the sign of $\langle X,X \rangle$.
 If $\langle X,X \rangle =0$, the metric is flat. By the analyticity
 assumption, $(M,g)$ is conformally flat, contradicting our current hypothesis.
If $\langle X,X \rangle \neq 0$, the isometry group of the metric on $\Heis(3)$ is
 $4$-dimensional and centralizes the center of $\Heis(3)$.  
  Then $\liezx$ has dimension at least 4, contradicting our current assumption that it is 3.
 %

We conclude that the $\liezx$-orbits are all $2$-dimensional.  The
following proposition, when combined with our analyticity assumption, concludes this case.

\begin{proposition}
\label{prop.heisenberg}
 Let $(M, g)$ be a smooth, $3$-dimensional,  Lorentzian manifold.  Suppose there is a
 nonempty open subset $\Omega \subseteq M$ with a local conformal
 action of $\heis(3)$, such that all pseudo-orbits are $2$-dimensional.  Then  $(M, g)$ is conformally flat.
\end{proposition}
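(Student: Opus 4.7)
I argue by contradiction, supposing $\Omega$ is not conformally flat, in order to pin down the geometry to an explicit normal form which will turn out to be conformally flat.

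The first step extracts structural information from the isotropy. Fix $x \in \Omega$ and let $\Sigma$ be the $\heis(3)$-orbit through $x$. Since $\dim \Sigma = 2$, the isotropy algebra $\mathfrak{Is}_X^{loc}(x)$ is one-dimensional, spanned by some nonzero $W = \alpha X + \beta Y + \gamma Z \in \heis(3)$ with $W(x) = 0$. By theorem \ref{thm.fm.linearization}, applied under the hypothesis that $\Omega$ is not conformally flat, the isotropy image $\iota_{\hat x}(\{\varphi^t_W\})$ lies in $G_0$ for some $\hat x \in \pi^{-1}(x)$. Via the evaluation map $\mathrm{ev}_x : \heis(3) \to T_x M$ and the relations $[Y, Z] = X$, $[X, \cdot] = 0$, the derivation $\ad W$ is nilpotent of rank one with image equal to the class of $X$ modulo $\langle W \rangle$. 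Hence $D_x \varphi^t_W|_{T_x\Sigma} = I + t\, D_x W|_{T_x\Sigma}$ is a nontrivial unipotent whose unique fixed direction is $\RR X(x)$. But a nontrivial unipotent element of $\mathrm{CO}(1,2)$ cannot act nontrivially on a non-degenerate $2$-plane that it preserves, so $T_x\Sigma$ must be degenerate with null line $\RR X(x)$. In particular $g(X, X) \equiv 0$ on $\Omega$, and the $\heis(3)$-orbit foliation $\mathcal{F}$ is everywhere tangent to the conformally distinguished null line $\RR X$.

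The second step converts this structural information into a coordinate normal form. Near a fixed $x_0 \in \Omega$, I pick a null vector field $N$ transverse to $\mathcal{F}$ with $g(X, N) \equiv 1$ and a unit spacelike field $E$ tangent to $\mathcal{F}$ and orthogonal to $X$, forming a null triad $(X, E, N)$. Introduce coordinates $(x, y, z)$ adapted to this triad so that $X = \partial_z$, the leaves of $\mathcal{F}$ are $\{x = \mathrm{const}\}$, and $y$ parametrizes $E$-curves. Because $X$ is lightlike, conformal, and spans the null direction of $\mathcal{F}$, the metric takes the form
\begin{equation*}
g = 2\, dx\, dz + dy^2 + A(x, y)\, dx\, dy + B(x, y)\, dx^2,
\end{equation*}
with no $z$-dependence. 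The conformal Killing equations for $Y$ and $Z$, together with $[Y, Z] = \partial_z$, further constrain $A$ and $B$: both $Y$ and $Z$ preserve the $z$-fibration and project to conformal fields on the $(x, y)$ leaf-space quotient, and a short computation forces $A$ to vanish after a conformal rescaling absorbed in the gauge $(E, N)$. The reduced metric $g = 2\, dx\, dz + dy^2 + B(x, y)\, dx^2$ is a three-dimensional pp-wave whose Cotton--York tensor vanishes identically by a direct computation, contradicting the standing assumption.

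The main obstacle is the second step: upgrading the pointwise data ($X$ lightlike, orbits degenerate) into the rigid coordinate form. The delicate points are the simultaneous choice of gauge within $[g]$ making $(X, E, N)$ a null triad with $g(X, N) \equiv 1$, and the reduction via the conformal Killing equations for $Y$ and $Z$, together with $[Y, Z] = X$, which eliminates the coefficient $A$ up to conformal rescaling; once the pp-wave form is reached, vanishing of the Cotton--York tensor is routine. An alternative approach would use $P$-equivariance of the Cartan curvature $\kappa$ together with its invariance under the lifted $\heis(3)$-action in $\hat\Omega$: invariance under the unipotent isotropy confines $\kappa(\hat x)$ to the one-dimensional highest-weight subspace of the Cotton module, and compatibility with the remaining Heisenberg translations $Y, Z$ should rule out any nonzero value, giving $\kappa \equiv 0$ directly.
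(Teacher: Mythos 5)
Your first step is sound in spirit: the isotropy at each point of a $2$-dimensional orbit lies in $G_0$ and must be unipotent, forcing the orbits to be degenerate with null line $\RR X$. This mirrors what the paper does implicitly (the paper's matrix $A$ computation reaches the same conclusion directly).

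The second step, however, has a genuine gap: the claimed ``direct computation'' is false. A $3$-dimensional pp-wave of the form
\[
g = 2\,dx\,dz + dy^2 + B(x,y)\,dx^2
\]
with $X=\partial_z$ null and geodesic is \emph{not} automatically conformally flat. The Cotton--York tensor of such a metric is essentially $\partial_y^3 B$, so it vanishes only when $B$ is at most quadratic in $y$. (This is exactly how non-conformally-flat Lorentzian $3$-manifolds with a null parallel vector field are produced.) You cannot conclude at this stage. You have only used $X$, $Y$ and the degeneracy of orbits; the third Heisenberg generator $Z$ and the full conformal Killing system have not yet been exploited, and it is precisely this extra constraint that controls the $y$-dependence of $B$. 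In the paper's proof, after reducing to generators $X'=\partial_x$, $Y'=\partial_y$, $Z'=y\partial_x + z\partial_y$ (in their coordinates the orbit direction is $x$ and the transverse variable is $z$), the conformal Killing condition for $Z'$, combined with the observation that $X'$ and $Y'$ are in fact \emph{isometric} Killing fields, forces the remaining metric coefficient to depend only on the transverse coordinate — i.e.\ the paper gets $-2\,dx\,dz + dy^2 + \gamma(z)\,dz^2$, which is literally flat after a shear. To finish your argument you would need to carry out the conformal Killing computation for $Z$ and show that it pins down $B$ (in your coordinates) to a form with $\partial_y^3 B = 0$; asserting vanishing of the Cotton--York tensor for the generic pp-wave form does not work.

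A smaller imprecision: ``a short computation forces $A$ to vanish after a conformal rescaling absorbed in the gauge $(E,N)$'' glosses over the normalization that the paper handles carefully via the nondegeneracy $b'(p_3)\neq 0$ and the reparametrization $b(\psi(z))=z$. The sketched alternative via the Cartan curvature module is plausible but would still require checking that the Heisenberg translations rule out the remaining weight vector, which is not obviously shorter than the coordinate computation.
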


\begin{preuve}
  Let $Y$ and $Z$ be the further generators of $\heis(3)$, such that $[X,Y]=[X,Z]=0$, and $[Y,Z]=X$. 
    Since
 $X$ and $Y$ commute and span a $2$-dimensional space at each point of
 $\Omega$, there exist local coordinates $(x,y,z)$
 in which $X=\delx$ and $Y=\dely$.
  Because the $\liezx$-orbits are $2$-dimensional, $Z$ is of the form $\lambda \delx + \mu \dely$ for some functions $\lambda$ and $\mu$.
    The bracket relations lead to 
    $$0=\frac{\partial \lambda}{\partial x}=\frac{\partial
      \mu}{\partial x}=\frac{\partial \mu}{\partial y} \qquad
    \mbox{and} \qquad 
    \frac{\partial \lambda}{\partial y}=1$$
    Hence we can write
     $$ Z=(y+a(z))\delx + b(z)\dely.$$
     Observe that replacing $Z$ by $Z-a(0)X-b(0)Y$ will not  affect
     the bracket relations between $Z$, $Y$ and $X$, so we
      may assume that $a(0)=b(0)=0$.
      
Given a point $p=(p_1,p_2,p_3)$ in the domain of such a coordinate chart, the vector field $U=Z-(p_2+a(p_3))X-b(p_3)Y$ is nonzero and vanishes at $p$.
At $p$,
    $$\left[ U,\delx \right]=0, \ \left[ U,\dely \right]=- \delx,\
    \left[ U,\delz \right]=-a'(p_3)\delx-b'(p_3)\dely.$$
    Since $U$ belongs to $\heis(3)$, hence 
    is conformal for $g$,  the matrix
    $$A=\left(  \begin{array}{ccc}
                                                       0&-1&a'(p_3)\\
                                                       0&0&b'(p_3)\\
                                                       0&0&0\\
                                                      \end{array}
                                                    \right) $$
                                                    which is the
                                                    matrix of $\nabla
                                                    U(p)$ in the basis
                                                    $\{ \delx, \dely, \delz \}$,  must satisfy the identity
                                                    $$g_p(A \cdot ,
                                                    \cdot )+g_p(\cdot
                                                    ,A \cdot )=\alpha g_p,  \ \alpha \in \RR$$

                                                    The matrix $A$ generates a $1$-parameter group
                                                    $\{e^{tA} \}$ in  $\RR \times \OO(1,2)$, which is nontrivial because 
                                                     the rank of $A$ is at least $1$. If the rank of $A$ were $1$, 
                                                      then $\{e^{tA} \}$ would fix 
                                                    two linearly independent vectors. But no nontrivial flow in  $\RR \times \OO(1,2)$
                                                     has this property, so that we infer $b'(p_3) \not =0$. 
  
  As $p$ was arbitrary, we may assume the
  derivative $b'$ does not vanish at any point of such a coordinate chart.  
  Now let $\psi$ be a smooth  diffeomorphism on an interval around $0$ such that $\psi(0)=0$ and $b(\psi(z))=z$. 
  The transformation
   $$ \varphi: (x,y,z) \mapsto (x,y -a(\psi(z)),\psi(z)).$$
    then yields  a local diffeomorphism fixing the origin.  Applying
    $(\varphi^{-1})_*$ to the generators yields
   $$ Z'=y \delx+z \dely, \ Y'=\dely, \ {\rm and} \ X'=\delx$$
   which are conformal for the metric $g'=\varphi^*g$.
   
 Again, let $p=(p_1,p_2,p_3)$ be a point in our coordinate chart.  The vector field $U'=Z'-p_2 X'-p_3Y'$ vanishes at 
    $p$ and is a conformal for  $g'$. A straigthforward computation yields
     $$ \left[ U',\delx \right]=0, \ \left[ U',\dely \right]=-\delx, \
     {\rm and}\  \left[ U',\delz \right]=-\dely$$
     everywhere. 
     Now the matrix
     $$B=\left(  \begin{array}{ccc}
                                                       0&1&0\\
                                                       0&0&1\\
                                                       0&0&0\\
                                                      \end{array}
                                                    \right) $$
                                                    satisfies 
 $$g_p'(B \cdot , \cdot ) + g_p'(\cdot ,B \cdot)=\alpha g_p', \
 \alpha \in \RR$$ 
The matrix of 
 $g_p'$ in the basis
 $ \left\{  \delx,\dely,\delz \right\}$ is thus of the form 
 $$\left(  \begin{array}{ccc}
                                                       0&0&-\beta(p)\\
                                                       0&\beta(p)&0\\
                                                       -\beta(p)&0&\gamma(p)\\
                                                      \end{array}
                                                    \right), \ \beta(p) >0$$
                                 Replace $g'$ by $\frac{1}{\beta} g'$, which amounts to assuming $\beta=1$.

 Now, $X'$ and $Y'$ are conformal Killing fields for $g'$. But $g'(Y',Y')=1$, and $Y'$ commutes with $X'$ and $Y'$.
  It follows that $X'$ and $Y'$ are actually isometric Killing fields for $g'$. In particular, the 
    function  $\gamma$ only depends on the variable 
 $z$, and  the metric $g'$ is:
 $$ -2 dx dz+ dy^2+\gamma(z)dz^2.$$
  Now, if $\zeta(z)$ is an antiderivative of $\gamma(z)/2$, then the change of coordinates
  $$ (x,y,z) \mapsto (x+\zeta(z),y,z)$$ converts  $g'$ to
  $-2dx dz+dt^2$. We conclude that the conformal class $[g]$,
  restricted to a nonempty open subset of $\Omega$, contains a flat metric.
\end{preuve}

\subsection{Case $\liezx$ is isomorphic to $\mathfrak{aff}(\RR) \oplus \RR$}

Let $\Sigma$ be as in proposition \ref{prop.propertiesdim3} (3) and $x_0 \in \Sigma$.  Let $Y$ and $Z$ be further generators of
$\liezx$ on a neighborhood of $x_0$ such that
 $[Y,Z]=Z$ and all other brackets are zero. The isotropy at
$x_0$ can be of three types:
\begin{itemize}
  \item isotropy generated by $U \in \liez_X$ transverse to
$\operatorname{Span}(Z,X)$:  The tangent vector $X(x_0)$ is lightlike and
fixed by the isotropy.  
Rescaling $U$ if necessary gives
$$D_{x_0}\varphi_U^t = 
\left( \begin{array}{ccc}
1 & 0 & 0\\
0 & e^{t} & 0\\
0 & 0 & e^{2t}\\
\end{array}
\right).
$$
By theorem \ref{thm.fm.linearization} and proposition
\ref{prop.stable.flat}, $(M,g)$ is conformally flat, a contradiction.

\item isotropy generated by $Z$: Then $D_{x_0}\varphi_Z^t$ is trivial,
  which implies conformal flatness by theorem \ref{thm.fm.linearization}. We thus discard this case too.
\item isotropy generated by $Z+ cX$ with $c \not = 0$: We handle this case below.
\end{itemize}

First we construct a model for the geometry on $\Sigma$.
Consider $\RR^2$ with coordinates $(x,y)$, and denote by
${{\mathcal H}}^+$ the upper half-space defined by  $y>0$.
On ${{\mathcal H}}^+$, let $A=y
\frac{\partial}{\partial y}$, $B=y \frac{\partial}{\partial x}$, and
$C= \frac{\partial}{\partial x}$. The only nontrivial bracket relation
is $[A,B]=B$, so that the Lie algebra $\lieh$  generated by $A,B,$ and
$C$
  is isomorphic to ${\mathfrak{aff}}(\RR) \oplus \RR$. These vector
fields are complete and integrate to a genuine action of
  $H \simeq {\operatorname{Aff}(\RR) \times \RR}$ on $\calh^+$ given by
the affine transformations:
  $$ \left(  \begin{array}{cc} 1 & b\\ 0 & e^a \end{array} \right) + 
\left(  \begin{array}{c} c \\ 0 \end{array} \right), \ a,b,c \in \RR.$$

  On ${{\mathcal H}}^+$, the isotropy for the local action of $\lieh$ is
always generated by
  an element of
   $\operatorname{Span}(B,C)$ transverse to $\RR C$. Thus the
local action of $\liezx$ on $\Sigma$ is locally modeled
   on that of $\lieh$
    on ${{\mathcal H}}^+$.

Let $G$ be the $4$-dimensional Lie group given by the
following transformations of ${{\mathcal H}}^+$:
$$ (x,y) \mapsto (x + \beta y + \gamma \ln(y) + \tau, e^{\alpha} y)
\qquad \alpha, \beta, \gamma, \tau \in \RR$$

In coordinates $(\alpha, \beta, \gamma, \tau)$, the
  subgroup comprising elements of the form $(0,\beta, \gamma, \tau)$ is normal and abelian. The action of
$(\alpha,0,0,0)$ on it is given by the matrix:
  $$ \left( \begin{array}{ccc} e^{- \alpha} & 0 & 0 \\
  0 & 1 &0\\
  0 & - \alpha & 1\\
  \end{array} \right).$$
  The group $G$ is thus isomorphic to a semi-direct product $\RR \ltimes
\RR^3$.

Observe that $H$ is a subgroup of $G$, corresponding to $\gamma=0$.
We have the following Liouville Theorem for $(G,{{\mathcal H}}^+)$:
\begin{lemme}
  \label{lem.liouville}
  Let $U$ and $V$ be two connected open subsets of ${{\mathcal H}}^+$,
and $f: U \to V$ a diffeomorphism such that $f_*C=C$ and
   $f_*(\lieh)=\lieh$. Then $f$ is the restriction of a unique element
of $G$.
\end{lemme}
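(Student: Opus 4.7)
The plan is to extract the form of $f$ in the coordinates $(x,y)$ in two stages, one per hypothesis. First, the condition $f_*C = C$, with $C = \delx$ the generator of horizontal translation, says that $f$ commutes with that flow, so writing $f(x,y) = (F_1(x,y), F_2(x,y))$ and using that $f$ intertwines translation in $x$ yields immediately
$$f(x,y) = (x + h(y),\; k(y))$$
for two one-variable functions $h,k$, with $k(y)>0$ on the relevant interval since $f(U) \subset \calh^+$. The problem is thereby reduced to pinning down $h$ and $k$.

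Next, I would impose that $f_*B$ and $f_*A$ each belong to $\lieh = \RR A \oplus \RR B \oplus \RR C$. The differential $Df(x,y) = \left(\begin{smallmatrix} 1 & h'(y)\\ 0 & k'(y)\end{smallmatrix}\right)$ is upper triangular, so the pushforwards are direct to compute. Writing $f_*B = a'A + b'B + c'C$ and evaluating both sides at the image point $(x+h(y),k(y))$ gives the two scalar equations $a'k(y) = 0$ and $b'k(y) + c' = y$; since $k>0$ this forces $a'=0$ and shows that $k$ is affine in $y$. Writing $f_*A = aA + bB + cC$ similarly yields $yk'(y) = a\,k(y)$ and $yh'(y) = b\,k(y) + c$. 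Feeding the affine form of $k$ into $yk' = ak$ forces the constant term of $k$ to vanish and $a=1$, so $k(y) = \lambda y$ with $\lambda = e^\alpha > 0$. The residual ODE for $h$ then becomes $h'(y) = b\lambda + c/y$, which integrates to $h(y) = \beta y + \gamma \ln y + \tau$, with $\beta = b\lambda$, $\gamma = c$, and $\tau \in \RR$ a constant of integration.

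This exhibits $f$ on $U$ as the restriction of the element of $G$ with parameters $(\alpha, \beta, \gamma, \tau)$, and those four parameters are read off uniquely from $h$ and $k$, which gives uniqueness at the same time as existence. I do not anticipate any substantial obstacle: the argument is essentially a short ODE calculation once the reduction $f(x,y) = (x+h(y), k(y))$ is in hand, and the structure of the group $G$ was tailor-made so that its four generators $\partial_\alpha, \partial_\beta, \partial_\gamma, \partial_\tau$ correspond exactly to the four degrees of freedom surviving after the constraints are solved. The only mild subtlety is that $U$ is assumed only connected and need not be a product, so one first solves the ODEs on small rectangles inside $U$; the parameters $(\alpha,\beta,\gamma,\tau)$ produced on overlapping rectangles must agree because they are determined pointwise by $f$ and its first derivatives, and then connectedness of $U$ upgrades this local agreement to the statement that $f$ coincides on all of $U$ with a single element of $G$.
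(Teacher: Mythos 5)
Your proposal is correct and follows essentially the same approach as the paper: reduce to the coordinate form $f(x,y) = (x + h(y),\, k(y))$ using $f_*C = C$, then constrain $h$ and $k$ by requiring $f_*A, f_*B \in \lieh$. The only cosmetic difference is that the paper exploits the fact that $f_*$ is an automorphism of $\lieh$ (so $f_*B$ lies in the derived subalgebra $\RR B$, and then $f_*A = A + cB + dC$ follows from $[f_*A, f_*B] = f_*B$) to shortcut the linear algebra, whereas you solve the general system of coefficient equations directly and arrive at the same conclusion.
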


\begin{preuve}
  Because $f$ preserves $C$, it is of the form $f(x,y)=(x + \eta(y),
\psi(y)).$
  Now $f$ also preserves $\lieh$, hence $f_*$ acts as an automorphism of
$\lieh$. In particular $f_*B=bB$, with $b \not = 0$.
   We get $\psi(y)=\frac{1}{b}y$, showing that $b>0$.  Set $b=e^{\alpha}$.
   Next,
$f_*(A)$ has the form $A+cB+dC$, from which we deduce
     $$ \eta(y)= \beta y + \gamma \ln(y) + \tau, \qquad \beta, \gamma,
     \tau \in \RR$$
\end{preuve}

\begin{corollaire}
  The surface $\Sigma$ is endowed with a $(G,{{\mathcal H}}^+)$-structure.
\end{corollaire}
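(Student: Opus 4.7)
The plan is to construct a $(G,\mathcal{H}^+)$-atlas on $\Sigma$ whose charts intertwine the pair $(X, \liezx)$ on $\Sigma$ with the pair $(C, \lieh)$ on $\mathcal{H}^+$, and then to invoke the Liouville lemma \ref{lem.liouville} to conclude that the transition maps automatically lie in $G$.

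The first step is to normalize the isotropy data. The isotropy at $x \in \Sigma$ is generated by $Z + c(x)X$, which is equivalent to the pointwise identity $Z(x) = -c(x) X(x)$. Consequently $c$ is an analytic, nowhere-vanishing function on the connected surface $\Sigma$, and so has constant sign by continuity. Since replacing $Z$ by $-Z$ preserves $[Y,Z]=Z$, I may assume $c < 0$ throughout $\Sigma$, and I set $y := -c > 0$, so that $Z = yX$ as vector fields on $\Sigma$. A short linear-algebra check shows that $X(x_0)$ and $Y(x_0)$ are linearly independent at every $x_0 \in \Sigma$: the kernel of the evaluation map $\liezx \to T_{x_0}\Sigma$ is the line $\RR(Z + c(x_0)X)$, which has trivial $Y$-component, so any dependence $\alpha X(x_0) + \beta Y(x_0) = 0$ forces $\alpha = \beta = 0$.

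For each $x_0 \in \Sigma$, I then define a local chart using the commuting flows of $X$ and $Y$ (available because $[X,Y]=0$):
\[
  \phi_{x_0}\bigl(\varphi_X^s \varphi_Y^t(x_0)\bigr) = \bigl(s,\; e^t y(x_0)\bigr) \in \mathcal{H}^+.
\]
This is a local diffeomorphism near $x_0$. The key verification is that $\phi_{x_0}$ intertwines $X$ with $C$ and $\liezx$ with $\lieh$. The identities $(\phi_{x_0})_*X = C$ and $(\phi_{x_0})_*Y = A$ are immediate from the definition, since $[C,A]=0$ on $\mathcal{H}^+$. The main technical point, which I expect to be the heart of the argument, is to show $(\phi_{x_0})_*Z = B$. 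This reduces to verifying that the function $y$ on $\Sigma$ transforms as $y(\varphi_Y^t(x)) = e^t y(x)$, which is a direct calculation from $Z = yX$, $[X,Y]=0$, and $[Y,Z] = Z$: pulling back $Z = yX$ by $\varphi_Y^t$ and using $(\varphi_Y^t)^*Z = e^t Z$ gives the stated transformation. Combined with the identity $B = yC$ on $\mathcal{H}^+$, this yields $(\phi_{x_0})_*(yX) = y \cdot C = B$, and hence $(\phi_{x_0})_*\liezx = \lieh$.

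Finally, given two such charts $\phi_{x_0}$ and $\phi_{x_0'}$ with overlapping domains, the transition $f = \phi_{x_0'} \circ \phi_{x_0}^{-1}$ is a local diffeomorphism of $\mathcal{H}^+$ satisfying $f_*C = C$ and $f_*\lieh = \lieh$. Lemma \ref{lem.liouville} then identifies $f$ with the restriction of a unique element of $G$, so the collection $\{\phi_{x_0}\}_{x_0 \in \Sigma}$ is a $(G,\mathcal{H}^+)$-atlas on $\Sigma$, as required.
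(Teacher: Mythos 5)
Your proof is correct and follows essentially the same approach as the paper's: you construct local charts $\psi: U \to V \subset \mathcal{H}^+$ intertwining the local $\liezx$-action with the pair $(C,\lieh)$ and then invoke Lemma \ref{lem.liouville} to place the transition maps in $G$. The paper's proof is terser, merely asserting the existence of such charts; you supply the explicit flow-box construction, the identity $Z=yX$ along $\Sigma$, and the verification $y\circ\varphi_Y^t=e^t y$ that makes it work.
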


\begin{preuve}
  Given an open subset $U \subset \Sigma$, denote $\liezx(U)$ the Lie
  algebra of all local conformal vector fields defined on $U$ commuting
  with $X$. 
  For each $x_0 \in \Sigma$, there exists a small neighborhood $U$
containing $x_0$, an open subset $V \subset {{\mathcal H}}^+$, and a
diffeomorphism
   $\psi: U \to V$ such that $\psi_*(\liezx(U))=\left. \lieh \right|_{V}$ and
$\psi_*(X)=C$. The corollary then
   follows from Lemma \ref{lem.liouville}.
\end{preuve}

  \begin{lemme}
   \label{lem.completeness}
   The $(G,{{\mathcal H}}^+)$-structure on $\Sigma$ is complete.
  \end{lemme}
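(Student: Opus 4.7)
The plan is to prove completeness by showing that the developing map $\delta : \tilde\Sigma \to \calh^+$ is a diffeomorphism; since $\calh^+$ is simply connected, this is equivalent to completeness of the structure. I will exploit two objects on $\calh^+$ that are $G$-invariant (verified from the explicit formulas): the vector field $C = \partial/\partial x$ and the closed 1-form $dy/y$. Their pullbacks under $\delta$ therefore descend to the given global vector field $X$ on $\Sigma$ and to a closed, nowhere-vanishing 1-form $\eta$ on $\Sigma$ whose kernel is spanned by $X$.

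First, because $X$ is complete on the compact $\Sigma$ while $C$ is complete on $\calh^+$, the map $\delta$ intertwines the complete flows of the lifted vector field $\tilde X$ and of $C$, and therefore sends each $\tilde X$-orbit diffeomorphically onto the horizontal line $\{y = y_0\} \subset \calh^+$ that is its image. To show $\delta$ hits every horizontal line, I would consider $\phi := \ln y \circ \delta$, a primitive of $\delta^*(dy/y)$ on the simply connected $\tilde\Sigma$. The equivariance of $\delta$ under deck transformations gives $\phi(\gamma \cdot \tilde p) = \phi(\tilde p) + \alpha(\gamma)$, where $\alpha : \pi_1(\Sigma) \to \RR$ is the period homomorphism of $\eta$. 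The image $\phi(\tilde\Sigma)$ is then a connected interval of $\RR$ invariant under translation by $\Lambda := \alpha(\pi_1(\Sigma))$. If $\Lambda$ were trivial, then $\phi$ would descend to a global submersion from compact $\Sigma$ to $\RR$, which is impossible by the extreme value theorem; hence $\Lambda \ne \{0\}$, and any interval invariant under a nontrivial subgroup of $\RR$ must equal $\RR$. This forces $\phi(\tilde\Sigma) = \RR$, so every horizontal line in $\calh^+$ lies in the image of $\delta$, and $\delta$ is surjective.

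For injectivity I would pull back the complete vector field $A = y\,\partial/\partial y$ on $\calh^+$ to $\tilde A := \delta^* A$ on $\tilde\Sigma$, which is well-defined because $\delta$ is a local diffeomorphism. The main technical step is to show that $\tilde A$ is complete. The $\tilde A$-integral curve $\tilde\gamma(t)$ through any $\tilde p_0$ satisfies $\delta(\tilde\gamma(t)) = (x_0, e^t y_0)$, which is defined for all $t \in \RR$; if the maximal existence time were $T_+ < \infty$, then the projection $\pi \circ \tilde\gamma$ would lie in the compact $\Sigma$ and accumulate at some $\bar q \in \Sigma$, and working in a local $(G, \calh^+)$-chart at $\bar q$, matched with $\delta$ up to a holonomy element, one locates a unique lift $\tilde q_+ \in \tilde\Sigma$ with $\delta(\tilde q_+) = (x_0, e^{T_+} y_0)$ to which $\tilde\gamma(t)$ converges. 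The flow then extends past $T_+$, contradicting maximality. Since $\tilde A$ and $\tilde X$ are complete, commute (as $[A, C] = 0$), and span the tangent space everywhere, they define a free, transitive action of $\RR \times \RR_{>0}$ on $\tilde\Sigma$ that $\delta$ intertwines with the corresponding action on $\calh^+$; hence $\delta$ is a diffeomorphism.

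The main obstacle will be establishing completeness of $\tilde A$: a continuous curve in the compact manifold $\Sigma$ can a priori accumulate on an entire continuum, so isolating a unique limit of $\pi \circ \tilde\gamma$ and matching it with a single preimage of $(x_0, e^{T_+} y_0)$ under $\delta$ requires careful use of the local $(G, \calh^+)$-chart structure and the holonomy homomorphism to rule out oscillation between distinct lifts.
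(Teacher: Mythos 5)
Your surjectivity argument is correct and pleasant, and it is genuinely different in flavor from the paper's: you use the $G$-invariant closed 1-form $dy/y$, whose pullback descends to a closed nowhere-vanishing 1-form $\eta$ on $\Sigma$, and a period-homomorphism argument to force $\phi(\tilde\Sigma) = \RR$. The paper instead uses the $G$-invariant degenerate quadratic form $h_0 = dy^2/y^2$, picks a vector field $W$ on (a 2-fold cover of) $\Sigma$ transverse to $X$ with $h(W,W) = 1$, and uses completeness of $W$ on the compact surface to show the developed $W$-trajectory crosses every horizontal line. The two arguments buy the same thing; yours arguably needs no 2-fold cover at this step since $dy/y$ is honestly $G$-invariant as a 1-form.

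The injectivity step, however, has a genuine gap, which you partially acknowledge. The vector field $A = y\,\partial/\partial y$ is \emph{not} $G$-invariant: for $g:(x,y)\mapsto (x+\beta y+\gamma\ln y+\tau, e^\alpha y)$ one computes $g_*(y\partial_y) = (e^{-\alpha}\beta\, y' + \gamma)\partial_{x'} + y'\partial_{y'}$, which is not $y'\partial_{y'}$ unless $\beta = \gamma = 0$. Consequently $\tilde A := \delta^* A$ is well-defined on $\tilde\Sigma$ but does \emph{not} descend to a vector field on the compact surface $\Sigma$. This means the projection $\pi\circ\tilde\gamma$ is not an integral curve of anything on $\Sigma$, and completeness of $\tilde A$ is not a consequence of compactness---it is, rather, exactly the kind of statement that can fail for incomplete $(G,\mathbf{X})$-structures and is circular to try to extract from local developing charts. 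Your sketch of locating ``a unique lift $\tilde q_+$'' near an accumulation point is exactly where a hypothetical incompleteness would manifest; there is no a priori bound on the speed of $\tilde\gamma$ in a background metric on $\tilde\Sigma$, so the curve may oscillate among sheets or escape to infinity even though its development converges. The paper avoids this entirely: it takes a vector field $W$ defined \emph{directly on the compact surface} $\Sigma$ (hence automatically complete) and normalized against the degenerate metric, and then shows the open set $\Omega = \{\varphi_{\tilde X}^t\varphi_{\tilde W}^s.\tilde x\}$ is mapped diffeomorphically by $\delta$ onto $\mathcal{H}^+$ and has empty boundary in $\tilde\Sigma$, forcing $\Omega = \tilde\Sigma$. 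To salvage your approach you should replace $\tilde A$ by the lift of a vector field chosen on $\Sigma$ itself, say any $W$ transverse to $X$ with $\eta(W) \equiv 1$ (which exists on a 2-fold cover), and then argue with the pair $(\tilde X,\tilde W)$ as the paper does.
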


  \begin{preuve}
   Let $\tilde{\Sigma}$ be the universal cover of $\Sigma$ and
    $\delta: \tilde{\Sigma} \to {{\mathcal H}}^+$ a developing map,
    with associated holonomy morphism
$\rho: \pi_1(\Sigma) \to G$.  By construction,
$\delta_*(\tilde{X})=C$, where
     $\tilde{X}$ is the lift of $X$ to $\tilde{\Sigma}$.
     In particular, the relation $\delta \circ
{\varphi}_{\tilde{X}}^t=\varphi_C^t \circ \delta$ shows that
$\delta(\tilde{\Sigma})$ is a union of lines $y \equiv c$, for $c \in \RR$. Thus
     $\delta(\tilde{\Sigma})$ is a horizontal  strip in ${\mathcal
       H}^+$.

     Next observe that $G$ preserves the degenerate metric
      $h_0= dy^2/y^2$ on ${{\mathcal H}}^+$, from which $\Sigma$
inherits a degenerate metric $h$.  On a $2$-fold cover of
      $\Sigma$, there is $W \in \mathcal{X}(\Sigma)$
transverse to $X$ and satisfying $h(W,W)=1$.
 For $\tilde{x} \in \tilde{\Sigma}$, the trajectory
$\delta(\varphi_{\tilde{W}}^t.{\tilde{x}})$ is a curve in ${{\mathcal
    H}}^+$ with velocity of constant $h_0$-length $1$,
      defined on $\RR$ since $W$ is complete on 
$\Sigma$.
       This curve
must cross every horizontal line in ${{\mathcal H}}^+$; we conclude
that $\delta(\tilde{\Sigma})={{\mathcal H}}^+$.  In fact, the open set
$\Omega=\{ {\varphi}_{\tilde{X}}^t \varphi_{\tilde{W}}^s.\tilde{x} \ | \ s,t \in
\RR \} \subset \tilde{\Sigma}$ is mapped diffeomorphically by
$\delta$ onto ${{\mathcal H}}^+$.  The boundary
         $\partial \Omega$ is empty. Indeed, if $x
\in \partial \Omega$, then there is $x' \in \Omega$
         satisfying $\delta(x')=\delta(x) =y$.  Disjoint neighborhoods of both $x$
         and $x'$ in $\tilde{\Sigma}$ map diffeomorphically under $\delta$ to a
         neighborhood of $y$.  On the other hand, both neighborhoods
         in $\tilde{\Sigma}$ intersect $\Omega$, which contradicts injectivity of $\delta$ on $\Omega$.
   We conclude that $\Omega= \tilde{\Sigma}$, and completeness follows.
        \end{preuve}

The developing map $\delta$ identifies $\pi_1(\Sigma)$ with a discrete subgroup of $G$, and by proposition \ref{prop.propertiesdim3} (3),
 this discrete group contains an index $2$ subgroup $\Lambda$ isomorphic to ${\bf Z}^2$. There must be in $\Lambda$
  an element $\gamma_0=(\alpha_0, \beta_0, \gamma_0, \tau_0)$ with $\alpha_0 \not = 0$, otherwise $\Lambda$ would preserve the lines 
  $y\equiv c$ in ${\mathcal H}^+$, and could not act cocompactly. Now it is 
  readily checked that the centralizer $L$ of $\gamma_0$ in $G$
   comprises elements of $G$ of the form 
   $\left( \alpha,
     \frac{\beta_0}{e^{\alpha_0}-1}(e^{\alpha}-1),\frac{\gamma_0}{\alpha_0}\alpha,\tau
     \right), \alpha, \tau \in \RR$.
   
 Hence $L$ is isomorphic to $\RR^2$, and $\{ \varphi_C^t \}$ acts by translations on the torus $L/ \Lambda$.
  Because $\overline{\{ \varphi_C^t \}}$ is compact in 
$L/\Lambda$, this would make $\{ \varphi_X^t \}$  relatively compact in
      $\operatorname{Diff}(\Sigma)$.  
      Then there are $t_k \rightarrow
      \infty$ and $x \in \Sigma$ such that  $\left\{ D_x (\left. \varphi^{t_k}_X
      \right|_{\Sigma}) \right\}$ and $\left\{ \left( D_x (\left. \varphi^{t_k}_X
      \right|_{\Sigma}) \right)^{-1} \right\}$ are bounded.  Since $\Sigma$ has codimension one, this implies boundedness of $D_x
      \varphi^{t_k}_X$ and $(D_x
      \varphi^{t_k}_X)^{-1}$.  Then there is a holonomy sequence for
      $\{ \varphi^{t_k}_X \}$ at $x$ contained in $P^+$, which implies conformal flatness by theorem 
       \ref{thm.translation.holonomy}. We have reached the desired contradiction.

\section{Case $\liezx$ is $2$-dimensional}
\label{sec.2dimensional}
The case in which $\liezx$ is $2$-dimensional, necessarily isomorphic
to $\RR^2$, is the most involved.  In subsection \ref{sec.global-action} below,
we show that $\liezx$ generates a global action of $S^1 \times \RR$ on
$M$.  The remaining subsections follow the main ideas of section 6 of
\cite{mp.confdambra}---also the most difficult part of the proof in that paper---to arrive at a contradiction.
The contradiction is with the standing assumptions that $\{
\varphi_X^t \}$ is essential and $(M,g)$ is not conformally flat.
From these assumptions we can immediately deduce the following 
facts about $\liezx$-orbits:
 
\begin{lemme}
 \label{lem.orbits-dim1}
 \begin{enumerate}
  \item There is no $\liezx$-orbit of dimension $0$.
  \item There exists a closed $\liezx$-orbit $\Sigma$ of dimension $1$
 \end{enumerate}

\end{lemme}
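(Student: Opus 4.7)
For part (1), the argument is immediate: if $x \in M$ is a $\liezx$-fixed point, then every element of $\liezx$ vanishes at $x$, so $\liezx \subseteq \mathfrak{Is}_X^{loc}(x)$; the reverse inclusion holds by definition of $\mathfrak{Is}_X^{loc}(x)$ as the Lie algebra of local conformal vector fields commuting with $X$ and vanishing at $x$. Hence $\mathfrak{Is}_X^{loc}(x) = \liezx \cong \RR^2$ is a $2$-dimensional abelian subalgebra of $\mathfrak{Is}^{loc}(x)$, and Corollary \ref{coro.isotropy-dim2} forces conformal flatness, contradicting the standing assumption of Section \ref{sec.2dimensional}.

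For part (2), I first invoke Theorem \ref{thm.gromov.stratification}(2), applied to any $\liezx$-orbit, to obtain a closed $\liezx$-orbit $\Sigma$. By part (1), $\dim \Sigma \geq 1$, and $\dim \Sigma \leq \dim \liezx = 2$. The task is to exclude $\dim \Sigma = 2$. Suppose it does hold. Then $\Sigma$ is a closed subset of the compact manifold $M$, hence compact, and being a $2$-dimensional orbit of the abelian algebra $\liezx$, the infinitesimal $\liezx$-stabilizer at each $x \in \Sigma$ has dimension $2-2=0$. Combined with the inclusion $\mathfrak{Is}_X^{loc}(x) \subseteq \liezx$, this forces $\mathfrak{Is}_X^{loc}(x) = 0$ for every $x \in \Sigma$.

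To derive a contradiction I exploit recurrence of $\{\varphi_X^t\}$ on $\Sigma$. Essentiality of $\{\varphi_X^t\}$ implies the flow is not relatively compact in $\mbox{Conf}(M,[g])$, since otherwise averaging over its closure would produce an invariant metric in $[g]$. I next apply Birkhoff's theorem on existence of minimal sets to the restriction of $\{\varphi_X^t\}$ to the compact invariant surface $\Sigma$; this yields $x \in \Sigma$ with $\varphi_X^{t_k}.x \to x$ along some sequence $t_k \to \infty$, i.e., a recurrent point for the ambient action of $\{\varphi_X^t\}$ on $M$. Proposition \ref{prop.recurrence} then asserts that $\mathfrak{Is}_X^{loc}(x)$ has noncompact identity component, hence dimension at least $1$, contradicting the vanishing just established. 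The main obstacle in executing this plan is precisely ensuring that the recurrent point supplied by Proposition \ref{prop.recurrence} lies inside the closed $2$-dimensional orbit $\Sigma$, rather than merely somewhere in $M$; reducing to a Birkhoff minimal set inside the compact invariant surface $\Sigma$ is what accomplishes this.
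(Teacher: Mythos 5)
Your proof is correct and uses the same three ingredients as the paper (Corollary \ref{coro.isotropy-dim2}, Proposition \ref{prop.recurrence}, Theorem \ref{thm.gromov.stratification}(2)), but assembles them in reverse order for part (2). The paper first takes a recurrent point of $\{\varphi_X^t\}$ anywhere in the compact $M$, where Proposition \ref{prop.recurrence} combined with part (1) forces the $\liezx$-orbit through that point to be exactly one-dimensional, and only afterward invokes the stratification theorem to see that this orbit is closed (the closed orbit in its closure has dimension at least $1$ by part (1), so must be the orbit itself). You instead extract a closed orbit $\Sigma$ first via Theorem \ref{thm.gromov.stratification}(2), and rule out $\dim\Sigma=2$ by producing a recurrent point \emph{inside} $\Sigma$ with a minimal-set argument. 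Your route works, but it carries the extra burden you flag yourself---guaranteeing recurrence inside the candidate invariant surface rather than just in $M$---whereas the paper's ordering sidesteps that issue entirely by applying recurrence on $M$ before committing to any particular orbit.
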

 \begin{preuve}
  Corollary \ref{coro.isotropy-dim2} rules out any $\liezx$-orbit of dimension $0$.
  Because $M$ is compact, there are recurrent points for $\{ \varphi_X^t \}$. At such points, the isotropy 
  is nontrivial by proposition \ref{prop.recurrence}, and the $\liezx$-orbit is of dimension $1$.  It is closed because of theorem \ref{thm.gromov.stratification} (2) and point (1).
 \end{preuve}

\subsection{Global conformal action on $M$}
\label{sec.global-action}

The result of this subsection is that, possibly after passing to a
finite cover of $M$, the vector fields in $\liezx$ are globally
defined, necessarily complete, generating a conformal action of a cylinder $Z_X$. 

\begin{proposition}
  \label{prop.global.R2}
After possibly replacing $M$ by a finite cover, every local conformal
vector field on $M$ commuting with $X$ extends to 
 a conformal vector field defined on all of $M$.  The resulting
 subalgebra of $\mathcal{X}^{conf}(M)$, isomorphic to $\liezx$,
 integrates to a subgroup group $Z_X \leq \mbox{Conf}(M)$ isomorphic
 to $S^1 \times \RR$, acting locally freely on 
   an open, dense subset of $M$.
\end{proposition}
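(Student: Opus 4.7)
The plan is to globalize $\liezx$ after passing to a finite cover of $M$ and integrate the result to the claimed $S^1 \times \RR$ action. Let $\pi:\tilde M \to M$ denote the universal cover and $\tilde X$ the lift of $X$. By Amores' theorem, every germ in $\liezx$ extends uniquely along paths in the simply-connected, real-analytic $\tilde M$ to a global real-analytic conformal vector field, yielding a 2-dimensional abelian Lie algebra $\tilde\liezx \subset \mathcal{X}^{conf}(\tilde M)$ commuting with $\tilde X$. The deck group $\Gamma = \pi_1(M)$ acts on $\tilde\liezx$ by pushforward automorphisms, fixing $\RR \tilde X$ pointwise since $X$ already descends to $M$.

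\emph{Construction of a periodic generator.} By Lemma \ref{lem.orbits-dim1} there is a closed 1-dimensional $\liezx$-orbit $\Sigma\subset M$; fix $x \in \Sigma$. The isotropy algebra $\mathfrak{Is}_X^{loc}(x)$ is 1-dimensional, spanned by some $Y_0 \in \liezx$ independent from $X$, and $\{\varphi_{Y_0}^t\}$ fixes $\Sigma$ pointwise. By theorem \ref{thm.fm.linearization}, applied under our standing assumption that $(M,g)$ is not conformally flat, $Y_0$ linearizes near $x$ so that $D_x \varphi_{Y_0}^t \in \CO(1,2)$, and since $Y_0$ commutes with $X$, this differential fixes $X(x) \neq 0$. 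A case analysis on the causal type of $X(x)$ excludes the spacelike and lightlike possibilities: in those cases $\{D_x\varphi_{Y_0}^t\}$ would be an unbounded, linear 1-parameter subgroup of $\CO(1,2)$ of either balanced or contracting type, and proposition \ref{prop.stable.flat} would force conformal flatness. Hence $X(x)$ is timelike and $\{D_x\varphi_{Y_0}^t\}$ sits inside the compact $\SO(2)$ stabilizing $X(x)$. Since conformal diffeomorphisms of a compact real-analytic Lorentzian manifold are determined by their 2-jet at one point, this periodicity propagates from $x$ to all of $\tilde M$: the global lift $\tilde Y_0 \in \tilde\liezx$ generates a genuine $S^1$-action on $\tilde M$.

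\emph{Bounding the monodromy.} For any $\gamma\in\Gamma$, the conjugate $\gamma \circ \varphi_{\tilde Y_0}^t \circ \gamma^{-1} = \varphi_{\gamma_*\tilde Y_0}^t$ is again periodic in $t$. Writing $\gamma_*\tilde Y_0 = a\tilde X + c_\gamma\tilde Y_0$ in the basis $(\tilde X,\tilde Y_0)$ of $\tilde\liezx$, commutativity of the two flows yields $\varphi_{a\tilde X + c_\gamma \tilde Y_0}^t = \varphi_{\tilde X}^{at}\varphi_{\tilde Y_0}^{c_\gamma t}$; periodicity, together with non-periodicity of $\{\varphi_{\tilde X}^s\}$ (inherited from the essential, hence noncompact, flow of $X$ on $M$), forces $a = 0$. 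Thus $\gamma_*\tilde Y_0 = c_\gamma\tilde Y_0$, and matching minimal periods of $\varphi_{\tilde Y_0}^{c_\gamma t}$ and $\varphi_{\tilde Y_0}^t$ gives $c_\gamma \in \{\pm 1\}$. The resulting character $c : \Gamma \to \{\pm 1\}$ is a homomorphism with kernel of index at most 2; passing to the corresponding double cover of $M$ (again denoted $M$), $\tilde Y_0$ descends as a global, real-analytic, complete conformal vector field $Y_0 \in \mathcal{X}^{conf}(M)$ commuting with $X$. The commuting flows of $X$ and $Y_0$ integrate to a connected abelian subgroup $Z_X \leq \Conf(M)$: $Y_0$ contributes an $S^1$-factor, $X$ contributes a noncompact $\RR$-factor, and the intersection $\{\varphi_X^s\} \cap \{\varphi_{Y_0}^t\}$ is trivial by the compact-versus-noncompact dichotomy, whence $Z_X \cong S^1 \times \RR$. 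Local freeness on an open dense subset follows from corollary \ref{coro.isotropy-dim2} (points of 2-dimensional isotropy would force conformal flatness) and the semianalyticity of orbits in theorem \ref{thm.gromov.stratification}.

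\emph{Main obstacle.} The crux is singling out the periodic $Y_0$ in the second paragraph: this demands the causal-type analysis of $X$ on $\Sigma$ and repeated invocation of the stability--flatness dichotomy of section \ref{sec.stability-flatness}. The subsequent bound $c_\gamma \in \{\pm 1\}$ works only because $\{\varphi_{\tilde Y_0}^t\}$ is periodic while $\{\varphi_{\tilde X}^s\}$ is not; without this periodicity there would be no structural reason to constrain the pushforward $\gamma_*\tilde Y_0$ to lie in $\RR\tilde Y_0$, and no finite cover would globalize $\tilde Y_0$.
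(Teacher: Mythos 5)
Your construction of the periodic generator contains a decisive error, and the rest of the argument collapses as a result.

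In the second paragraph you argue that if $X(x)$ is lightlike, then the 1-parameter isotropy $\{D_x\varphi_{Y_0}^t\}$, which fixes $X(x)$, ``would be an unbounded, linear 1-parameter subgroup of $\CO(1,2)$ of either balanced or contracting type,'' hence forces conformal flatness and is excluded. This is false. The stabilizer in $\CO(1,2)$ of a nonzero null vector is two-dimensional and is generated by a balanced hyperbolic element \emph{and} by a unipotent element; a 1-parameter subgroup inside it may perfectly well be unipotent, in which case propositions \ref{prop.balanced.flat} and \ref{prop.stable.flat} give no conclusion at all. So the lightlike case is not excluded. In fact the paper shows the opposite of your conclusion: in proposition \ref{prop.R2.orbits} the closed 1-dimensional orbits are precisely lightlike, with linear \emph{unipotent} isotropy, and lemma \ref{lem:1d.degenerate} rules out timelike and spacelike closed orbits. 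Once the timelike alternative is gone there is no compact $\SO(2)$-factor, no period for $Y_0$ coming from the isotropy, and hence no way to bound the monodromy character $c_\gamma$ to $\{\pm 1\}$ — your own ``Main obstacle'' paragraph correctly identifies this periodicity as the linchpin, but the mechanism you propose to obtain it cannot work.

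The paper's actual proof takes a different and more delicate route. It passes first to the cover $\tilde M' = \tilde M/\Gamma_0$ where $\Gamma_0 = \ker(\Gamma \to \Aut(\liezx))$, then proves in lemma \ref{lemme.tilde.sigma.compact} that the lift $\tilde\Sigma$ of a closed 1-dimensional orbit is still a circle (this requires a genuine argument using finiteness of components of isotropy groups, theorem \ref{thm.isotropy.algebraic}, and faithfulness of the $\Gamma'$-action — it is not automatic). It then constructs the periodic vector field in lemma \ref{lem.Y.periodic} not as an isotropy generator but as a suitable combination $Y = T_0 X + S_0 Z$ (or $Y = T_0 Z + S_0 X$ depending on whether $X$ vanishes on $\tilde\Sigma$), where the coefficients come from the compactness of $\tilde\Sigma$ and from the finite-index property of $\{\varphi_X^s\}$ inside $\mbox{Is}_X^{loc}$. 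Only after saturating a tubular neighborhood of $\tilde\Sigma$ by the $X$-flow does one get a global $Z_X$-action, and the bound $|\Gamma'| \le 2$ comes out of faithfulness of $\Gamma'$ on $\liezx$ plus the fact that $\rho(\Gamma')$ must centralize $\varphi^t_X$ for all $t$. Your final paragraph on local freeness and on $Z_X$ not being a torus is essentially correct and matches the paper, but it relies on the earlier flawed step.
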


\begin{preuve}
 Let $\tilde{M}$ be the universal cover of $M$, with group of deck
 transformations $\Gamma < \mbox{Conf}(\tilde{M})$.  Denote the lifts of $X$ and $\liezx$ to
  $\tilde{M}$ also by $X$ and $\liezx$. By Amores' theorem
  \cite{amores.killing} (see section \ref{sec.gromov.frobenius}), the
  lifts of
  $\liezx$ form a globally defined subalgebra of $\mathcal{X}^{conf}(\tilde{M})$.  Let $\Gamma_0$ be the kernel of the holonomy representation
  of $\Gamma$ on $\liezx$, and set $\tm'=\tm/\Gamma_0$.  We will again
  denote by $\liezx$ the corresponding subalgebra of
  $\mathcal{X}^{conf}(\tm')$.  The manifold $M$ is a quotient of $\tm'$
    by a group $\Gamma' \cong \Gamma/\Gamma_0$, and the holonomy representation of $\Gamma'$ on $\liezx$
     is faithful.
     
     Lift the closed orbit $\Sigma$
      to $\tm'$, and let $\tilde{\Sigma}$ be a connected component of this lift. It is the
  $\liezx$-orbit of a point $\tilde{x}_0$ in $\tm'$.
  
      \begin{lemme}
        \label{lemme.tilde.sigma.compact}
   The manifold $\tilde{\Sigma} \subset \tm'$ is a circle.
  \end{lemme}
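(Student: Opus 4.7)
My plan is to rule out $\tilde\Sigma \cong \RR$ by exploiting the faithfulness of the $\Gamma'$-action on $\liezx \cong \RR^2$ together with the finiteness of the component group of $\mbox{Is}_X^{loc}(\tilde{x}_0)$. First I would note that $\pi'|_{\tilde\Sigma}: \tilde\Sigma \to \Sigma$ is a covering map: $\tilde\Sigma$ is a connected component of $(\pi')^{-1}(\Sigma)$, and its image equals $\Sigma$ because $\liezx$-orbits project to $\liezx$-orbits. Since $\Sigma$ is a compact, connected $1$-submanifold of $M$, it is diffeomorphic to $S^1$, so $\tilde\Sigma$ is diffeomorphic to $S^1$ or to $\RR$.

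Assume for contradiction $\tilde\Sigma \cong \RR$. Since $\Gamma_0 \triangleleft \Gamma$, the cover $\tm' \to M$ is Galois with deck group $\Gamma'$, so fiber-transitivity yields $\gamma \in \Gamma'$ with $\gamma \tilde{x}_0 \in \tilde\Sigma \setminus \{\tilde{x}_0\}$. Such $\gamma$ preserves $\tilde\Sigma$ (because $\gamma\tilde\Sigma$ is a component of $(\pi')^{-1}(\Sigma)$ meeting $\tilde\Sigma$) and its restriction to $\tilde\Sigma \cong \RR$ is a nontrivial deck transformation of the cover $\tilde\Sigma \to S^1$, hence an infinite-order translation; in particular $\gamma$ has infinite order in $\Gamma'$. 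Because $\gamma \tilde{x}_0$ belongs to the $\liezx$-orbit of $\tilde{x}_0$, I would write $\gamma \tilde{x}_0 = \psi(\tilde{x}_0)$ for a finite composition $\psi = \varphi_{Y_1}^{t_1} \circ \cdots \circ \varphi_{Y_k}^{t_k}$ of local flows of elements of $\liezx$, defined near $\tilde{x}_0$. Then $\eta := \gamma^{-1} \circ \psi$ fixes $\tilde{x}_0$; it is conformal (composition of an isometry with a conformal map) and centralizes $X$ (the lift of $X$ to $\tm'$ is $\Gamma'$-invariant, so $\gamma$ commutes with $\varphi_X^t$, while each $Y_i$ commutes with $X$), hence $\eta \in \mbox{Is}_X^{loc}(\tilde{x}_0)$.

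The final step is to compute $\eta_*$ on $\liezx$ and derive a contradiction. Abelianness of $\liezx$ forces each $(\varphi_{Y_i}^{t_i})_*$ to act trivially on $\liezx$, so $\psi_* = \mathrm{id}$ and $\eta_* = (\gamma^{-1})_*$ in $\operatorname{Aut}(\liezx) \cong \GL(2,\RR)$. By theorem \ref{thm.isotropy.algebraic}, $\mbox{Is}_X^{loc}(\tilde{x}_0)$ has finitely many components, and its identity component, generated by the flows of $\mathfrak{Is}_X^{loc}(\tilde{x}_0) \subset \liezx$, acts trivially on $\liezx$ for the same abelian reason. Thus $\mbox{Is}_X^{loc}(\tilde{x}_0)$ acts on $\liezx$ through a finite quotient, and $\gamma_*$ is a finite-order element of $\GL(2,\RR)$. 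Since $\Gamma' \hookrightarrow \operatorname{Aut}(\liezx)$ is injective by the very definition of $\Gamma_0$, this forces $\gamma$ itself to have finite order in $\Gamma'$, contradicting the infinite order established above. The main delicate points will be verifying that $\eta$ genuinely lies in $\mbox{Is}_X^{loc}(\tilde{x}_0)$ (in particular that $\gamma$ commutes with $X$ and that the composition $\psi$ is well-defined in a neighborhood of $\tilde{x}_0$); the conceptual heart of the argument is the twofold use of the abelianness of $\liezx$ to trivialize both $\psi_*$ and the identity component of the isotropy action.
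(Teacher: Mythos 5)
Your proof is correct, and it streamlines the paper's argument in a way worth noting. The paper splits into two cases according to whether $X(\tilde{x}_0)=0$ or $X(\tilde{x}_0)\neq 0$, in each case singling out a specific flow ($\varphi^{T_0}_X$ or $\varphi^{T_0}_Z$) carrying $\tilde{x}_0$ to $\gamma\tilde{x}_0$, and then computing by hand that some power $\gamma^\ell$ coincides locally with $\varphi^1_Y$ for an explicit $Y\in\liezx$, from which $(\gamma^\ell)_*$ is trivial on $\liezx$. You replace the specific flow with an arbitrary finite composition $\psi$ of $\liezx$-flows reaching $\gamma\tilde{x}_0$ (which exists by the very definition of the $\liezx$-orbit), observe that abelianness of $\liezx$ already forces $\psi_*=\mathrm{id}$, and instead of computing a power of $\gamma$ explicitly you note that the whole of $\mbox{Is}_X^{loc}(\tilde{x}_0)$ acts on $\liezx$ through its finite component group (again by abelianness on the identity component, plus theorem \ref{thm.isotropy.algebraic}). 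This removes the case dichotomy and the auxiliary vector field $Y$ entirely. The underlying ingredients are the same as the paper's---finiteness of components of the local isotropy, abelianness of $\liezx$, and faithfulness of the $\Gamma'$-holonomy representation on $\liezx$---so this is not a fundamentally different proof, but it is a cleaner packaging of it. One point to make fully explicit when writing it out: $\gamma$ centralizes $X$ because $X$ descends to $M$ and $\gamma$ is a deck transformation of $\tm'\to M$, so $\eta=\gamma^{-1}\circ\psi$ really does lie in $\mbox{Is}_X^{loc}(\tilde{x}_0)$ and not merely $\mbox{Is}^{loc}(\tilde{x}_0)$; you flag this as a delicate point and the justification is exactly this.
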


  \begin{preuve}
  Assume, for the sake of contradiction, that  $\tilde{\Sigma}$ is diffeomorphic to $\RR$.  In this case, the stabilizer of 
  $\tilde{\Sigma}$ in $\Gamma'$ is a cyclic subgroup 
  $\Gamma_{\tilde{\Sigma}}' = \langle \gamma \rangle \cong {\bf Z}$.
First suppose $X(\tilde{x}_0) \neq 0$, which implies $X$ is nonvanishing on $\tilde{\Sigma}$.  Then there is $T_0 \not =0$ 
such that $\gamma \varphi^{T_0}_X \in
\mbox{Is}_X^{loc}(\tilde{x}_0)$.  This group has finitely-many
components by theorem \ref{thm.isotropy.algebraic},  so a power
$$(\gamma \varphi^{T_0}_X)^\ell =
\gamma^{ \ell} \varphi^{\ell T_0}_X = \varphi^{1}_Z$$
on a neighborhood $U$ of $\tilde{x}_0$, where $Z$
is a generator of $\mathfrak{Is}_X(\tilde{x}_0)$, and we assume
$\varphi^t_Z$ is defined on $U$ for all $t \in [0,1]$.  
Let  $Y = Z - \ell T_0 X$.  For $t \in [0,1]$ and $x \in U$, the composition
$\varphi^{-t T_0}_X \circ \varphi^t_Z(x)$ is well-defined. Because
$X$ and $Z$ commute, $\varphi^t_Y(x) = \varphi^{-t T_0}_X \circ
\varphi^t_Z(x)$, and now
$$ \left. \varphi^1_Y \right|_U = \left. \gamma^k \right|_U.$$
Then $(\gamma^k)_* Y = Y$ on $U$, which implies by analyticity that
$(\gamma^k)_*Y=Y$ on $\tm'$.  Because $\Gamma'$ fixes $X$ and $Y$ is
independent of $X$, the element $\gamma^k \neq 1$ would centralize $\liezx$,
contradicting faithfulness of $\Gamma'$ on $\liezx$.

Next suppose $X(\tilde{x}_0) = 0$, which implies $X \equiv 0$ on $\tilde{\Sigma}$, and let $Z \in \liezx \backslash \RR X$.  Such $Z$ is nonvanishing by corollary \ref{coro.isotropy-dim2}. 
There is $T_0 \not = 0$
such that $\varphi^{T_0}_Z(\tilde{x}_0) = \gamma(\tilde{x}_0)$, and 
$\varphi^t_Z$ is defined on a neighborhood $U$ of $\tilde{x}_0$ for
all $t \in [0,T_0]$.  Then $\gamma^{-1} \varphi^{T_0}_Z \in
\mbox{Is}_X^{loc}(\tilde{x}_0)$ on $U$.  Again by theorem
\ref{thm.isotropy.algebraic}, the latter group contains $\{
\varphi^s_X \}$ as a finite-index subgroup.  Thus, for some $k \in 
\BZ$ and $S_0 \in \RR$,
$$ (\gamma^{-1} \circ \varphi^{T_0}_Z)^k = \varphi^{S_0}_X$$
on a neighborhood of $\tilde{x}_0$. 

Because $[X,Z] = 0$, the differential $D_{\tilde{x}_0} \varphi^{S_0}_X
(Z(\tilde{x}_0)) = Z(\tilde{x}_0)$.  Next 
$$D_{\tilde{x}_0} (\gamma^{-1} \circ \varphi^{T_0}_Z) (Z(\tilde{x}_0))
= \pm Z(\tilde{x}_0) = (\gamma^{-1}_* Z)(\tilde{x}_0)$$
Then $\gamma_*(Z) = \pm Z + \beta X$ for some
$\beta \in \RR$.  If $\gamma_*(Z)$ were congruent to $- Z$ modulo $\RR
X$, then, given that $\varphi^{-T_0}_Z \gamma(\tilde{x}_0) =
\tilde{x}_0$, we would have
$$ \gamma^{-1}( \tilde{x}_0) = \gamma^{-1} \varphi^{-T_0}_Z
\gamma(\tilde{x}_0) = \varphi^{T_0}_Z(\tilde{x}_0) = \gamma(\tilde{x}_0)$$
which is absurd, since $\Gamma$ acts freely and $\gamma$ has infinite order.  Thus $\gamma_*(Z) = Z + \beta X$.
Now there is $S_1$ such that
$$  (\gamma^{-1} \circ \varphi^{T_0}_Z)^k = \gamma^{-k}
\varphi^{S_1}_X \varphi^{T_0}_Z
 \in \mbox{Is}^{loc}_X(\tilde{x}_0)$$
Thus, for some $S_2$,
$$ \gamma^k = \varphi^{S_2}_X  \varphi^{ T_0}_Z$$
on $U$.  Let $Y = S_2 X + T_0 Z$.  As
above, the flow along $Y$ is well-defined on $U$ for $t \in [0,1]$, and 
$$ \left. \varphi^1_Y \right|_U = \left. \gamma^k \right|_U.$$
 This leads again to $(\gamma^k)_*Y=Y$ on $\tm'$, yielding a contradiction with the faithfulness of the action of $\Gamma'$ on $\liezx$.
 \end{preuve}
 
 Next we show that $\liezx$ globalizes on a neighborhood of the closed
 orbit $\tilde{\Sigma}$.

  \begin{lemme}
    \label{lem.Y.periodic}
    There exists $Y \in \liezx$, and $\tilde{N}$ an open neighborhood  of $\tilde{\Sigma}$, such that for all $y \in \tilde{N}$,
     the orbit $\{ \varphi_Y^t.y \}$ is defined on $\RR$, included in $\tilde{N}$, and $1$-periodic.
      \end{lemme}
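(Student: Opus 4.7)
The plan is to find a generator $Y \in \liezx$ whose time-$1$ map on $\tilde{\Sigma}$ is the identity, then to modify $Y$ by an isotropy element so that $\varphi^1_Y$ is the identity on an entire open subset of $\tilde{M}'$; analyticity will then force $\varphi^1_Y = \mbox{Id}$ throughout a neighborhood $\tilde N$ of $\tilde{\Sigma}$, making every orbit $1$-periodic. The main technical point is the combination of the algebraic isotropy structure from theorem \ref{thm.isotropy.algebraic} with analytic continuation of $\varphi^1_Y - \mbox{Id}$.

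First I would choose a specific basis of $\liezx$ adapted to $\tilde{\Sigma}$. Since $\liezx \cong \RR^2$ acts abelian and with a $1$-dimensional orbit through $x_0 \in \tilde{\Sigma}$, the evaluation map $\liezx \to T_{x_0}\tilde{M}'$ has a $1$-dimensional kernel, spanned by some $Z \in \liezx$. Because $\liezx$ is abelian, $Z$ is invariant under every other flow in $\liezx$, hence it vanishes along the entire orbit $\tilde{\Sigma}$. Picking any complement $Y_0 \in \liezx$ of $\RR Z$, the field $Y_0$ is nonvanishing on the circle $\tilde{\Sigma}$ and tangent to it, so after rescaling I may assume $\varphi^1_{Y_0}$ fixes every point of $\tilde{\Sigma}$.

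The next step is to upgrade $\varphi^1_{Y_0}|_{\tilde{\Sigma}} = \mbox{Id}$ to $\varphi^1_Y = \mbox{Id}$ on an open set, for a suitably modified $Y$. At $x_0$, the transformation $\varphi^1_{Y_0}$ lies in $\mbox{Is}_X^{loc}(x_0)$, whose Lie algebra is $\RR Z$ and whose identity component is $\{\varphi^s_Z\}$. By theorem \ref{thm.isotropy.algebraic}, $\mbox{Is}_X^{loc}(x_0)$ has finitely many components, so there exists an integer $k \geq 1$ with $\varphi^k_{Y_0} = (\varphi^1_{Y_0})^k$ in the identity component; hence $\varphi^k_{Y_0} = \varphi^{s_0}_Z$ as germs at $x_0$, for some $s_0 \in \RR$. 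Since $Y_0$ and $Z$ commute, setting $Y := kY_0 - s_0 Z \in \liezx$ gives $\varphi^1_Y = \varphi^k_{Y_0}\varphi^{-s_0}_Z = \mbox{Id}$ on a neighborhood of $x_0$. Because $\varphi^1_Y$ is analytic wherever defined, it must equal $\mbox{Id}$ on the connected component of its domain containing $x_0$.

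Finally, $Y|_{\tilde{\Sigma}} = kY_0|_{\tilde{\Sigma}}$ is nonvanishing and generates a $1$-periodic flow on the compact circle $\tilde{\Sigma}$, so by standard flow theory there is an open neighborhood $\tilde N_0$ of $\tilde{\Sigma}$ on which $\varphi^t_Y$ is defined for every $t \in [0,1]$ and $\tilde N_0$ lies in the connected component of the domain of $\varphi^1_Y$. Analytic continuation yields $\varphi^1_Y = \mbox{Id}$ on $\tilde N_0$, hence all $Y$-orbits through $\tilde N_0$ are $1$-periodic and extend to all $t \in \RR$. Replacing $\tilde N_0$ by the $\{\varphi^t_Y\}$-invariant open set $\tilde N = \bigcup_{t \in \RR}\varphi^t_Y(\tilde N_0')$ for a small tubular neighborhood $\tilde N_0' \subset \tilde N_0$ of $\tilde{\Sigma}$ then gives a neighborhood with the required properties. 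The anticipated subtlety lies in the component-count step: one cannot \emph{a priori} guarantee that $\varphi^1_{Y_0}$ itself sits in the identity component of the isotropy, which is precisely why the power $k$ and the correction by $Z$ are needed; the rest is analytic continuation on a compact orbit.
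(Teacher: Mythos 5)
Your proof is correct and follows essentially the same strategy as the paper: identify a field $Y_0 \in \liezx$ whose time-one map fixes $\tilde{\Sigma}$ pointwise, place $\varphi^1_{Y_0}$ in $\mbox{Is}_X^{loc}(x_0)$, use the finite component count from theorem \ref{thm.isotropy.algebraic} to correct by an isotropy generator so that a suitable $Y$ satisfies $\varphi^1_Y = \mbox{Id}$ as a germ at $x_0$, and then spread this identity over a neighborhood of $\tilde{\Sigma}$. The only organizational difference is that the paper splits into two cases according to whether $X(\tilde{x}_0) = 0$ (so that $X$ itself is the isotropy generator, and the periodic generator is some $Z$) or $X(\tilde{x}_0) \neq 0$ (so that $X$ is the periodic generator on $\tilde{\Sigma}$ and the isotropy is spanned by another $Z$), whereas you treat both cases at once by working abstractly with the kernel $Z$ of the evaluation map and a transverse $Y_0$, noting that $Z$ vanishes along all of $\tilde{\Sigma}$ because $\liezx$ is abelian. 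Your final step of first choosing a connected $\tilde N_0$ on which $\varphi^t_Y$ is defined for $t \in [0,1]$, invoking analyticity to get $\varphi^1_Y = \mbox{Id}$ there, and then saturating a smaller $\tilde N_0'$ is cleaner than the paper's reliance on completeness of $X$ in the second case and matches the paper's saturation construction $\tilde N = \bigcup_t \varphi^t_Y(U)$.
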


      \begin{preuve}
        If $X(\tilde{x}_0) \neq 0$, then $\{ \varphi^t_X .\tilde{x}_0 \}$ is
  periodic.  Because $\mbox{Is}_X^{loc}(\tilde{x}_0)$ has finitely-many
components, there are $T_0>0$, $S_0 \in \RR$ such that $\varphi^{T_0}_X \circ
\varphi^{S_0}_Z$ is trivial on a neighborhood of $\tilde{x}_0$, where
$Z$ generates $\mathfrak{Is}_X(\tilde{x}_0)$.   Note that, for any
$S$, the flow $\{ \varphi^s_Z \}$ is defined for $s \in [0,S]$ in some
neighborhood of $\tilde{x}_0$, and it commutes with $\{ \varphi^t_X \}$.  Now $Y = T_0 X + S_0 Z \in \liezx$ generates a periodic flow $\{ \varphi^{rT_0}_X \circ \varphi^{r S_0}_Z \}$, defined for $r \in \RR$ in a neighborhood $U$ of
$\tilde{x}_0$.  We can then put  $\tilde{N}:= \bigcup_{t \in \RR} \varphi^t_Y(U)$, which is indeed an open neighborhood
 of $\tilde{\Sigma}$ satisfying the conclusions of the lemma.

If $X$ vanishes on $\tilde{\Sigma}$, then by 
corollary \ref{coro.isotropy-dim2}, any $Z \in \liezx
\backslash \RR X$ does not vanish on $\tilde{\Sigma}$.
Then $\bar{Z} = \left. Z \right|_{\tilde{\Sigma}}$ is nonvanishing and
complete because $\tilde{\Sigma}$ is compact by lemma \ref{lemme.tilde.sigma.compact}.  Let $T_0 >0$ be such that $\varphi_{\bar{Z}}^{T_0}.\tilde{x_0} = \tilde{x}_0$.  The flow along $Z$ is defined on $[0,T_0]$ in a
neighborhood $\tilde{N}$ of $\tilde{\Sigma}$.  By theorem
\ref{thm.isotropy.algebraic}, $\{ \varphi^s_X \}$ has finite index in
$\mbox{Is}_X^{loc}(\tilde{x}_0)$ and is
noncompact because it is an essential flow.   After replacing $T_0$ by a finite
integer multiple, and shrinking $\tilde{N}$ if necessary, 
$\varphi_{Z}^{T_0} \varphi^{S_0}_X$ will be trivial in
restriction to $\tilde{N}$, for some $S_0 \in \RR$.  Because $X$ is complete
and $[Z,X]=0$, the flow along $Y = T_0 Z + S_0 X$ restricted to $\tilde{N}$ is periodic with
period $1$ and complete. Again,  $\tilde{N}:= \bigcup_{t \in \RR} \varphi^t_Y(U)$ 
 is  an open neighborhood
 of $\tilde{\Sigma}$ satisfying the conclusions of the lemma.
\end{preuve}

Let $\Omega = \bigcup_{t \in \RR}  \varphi^t_X .\tilde{N}$, an open set; let
$Y$ be as in lemma \ref{lem.Y.periodic}.  For every $y \in \Omega$,  the  pseudo-orbit $\{ \varphi_Y^t.y \}$
 is defined for every $t \in \RR$, included in $\Omega$, and
 $1$-periodic.  Any $Z \in \liezx$ equals $r_0 X + s_0 Y$, for some
 $r_0$ and $s_0$, so for every $y \in \tm'$, we have $\varphi_Z^t.y=\varphi_X^{tr_0} \circ \varphi_Y^{ts_0}.y$, for all $t$ such that the
  expression is defined. For $y \in \Omega$, in particular,
  $\varphi_Z^t.y$ is defined for every $t \in \RR$ and lies in $\Omega$.
  All vector fields of $\left. \liezx\right|_{\Omega}$ are thus complete, defining an action of a $2$-dimensional  abelian Lie group
   $Z_X$
    on $\Omega$.  For the time being, we consider $Z_X$ as a subgroup of $\Conf(\Omega)$.
    Since the flow $\{ \varphi_Y^t \}$ is cyclic on $\Omega$, we infer that $Z_X$ is isomorphic  to a cylinder $S^1 \times \RR$
     or a torus ${\bf T}^2$.
     The latter possibility would mean that $\overline {\{ \varphi_X^t \} }$ is compact in $\Conf(\Omega)$, 
      hence in $\Conf(\tm')$, and finally in $\Conf(M)$,
      contradicting the essentiality hypothesis. 
      
      Now assume $Z_X$ is a cylinder.  There exists
      a homomorphism $\rho: \Gamma' \to \Aut(Z_X)$ integrating the representation $\Gamma' \to \Aut(\liezx)$.
      Indeed, for $\gamma \in \Gamma'$ and $Z \in \liezx$
      the local flows 
      $\{ \gamma \varphi_Z^{s} \gamma^{-1} \}$ and $
      \{\varphi_{\gamma_* Z}^{s} \}$ coincide on $\Omega$; in particular, 
      $ \gamma \varphi_Z^{1} \gamma^{-1}$ is well-defined on $\Omega$ and belongs to $Z_X$.
          
  Because $\rho(\Gamma')$ fixes $\varphi^t_X$ for all $t$, it follows
  that $|\rho(\Gamma')| \leq 2$.  Because $\rho$ is faithful,
  $|\Gamma'| \leq 2$.
    Then $\tm'$ is compact, and all vector fields of $\liezx$ are
    complete.  Thus $\liezx$ 
   integrates to the action of a group $Z_X \cong S^1 \times \RR$, as claimed in the proposition.
    
    We replace $M$ by $\tm'$ in the sequel.
   To complete the proof of the proposition, it remains to check that the action of $Z_X$ is locally free on a dense open subset
    of $M$. If not, there would be a nonempty open subset $U$ on
   which  all $Z_X$-orbits have dimension $1$.  The identity component of the isotropy group 
$\mbox{Is}_X^{loc}(x_0)$ at a point $x_0 \in U$ would fix a nonzero vector $v$
tangent to the orbit and act trivially on the quotient $T_{x_0}
M/\RR v$.  Basic linear algebra shows that the differential of the
isotropy at $x_0$, identified with a subgroup of $\mbox{CO}(1,2)$,
must be trivial in this case.  On the other hand, theorem
\ref{thm.fm.linearization} says, given our assumption on $(M,g)$, that
the isotropy must be linearizable, yielding a contradiction.
    \end{preuve}

Now we replace $M$ by the finite covering given by the above proposition.   
The  remainder of this section follows the arguments of section 6 of
  \cite{mp.confdambra}, with some simplifications available in
  dimension three.  We obtain directly from the
  $Z_X$-action a foliation by degenerate surfaces on an open, dense
  subset $\Omega \subset M$ in section \ref{sec.description-orbits}.  The leaves are projections of
  leaves of an integrable
  distribution in a reduction of the Cartan bundle over $\Omega$ (section \ref{sec.geometric-2dim}).
  Then we appeal to \cite{mp.confdambra} to better understand how
  $2$-dimensional orbits can accumulate on closed orbits
  (section \ref{sec.accumulation}).  A contradiction
  ultimately results from the fact that only finitely  many
  $1$-dimensional orbits can attract 2-dimensional orbits, while the
  $2$-dimensional orbits should accumulate on uncountably many
  distinct $1$-dimensional orbits
  (section \ref{sec.stratification-conclusion}).

\subsection{Description of the $Z_X$-orbits}
\label{sec.description-orbits}

\subsubsection{All orbits are degenerate}
  
\begin{proposition}
\label{prop.R2.orbits}
  There are two types of $Z_X$-orbits in $M$, both of which occur:
  \begin{enumerate}
    \item Circular lightlike orbits, with linear, unipotent isotropy.
      These form an analytic subset $\Sigma$ of $M$.
  \item Cylindrical orbits on which the metric is degenerate.  Each of
    these contains an orbit of type (1) in its closure.  These fill an
    open, dense subset $\Omega_f$ of $M$.
    \end{enumerate}
  \end{proposition}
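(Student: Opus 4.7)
My plan is to determine the $Z_X$-orbit structure using the $S^1 \times \RR$ presentation of $Z_X$ (proposition \ref{prop.global.R2}), the linearization theorem \ref{thm.fm.linearization}, and the curvature-vanishing criteria of section \ref{sec.curvature.vanishing}, working under the standing assumption that $(M,g)$ is not conformally flat. Let $Y$ generate the $S^1$-factor, so $\liezx = \RR X \oplus \RR Y$. Lemma \ref{lem.orbits-dim1} rules out 0-dim orbits, and proposition \ref{prop.global.R2} provides an open dense subset $\Omega_f$ of 2-dim orbits. A closed 1-dim $Z_X$-orbit is a compact connected 1-submanifold, hence a circle.

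For 1-dim orbits I first pin down the linear isotropy. At $x$ on such an orbit $\mathcal{O}$, the $Z_X$-stabilizer is a closed 1-parameter subgroup generated by some $W = aX + bY \in \liezx$ with $W(x) = 0$, and theorem \ref{thm.fm.linearization} yields a linearization of $\{\varphi_W^t\}$ near $x$ whose image is a 1-parameter subgroup of $G_0 = \mbox{CO}(1,2)$ fixing the tangent direction to $\mathcal{O}$. I then classify these subgroups by the causal type of this tangent direction and rule out the non-lightlike cases: spacelike tangent produces a balanced or contracting linear holonomy, so proposition \ref{prop.stable.flat} forces conformal flatness; timelike tangent produces a compact $\mbox{SO}(2)$ linear isotropy, from which linearizability forces local periodicity of $\{\varphi_W^t\}$, incompatible either with the non-compactness of the stabilizer subgroup in $S^1 \times \RR$ or, via iterates with holonomy in $P^+$, with theorem \ref{thm.translation.holonomy}; a lightlike tangent admits only the unipotent linear isotropy, since the alternative split-Cartan 1-parameter subgroup in the 2-dim stabilizer of a null direction is again balanced linear and hence ruled out by proposition \ref{prop.stable.flat}. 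This establishes type (1). The union $\Sigma$ of 1-dim orbits is the vanishing locus of the analytic bivector $X \wedge Y$ on $M$, hence an analytic subset.

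For 2-dim orbits in $\Omega_f = M \setminus \Sigma$, I obtain degeneracy of the induced metric by averaging the periodic $Y$-flow to produce $g' \in [g]$ invariant under $\{\varphi_Y^t\}$; then $g'(Y, Y)$ is $Y$-invariant, and the conformality of $X$ implies it varies only by a positive factor along each $X$-orbit, so it has constant sign on each $Z_X$-orbit. By theorem \ref{thm.gromov.stratification}(2), the closure $\overline{\mathcal{O}_2}$ of any 2-dim orbit contains a closed $Z_X$-orbit, which must be 1-dim (a closed 2-dim orbit would be a connected component of $M$, contradicting nonemptiness of $\Sigma$) and hence of type (1). On this type-(1) orbit $Y$ is lightlike or zero, forcing $g'(Y,Y) = 0$ there, and by the constant-sign property $g'(Y,Y) \equiv 0$ throughout $\mathcal{O}_2$. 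The same accumulation rules out compactness of $\mathcal{O}_2$, so $\mathcal{O}_2 \cong Z_X / \Gamma_0 \cong S^1 \times \RR$ with $\Gamma_0$ a finite cyclic rotation quotient of the $S^1$-factor, yielding type (2).

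The main obstacle, handled by adapting the arguments of \cite[Sec 6]{mp.confdambra} to the present 3-dimensional setting, is the accumulation step—that every 2-dim orbit has a type-(1) orbit in its closure (and hence is non-compact)—together with the careful treatment of the edge case in the 1-dim analysis where the $Z_X$-stabilizer is exactly the compact $S^1$-factor, where the contradiction requires the separate appeal to theorem \ref{thm.translation.holonomy} rather than to proposition \ref{prop.stable.flat}.
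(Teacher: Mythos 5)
Your plan has two serious gaps, and they are connected by a logical-ordering problem.

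\textbf{The spacelike 1-dimensional case.} You claim that a spacelike tangent to a 1-dimensional orbit yields a balanced or contracting linear isotropy, to which proposition \ref{prop.stable.flat} applies. This is incorrect. The stabilizer in $G_0 = \mbox{CO}(1,2)$ of a nonzero spacelike vector contains no conformal scaling (scaling cannot fix a nonzero vector), so it is conjugate to $\{\mbox{diag}(e^t,1,e^{-t})\}$. In the paper's coordinates $(\alpha,\beta)$ on $\liea$, an unbounded sequence in this group has $\alpha \equiv 0$ (bounded) while $\beta \to -\infty$: this is \emph{bounded distortion} in the sense of definition \ref{def:holonomy_trichotomy}, not balanced or contracting, and proposition \ref{prop.stable.flat} does not apply. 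The paper instead excludes spacelike 1-dimensional orbits in lemma \ref{lem:1d.degenerate}, by an exponential-map computation showing some $Z \in \liezx$ would be timelike on a nonempty open set, which \emph{depends on having already established that all 2-dimensional orbits are degenerate}. Your proposal tries to settle the 1-dimensional classification before addressing 2-dimensional degeneracy, so this route is unavailable.

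\textbf{The degeneracy of 2-dimensional orbits.} Your averaging construction of a $\{\varphi_Y^t\}$-invariant representative $g' \in [g]$ is fine, and it is true that $g'(Y,Y)$ has constant sign on each individual $Z_X$-orbit. But the conclusion that $g'(Y,Y) \equiv 0$ on a 2-dimensional orbit $\mathcal{O}_2$ does not follow from $g'(Y,Y) = 0$ on a boundary 1-dimensional orbit: the boundary orbit is a \emph{different} $Z_X$-orbit, and nothing prevents $g'(Y,Y)$ from being strictly positive (or negative) throughout $\mathcal{O}_2$ while tending to zero at the boundary. Worse, even if you had $g'(Y,Y) \equiv 0$ on $\mathcal{O}_2$, that alone does not establish degeneracy of the induced metric: in the basis $(X,Y)$ of $T\mathcal{O}_2$ the determinant is $g'(X,X)g'(Y,Y) - g'(X,Y)^2 = -g'(X,Y)^2$, so you would still need $g'(X,Y) \equiv 0$. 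The paper's argument instead proceeds via holonomy sequences: proposition \ref{prop:as_for_inv_sections} shows $\omega_{\hat x}(\liezx)$ lies, modulo $\liep$, in the approximately stable subspace of the holonomy; proposition \ref{prop.balanced.flat} then rules out contracting and balanced types on the Cotton-nonvanishing set, leaving bounded distortion or mixed; for these the approximately stable plane in $\lieg/\liep$ is degenerate. That is the genuine geometric input, and I see no elementary substitute for it in your proposal.

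In short: establish the degeneracy of 2-dimensional orbits by the holonomy argument first, and only then invoke lemma \ref{lem:1d.degenerate} to exclude spacelike (and timelike) 1-dimensional orbits. Reversing the order, as you do, leaves you without the tools for either step. (The timelike exclusion via theorem \ref{thm.translation.holonomy} and the identification of $\Sigma$ as the vanishing locus of $X \wedge Y$ are both fine.)
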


  \begin{preuve}
    By corollary \ref{cor.small-stabilizer}, there are no $Z_X$-fixed
points; moreover, all 1-dimensional orbits are closed---otherwise,
there would be a fixed point in the closure by theorem \ref{thm.gromov.stratification} (2).
The closed, lightlike orbits are precisely the zero-set of the
analytic function, given by a choice of metric $g$ in the conformal
class and $Y \in \liezx \backslash \RR X$ by
$$ \varphi(x) = g_x(X,X)^2 + g_x(Y,Y)^2 + g_x(X,Y)^2$$

Since we assume that $(M,g)$ is not conformally flat, the elements of the isotropy algebra 
 can be assumed linearizable by theorem
\ref{thm.fm.linearization}.  In a 1-dimensional, lightlike orbit,
the $Z_X$-isotropy fixes a lightlike tangent vector.
Such isotropy easily seen to be balanced or unipotent.  In the first
case, the isotropy is stable
and leads to conformal flatness by
proposition \ref{prop.stable.flat}.  We have
proved all the claimed properties of orbits of type (1).

Let $\Omega_f$ be the set on which $Z_X$ acts locally freely; it is
open and dense by proposition \ref{prop.global.R2}.  If there were a
closed, 2-dimensional orbit, then $\{ \varphi^t_X \}$ would have a
recurrent point on this orbit,
contradicting proposition \ref{prop.recurrence}.
Thus every 2-dimensional orbit is not closed, and, by theorem \ref{thm.gromov.stratification} (2), contains a closed
1-dimensional orbit in its closure.

Now we focus on the linear part of a holonomy sequence $\{ p_k \}$ for
an unbounded sequence $\{ h_k \} \subset Z_X$ at
$x \in \Omega_f$.
By proposition \ref{prop:as_for_inv_sections}, the subspace $\liezx(x)
\subset T_x M$
is approximately stable for $\{ D_x h_k \}$, because
$Z_X$ centralizes $\liezx$.  This means that $\omega_{\hat{x}}(\liezx)$ belongs
modulo $\liep$ to
$(\lieg/\liep)^{AS}(p_k)$, for any $\hat{x} \in \pi^{-1}(x)$.  Assuming $p_k$ is in $A'P^+$-form, the
presence of a 2-dimensional approximately stable subspace in
$\lieg/\liep$ makes it contracting, balanced, mixed, or of bounded
distortion, as in definition \ref{def:holonomy_trichotomy}.

Let $\Omega \subseteq \Omega_f$ be the open subset where the Cotton
tensor is nonzero; by our standing assumption, it is also dense.  For $x \in \Omega$, a holonomy sequence $\{ p_k \}$
as above is of
bounded distortion or mixed type by proposition
\ref{prop.balanced.flat}.  For these types, $(\lieg/\liep)^{AS}(p_k)$ is a
degenerate plane.  Thus all orbits in $\Omega$ are degenerate.  On the
other hand, if a point $x \in \Omega_f$ has Riemannian or Lorentzian orbit,
then so do the points in a neighborhood of $x$.  Since $\Omega$ is
open and dense, we conclude that all orbits of $\Omega_f$---that is,
all 2-dimensional orbits---are degenerate.


We rule out nondegenerate orbits of dimension 1 in lemma
\ref{lem:1d.degenerate} directly below, which completes the proof.
 \end{preuve}

\begin{lemme}
  \label{lem:1d.degenerate}
There are no 1-dimensional spacelike or timelike orbits.
\end{lemme}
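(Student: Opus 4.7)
Suppose for contradiction that some $1$-dimensional $Z_X$-orbit $\mathcal O$ through a point $x$ has nondegenerate tangent. Corollary \ref{coro.isotropy-dim2} forces $\mathfrak{Is}_X^{loc}(x) = \RR Z$ for a nonzero $Z \in \liezx$ with $Z(x)=0$. Writing $\liezx = \RR X \oplus \RR Y$, the nonzero tangent to $\mathcal O$ at $x$ is $v := X(x)$ when $X(x)\neq 0$ and $v := Y(x)$ otherwise. Since $Z$ commutes with both $X$ and $Y$, the linear part $D_x\varphi^t_Z$ fixes $v$. By Theorem \ref{thm.fm.linearization}, the standing hypothesis that $(M,g)$ is not conformally flat lets us pick $\hat x \in \pi^{-1}(x)$ so that $(\iota_{\hat x})_{*e}(Z) \in \lieg_0 = \co(1,2)$ and $\varphi^t_Z$ linearizes near $x$. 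The linear part is nontrivial (else linearization and analyticity would give $Z \equiv 0$) and belongs to the $\CO(1,2)$-stabilizer of $v$. Moreover, $x$ is recurrent for $\{\varphi^t_X\}$: trivially when $X(x)=0$, and because $X|_{\mathcal O}$ is a nowhere-vanishing complete vector field on the compact $1$-manifold $\mathcal O \cong S^1$---hence periodic---when $X(x)\neq 0$. Proposition \ref{prop.recurrence} thus forces $\mbox{Is}_X^{loc}(x)^0 = \{\varphi^t_Z\}$ to be noncompact.

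\textbf{Timelike case.} The $\CO(1,2)$-stabilizer of a nonzero timelike vector is the compact group $\OO(2)$, directly contradicting the noncompactness of $\{\varphi^t_Z\}$ just established.

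\textbf{Spacelike case.} The $\CO(1,2)$-stabilizer of a nonzero spacelike vector is $\OO(1,1)$, whose identity component is a hyperbolic $\RR$-subgroup; this is consistent with noncompactness, so one argues instead via the Cotton tensor. Applying Proposition \ref{prop:as_for_inv_sections} to $\{D_x\varphi^{t_k}_Z\}$ in both time directions (each a linear, hyperbolic pointwise holonomy in $\SO^0(1,1)\subset G_0$ with $\alpha\equiv 0$ and $\beta\to\pm\infty$), one sees that $\kappa(\hat x)$ lies in the intersection of two approximately stable subspaces of the Cotton module $\mathbb U$; by the weight decomposition of Section \ref{sec.cotton.tensor} this intersection is the one-dimensional weight-zero subspace $\mathbb U^0$. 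A Weyl-type discrete element of the algebraic isotropy $\mbox{Is}_X^{loc}(x)$ (present by Theorem \ref{thm.isotropy.algebraic} together with the monodromy along the circle $\mathcal O$ of the two null line fields transverse to $\mathcal O$) acts by $-1$ on $\mathbb U^0$, forcing $\kappa(\hat x)=0$. By analyticity $(M,g)$ is then conformally flat---a contradiction.

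\textbf{Main obstacle.} The timelike case is immediate from the compactness/noncompactness dichotomy. The spacelike case is more delicate: the hyperbolic $\OO(1,1)$-isotropy is compatible with recurrence-induced noncompactness and produces only bounded-distortion holonomy, to which Propositions \ref{prop.balanced.flat}--\ref{prop.stable.flat} and Theorem \ref{thm.translation.holonomy} do not directly apply. Reducing $\kappa(\hat x)$ to the one-dimensional $\mathbb U^0$ is straightforward from approximate stability, but eliminating this residual piece requires identifying a Weyl-type discrete symmetry in the algebraic hull of the isotropy, obtained from the global monodromy of the transverse null distributions along $\mathcal O$.
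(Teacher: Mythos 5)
Your timelike case is correct and uses a genuinely different argument from the paper's: you exploit Proposition \ref{prop.recurrence} (recurrence forces noncompact isotropy) against the fact that the $\CO(1,2)$-stabilizer of a nonzero timelike vector is compact, whereas the paper argues by continuity of the sign of the metric on the tangent to the orbit (a timelike tangent persists nearby, contradicting degeneracy of the nearby 2-dimensional orbits in $\Omega_f$). Both work. Your reduction of $\kappa(\hat x)$ to the weight-zero line $\mathbb{U}^0$ in the spacelike case is also correct (in fact since the isotropy \emph{exactly} fixes $\kappa(\hat x)$, not merely approximately stabilizes it, the two-sided approximate-stability detour is unnecessary), and you are right that the identity component of the isotropy alone cannot kill $\mathbb{U}^0$.

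However, the step that eliminates $\mathbb{U}^0$ has a genuine gap, in two ways. First, the existence of the ``Weyl-type discrete element'' is not justified. Theorem \ref{thm.isotropy.algebraic} only says the isotropy is algebraic with finitely many components; it does not produce a component outside $\SO^0(1,1)$. The monodromy you invoke is $D_x\varphi^T_X$ for $T$ the period of the circle orbit, and there is no reason this should swap the two null line fields transverse to $\mathcal{O}$: for instance, in a flat quotient $\RR^{1,2}/\BZ$ by a spacelike translation, the normal Lorentzian plane bundle along the spacelike circle is globally trivialized and the monodromy is trivial. (That model is conformally flat, so not itself a counterexample to the Lemma, but it shows the topological claim underlying your construction of the Weyl element is false in general.) Second, even if such an element existed, you would only obtain $\kappa(\hat{x})=0$ at the single point $x$ (or along the $1$-dimensional circle $\mathcal{O}$), which by analyticity does \emph{not} force conformal flatness of $(M,g)$; one needs vanishing of the Cotton tensor on a nonempty open set. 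The paper sidesteps both problems with a direct Cartan-geometric computation: taking the exponential curve $\hat\gamma(t)=\exp_{\hat x}(t(E_1+E_3))$ and expanding $\omega^{(-1)}_{\hat\gamma(t)}(Z)$ via the structure equation, one finds $\langle Z,Z\rangle_{\gamma(t)} = t^2(-2 + o(1))$, so the isotropy field $Z$ is timelike on a nonempty open set near $x$, contradicting degeneracy of the $Z_X$-orbits filling $\Omega_f$. That argument requires no discrete symmetry and produces the contradiction without passing through the Cotton tensor at all.
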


The proof of this lemma makes use of the exponential map of the Cartan
connection, defined in subsection \ref{subsec.stability.propagation}.
The exponential map will appear frequently in the remainder of this
section.

\begin{proof}
If a point $x$ has a timelike orbit, then $\varphi$ is negative in a
neighborhood of $x$.  This neighborhood intersects $\Omega_f$, where
$\varphi$ is nonnegative, a contradiction.

Now suppose that $x$ has a 1-dimensional spacelike orbit, and let
$\hat{x} \in \pi^{-1}(x)$ be such that the isotropy image
$(\hat{I}_X)_{\hat{x}}$ is linear, contained in $G_0 = \mbox{CO}(1,2)$.
  It fixes a spacelike vector, which makes it conjugate in $G_0$ to a
  1-parameter diagonal group
  $$\{ \hat{h}^t : (x^1,x^2,x^3) \mapsto (e^tx^1,x^2,e^{-t}x^3) \}$$ 
Let $\hat{Z} \in \mathfrak{co}(1,2)$ be the generator of $\{ \hat{h}^t
\}$, corresponding under $\iota_{\hat{x}}$ to a generator $Z \in \mathfrak{Is}_X(x)$.
  Note that the corresponding linear vector field $\hat{Z}$ on
  $\RR^{1,2}$ is timelike along the line $\RR(E_1+ E_3)$ (with the metric
  $\mathbb{I}$ of section \ref{sec.cotton.tensor}).

  Let $\hat{\gamma}(t) = \exp_{\hat{x}}(t(E_1+E_3))$ and $\gamma = \pi
  \circ \hat{\gamma}$.  The lift $\hat{\gamma}$ determines a metric
  along $\gamma$ in the conformal class $[g]$ by
$$ \langle u,v \rangle_{\gamma(t)} = \mathbb{I}\left(
  \omega^{(-1)}_{\hat{\gamma}(t)} \hat{u}, \omega^{(-1)}_{\hat{\gamma}(t)}
  \hat{v} \right)$$
where $\omega^{(-1)}$ is the component of $\omega$ on $\lieg_{-1}
\cong \RR^{1,2}$, and $\hat{u}, \hat{v}$ are any lifts of $u,v$ to
$T_{\hat{\gamma}(t)} \hat{M}$.
We will approximate
$\langle Z,Z \rangle_{\gamma(t)}$
by computing
$$ \left. \frac{d}{dt} \right|_0 \omega^{(-1)}_{\hat{\gamma}(t)}(Z)
=    (\hat{E}_1+\hat{E}_3)_{\hat{x}}.\omega^{(-1)}(Z) $$
where $\hat{E}_1 + \hat{E}_3$ is the vector field on $\hm$ satisfying $\omega(\hat{E}_1 + \hat{E}_3) \equiv E_1+E_3$.

Using that $L_Z\omega(\hat{E}_1+\hat{E}_3) \equiv 0$, the Cartan curvature gives
  $$ 0 = \Omega_{\hat{x}}(\hat{E}_1+\hat{E}_3,Z) =
  (\hat{E}_1+\hat{E}_3)_{\hat{x}}.\omega(Z)+ \left[ E_1 + E_3,\hat{Z}
  \right]$$
Thus
  $$ (\hat{E}_1+\hat{E}_3)_{\hat{x}}.\omega^{(-1)}(Z) = \left[
    \hat{Z},E_1+E_3 \right]^{(-1)} = E_1-E_3$$

  Now
  $$\omega^{(-1)}_{\hat{\gamma}(t)}(Z) = t(E_1 - E_3+ R(t)), \qquad
  \lim_{t \rightarrow 0} R(t) = 0$$
  and
\begin{eqnarray*}
  \langle Z,Z \rangle_{\gamma(t)} & = & \mathbb{I}\left( t(E_1 - E_3+ R(t)),
    t(E_1 - E_3+ R(t)) \right) \\
   & = & t^2(-2 + Q(t)) \qquad \lim_{t
    \rightarrow 0} Q(t) = 0
\end{eqnarray*}
For sufficiently small $t$, this inner product is negative along
$\gamma$.  This would mean that $Z \in \liezx$ is timelike on a
nonempty open set, contradicting that all $Z_X$-orbits in the open,
dense set
$\Omega_f$ are degenerate.
    \end{proof}



    \begin{corollaire}
      \label{cor.mixed.holonomy}
      Let $\{ h_k \}  \subset Z_X$
      be an unbounded
sequence, and suppose that $h_k.x \rightarrow y$ for $x \in \Omega$
and $y \in \Sigma$.
Then
any holonomy sequence for $\{ h_k \}$ at $x$ is of
mixed type.
\end{corollaire}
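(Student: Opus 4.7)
The plan is to build on the dichotomy already established inside the proof of Proposition \ref{prop.R2.orbits}: for $x\in\Omega$, any holonomy sequence $\{p_k = d_k\tau_k\}$ in $A'P^+$-form is of bounded distortion or mixed type. Indeed, Proposition \ref{prop.balanced.flat} excludes balanced and contracting sequences (the Cotton tensor is nonzero on $\Omega$), and Theorem \ref{thm.translation.holonomy} excludes any sequence vertically equivalent to one in $P^+$ (that would force $(M,g)$ to be conformally flat, contrary to our standing assumption). So I only need to rule out bounded distortion.

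The strategy is to measure the conformal dilation $\lambda_k$ of $h_k$ at $x$ via a $Z_X$-invariant vector field of nonzero squared norm, show that $\lambda_k\to 0$, and translate this into $\alpha(D_k)\to +\infty$---which is incompatible with bounded distortion. First, I pick $V\in\liezx$ with $g_x(V(x),V(x))\neq 0$: this is possible since, at $x\in\Omega$, the plane $\liezx(x)\subset T_xM$ is two-dimensional and degenerate by Proposition \ref{prop.R2.orbits}, so $g_x|_{\liezx(x)}$ has a one-dimensional null radical and a positive-definite complement; and $\liezx$ itself being two-dimensional leaves room to choose $V$ whose value at $x$ lies off the radical. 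The commutativity $[h_k,V]=0$ gives $V(h_k.x)=(D_xh_k)\cdot V(x)$, and conformality then yields
\[
g_{h_k.x}\bigl(V(h_k.x),\, V(h_k.x)\bigr) \;=\; \lambda_k^{\,2}\, g_x\bigl(V(x),\, V(x)\bigr).
\]
As $h_k.x\to y\in\Sigma$, continuity of $V$ gives $V(h_k.x)\to V(y)\in\liezx(y)$; and since $\liezx(y)$ is the one-dimensional lightlike orbit tangent at $y$, one has $g_y(V(y),V(y))=0$. Passing to the limit forces $\lambda_k\to 0$.

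It remains to translate this conclusion into the language of holonomy. In the $G_0$-weight decomposition of $\lieg_{-1}\cong\RR^{1,2}$, the element $d_k\in A$ acts with eigenvalues $e^{-\alpha(D_k)+\beta(D_k)}$, $e^{-\alpha(D_k)}$, and $e^{-\alpha(D_k)-\beta(D_k)}$, so the isotropic conformal factor of $h_k$ read in Cartan frames is exactly $\lambda_k = e^{-\alpha(D_k)}$. Consequently $\alpha(D_k)\to+\infty$, which excludes bounded distortion and leaves the mixed type as the only possibility. The one point that requires care is this final identification of the intrinsic $\lambda_k$ with the Cartan invariant $e^{-\alpha(D_k)}$: it is a direct consequence of the $G_0$-weights of $\lieg_{-1}$, provided the frames at $x$ and in the fiber over $y$ are consistently normalized.
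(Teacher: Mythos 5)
Your argument is correct and follows essentially the same route as the paper's: both exploit a vector field $V\in\liezx$ with $g_x(V(x),V(x))\neq 0$ and the fact that $V(h_k.x)\to V(y)$ must be lightlike (since the closed orbit at $y$ is one-dimensional and lightlike), which is incompatible with the conformal dilation of $h_k$ at $x$ staying bounded. The paper phrases the contradiction through approximate stability (bounded distortion would force $\lim_k D_x h_k(V(x))$ to be a nonzero spacelike vector), whereas you pass through the intrinsic conformal factor $\lambda_k\asymp e^{-\alpha(D_k)}\to 0$; these are equivalent, and your explicit appeal to Theorem \ref{thm.translation.holonomy} to exclude holonomy with trivial linear part is a reasonable way to make the initial dichotomy precise.
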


\begin{proof}
We established during the proof of
proposition \ref{prop.R2.orbits} above that holonomy sequences at
points of $\Omega$ are
of bounded distortion or mixed type.

By proposition \ref{prop.R2.orbits}, there is $Z \in \liezx$ with $Z(x)$ spacelike.  It is also
approximately stable for $\{ D_x h_k \}$.  If $\{ h_k \}$ has bounded
distortion at $x$, then $\lim_k D_x h_k(Z)$ is a nonzero spacelike
vector tangent to the orbit of $y$.  This would contradict the result
from proposition \ref{prop.R2.orbits} above that the orbit of $y$ is
one-dimensional and lightlike.
\end{proof}

\subsection{$Z_X$-orbits in the Cartan bundle}

In this section we construct a $Z_X$-invariant reduction of 
$\left. \hat{M} \right|_\Omega$ and show that $Z_X$-orbits there are
tangent to a special distribution defined by the Cartan connection.
These properties will
be key to controlling the accumulation of 2-dimensional orbits in
$\Omega$ on
1-dimensional orbits in $\partial \Omega$.

\label{sec.geometric-2dim}
\subsubsection{Adjoint approximately stable spaces}

 As in \cite{mp.confdambra}, we will call $\{ p_k \}$ an \emph{ACL holonomy
sequence} at $x \in \Omega_f$ if it is in $A'P^+$-form;
it corresponds to $h_k.x \rightarrow y$ for
$y \in \Sigma$; and
the isotropy in $Z_X$ with
respect to $\hat{y} = \lim h_k.\hat{x}_k.p_k^{-1}$ is linear.
Every ACL holonomy
sequence at $x \in \Omega$ has $( \lieg/\liep)^{AS}(p_k)$  $= E_1^\perp$,
because its linear component is of mixed type by corollary \ref{cor.mixed.holonomy}.

We will next describe $\lieg^{AS}(p_k)$, which
 also reflects the nonlinear part of $\{ p_k \}$.  We
 use the $G_0$-invariant decomposition $\lieg = \lieg_{-1} \oplus
 \lieg_0 \oplus \lieg_1$ with $\lieg_{-1} \cong \RR^{1,2}$ to identify
 $E_1^\perp$ with a subspace of $\lieg$.

 \begin{proposition}[\cite{mp.confdambra} Prop 6.5]
   \label{prop.E1perp.stable}
   For $\{ p_k \}$ an ACL holonomy sequence at $x \in \Omega$,
$$    E_1^\perp \subset \lieg^{AS}(p_k)$$
\end{proposition}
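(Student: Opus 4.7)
The plan is to exploit the $Z_X$-invariance of elements of $\liezx$ to produce $P$-equivariant, $\{h_k\}$-invariant sections of the Cartan bundle valued in $\lieg$, then apply Proposition \ref{prop:as_for_inv_sections} together with the ACL framing to identify the resulting approximately stable vectors with the canonical copy of $E_1^\perp \subset \lieg_{-1}$.

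First I would observe that each $Y \in \liezx$ lifts uniquely to an $\omega$-preserving vector field $\hat Y$ on $\hat M$, and that because $Z_X$ preserves $\omega$ and centralizes $\liezx$, the lift of every $h_k \in Z_X$ commutes with $\hat Y$. The map $\omega(\hat Y) : \hat M \to \lieg$ is therefore $P$-equivariant for the adjoint action and $\{h_k\}$-invariant, so Proposition \ref{prop:as_for_inv_sections} gives $\omega_{\hat x}(\hat Y) \in \lieg^{AS}(p_k)$. Applied to two generators $X, Z$ of $\liezx$, this yields vectors $v_1 := \omega_{\hat x}(\hat X)$ and $v_2 := \omega_{\hat x}(\hat Z)$ in $\lieg^{AS}(p_k)$ whose projections modulo $\liep$ are $X(x), Z(x)$, spanning the tangent plane to the two-dimensional $Z_X$-orbit at $x$. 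By Corollary \ref{cor.mixed.holonomy}, $\{p_k\}$ is of mixed type, so $(\lieg/\liep)^{AS}(p_k) = E_1^\perp$; since the tangent plane is contained in the approximately stable plane and both are 2-dimensional, they coincide. Thus $E_1^\perp \subset \lieg^{AS}(p_k) + \liep$.

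The main obstacle will be to upgrade this inclusion modulo $\liep$ to the statement $E_1^\perp \subset \lieg^{AS}(p_k)$ itself, that is, to eliminate the $\liep$-components of $v_1$ and $v_2$. Here I would invoke the ACL condition---linear isotropy at $\hat y$---together with the vertical-equivalence freedom in the choice of $\hat x$ within its $P^+$-fiber (as discussed before Definition \ref{def:holonomy_trichotomy}) to adjust $\hat x$ so that the $\liep^+$-components of $\omega(\hat X)$ and $\omega(\hat Z)$ vanish. The residual $\lieg_0$-components of $v_1, v_2$ must then themselves lie in $\lieg_0 \cap \lieg^{AS}(p_k)$; subtracting them from $v_1, v_2$ exhibits the canonical $\lieg_{-1}$-lifts of a basis of $E_1^\perp$ as elements of $\lieg^{AS}(p_k)$, completing the inclusion. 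The technical heart of the argument lies in controlling these $\liep$-components near $\Sigma$, using the geometry of the ACL configuration together with the fact that the Cotton tensor is nonvanishing on $\Omega$.
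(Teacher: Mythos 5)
Your steps establishing $\omega_{\hat x}(\liezx) \subset \lieg^{AS}(p_k)$ and $E_1^\perp \subset \lieg^{AS}(p_k) + \liep$ are correct, and the invariant-section reasoning is a reasonable alternative to the paper's starting point. But the final step---passing from an inclusion modulo $\liep$ to the inclusion in $\lieg$ itself---is exactly where the real content of the proposition lies, and your sketch of that step has two concrete gaps.

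First, you cannot freely move $\hat x$ within its $P^+$-fiber while keeping $\{p_k\}$ the same ACL holonomy sequence: replacing $\hat x$ by $\hat x.q$ replaces $\{p_k\}$ by the vertically equivalent sequence $\{q^{-1}p_k q'\}$ for suitable $q'$, which may leave the $A'P^+$-normal form and hence the ACL class, and it also replaces $\lieg^{AS}(p_k)$ by $\Ad(q^{-1})\lieg^{AS}(p_k)$. Moreover it is not clear that for two independent generators of $\liezx$ one can simultaneously kill both $\liep^+$-components by a single choice of $q \in P^+$; in fact the reduction $\mathcal{R}$ built later in section \ref{sec.geometric-2dim}---which is constructed \emph{using} this very proposition, so invoking it here would be circular---only achieves $\omega_{\hat x}(\liezx) \subset E_1^\perp + \lieq$, leaving a $2$-dimensional $\lieq_1 \subset \liep^+$ worth of residual vertical component. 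Second, even granting that the $\liep^+$-parts vanish, the claim that ``the residual $\lieg_0$-components must themselves lie in $\lieg_0 \cap \lieg^{AS}(p_k)$'' is unsupported: $\lieg^{AS}(p_k)$ is filtered for $\Ad(p_k)$ but not graded in general (precisely because $\tau_k \neq 1$), so knowing $v_1 \in \lieg^{AS}(p_k)$ does not let you separate $v_1^{(0)}$ and $v_1^{(-1)}$ into $\lieg^{AS}(p_k)$ individually. What you can deduce from boundedness of $\Ad(p_k)v_1$ is only that the $\lieg_{-1}$-component satisfies $\Ad(d_k)v_1^{(-1)}$ bounded; you would still have to control $\Ad(d_k)[\xi_k, v_1^{(-1)}]$, and that requires a quantitative bound on $\xi_k = \ln\tau_k$. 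This is exactly what the paper imports from \cite[Lem 6.4]{mp.confdambra}: $\xi_k$ lies on the fixed line $\RR E_1^t\mathbb{I}$ and $e^{\beta(D_k)}\xi_k$ stays bounded, after which $\Ad(p_k).v = \Ad(d_k).v + e^{\beta(D_k)}[\xi_k,v]$ is a one-line estimate. Without some such control on the $P^+$-part of the holonomy, the upgrade to $E_1^\perp \subset \lieg^{AS}(p_k)$ does not follow, and the appeal to ``the geometry of the ACL configuration'' and the nonvanishing Cotton tensor does not supply it.
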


\begin{preuve}
Write
  $\{p_k = d_k
\tau_k \}$.  By \cite[Lem 6.4]{mp.confdambra},
$\{ \xi_k = \ln \tau_k \}$ is contained in the line
$\RR E_1^t \mathbb{I} \subset \liep^+ \cong \RR^{1,2*}$.  Moreover,
for $D_k = \ln d_k$, the sequence $\{ e^{\beta(D_k)} \xi_k \}$ is bounded.
Now, it follows that for $\{ p_k
\}$ as above and $v \in E_1^\perp$,
$$ \Ad(p_k).v = \Ad(d_k)(v + [\xi_k,v]) = \Ad(d_k).v +
e^{\beta(D_k)}[\xi_k,v],$$
noting that $[\xi_k,v]$ is in the root space
$\lieg_{\beta}$ and $[ \xi_k, [\xi_k,v]] =
 0$.  This expression is bounded, so the desired inclusion follows.
  \end{preuve}

  \bigskip

 \subsubsection{Reduction of $\hat{M}$ over $\Omega$}

Write $\hat{\Omega}_f = \left. \hat{M} \right|_{\Omega_f}$.  Recall
that $Z_X$ acts locally freely in $\Omega_f$ with degenerate orbits,
by proposition \ref{prop.R2.orbits}.  Given $x \in \Omega_f$, the
orthogonal $n(x)$ to the orbit of $x$ is a 
lightlike line, tangent to the orbit.  We define a reduction of
$\hat{\Omega}_f$ given by the conformal frames in which this
orthogonal is the line $[E_1]$, as follows.
Denote $\mathcal{N}$ the null cone of the Lorentzian inner
product $\mathbb{I}$ in $\RR^{1,2}$, and by ${\bf P}(\mathcal{N})$ its
projectivization.  Let
\begin{eqnarray*}
\eta  :  \hat{\Omega}_f  & \rightarrow & {\bf P}(\mathcal{N}) \\
\hat{x} & \mapsto & \omega^{(-1)}_{\hat{x}}(\widehat{n(x)})
\end{eqnarray*}
where $\widehat{n(x)}$ is any lift of $n(x)$ to $T_{\hat{x}} \hat{M}$,
and $\omega^{(-1)}$ denotes the component on $\lieg_{-1} \cong \RR^{1,2}$.
This map is well-defined, and in fact analytic, in $\Omega_f$.  The
level set of $[E_1]$ is a reduction $\mathcal{R}' \subset \hat{\Omega}_f$ to $Q_0
\ltimes P^+$, where $Q_0 < G_0$ is the stabilizer of $[E_1]$.  The
$Z_X$-action preserves orbits, so it preserves the orthogonals, and it
leaves $\omega$ invariant; thus $Z_X$ preserves $\mathcal{R}'$.


Now we restrict to $\hat{\Omega} = \left. \hat{M} \right|_{\Omega}$.  Let $\lieq_1$ be the annihilator of $E_1$ in $\liep^+ \cong \RR^{1,2*}$,
with corresponding connected subgroup $Q_1 < P^+$.  Let $Q = Q_0
\ltimes Q_1< P$.  Define
$$ \mathcal{R} = \{ \hat{x}  \in \hat{\Omega} : 
\omega_{\hat{x}}^{-1}(E_1^\perp) \subset T_{\hat{x}} \mathcal{R}' \}$$

This construction and the following proposition are very similar to
\cite[Sec 6.2]{mp.confdambra}.

\begin{proposition}
  \label{prop.reduction}
The set $\mathcal{R}$ is a $Z_X$-invariant reduction of
$ \hat{\Omega}$ to $Q$.
  \end{proposition}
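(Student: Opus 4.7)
The plan has two parts: first establish $Z_X$-invariance, which will be essentially formal, and then verify that $\mathcal{R}$ is a principal $Q$-subbundle of $\hat{\Omega}$, following closely the blueprint of \cite[Sec 6.2]{mp.confdambra}.

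For $Z_X$-invariance, I would observe that $Z_X$ acts on $\hat{M}$ by automorphisms of the Cartan geometry, preserving $\omega$ and permuting the $Z_X$-orbits, hence preserving the orthogonal lightlike line field $n(\cdot)$ on $\Omega$. It follows that $\eta$ is $Z_X$-invariant, and therefore $Z_X$-invariance of $\mathcal{R}'$, of the distribution $T\mathcal{R}'$, and of the defining condition $\omega^{-1}(E_1^\perp) \subset T\mathcal{R}'$ all follow at once.

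To prove the $Q$-reduction I would set $W(\hat{x}) := \omega_{\hat{x}}(T_{\hat{x}}\mathcal{R}')$ and first note, from the $P$-equivariance $\eta(\hat{x}.p) = p^{-1}.\eta(\hat{x})$ (with $P$ acting on $\BP(\mathcal{N})$ through $P \to G_0$), that $W(\hat{x}) \supset \lieq_0 + \liep^+$, the infinitesimal stabilizer of $[E_1]$. Since $W(\hat{x}.p) = \Ad(p^{-1}) W(\hat{x})$ by Cartan equivariance, the membership $\hat{x}.p \in \mathcal{R}$ becomes $\Ad(p).E_1^\perp \subset W(\hat{x})$, so right $Q$-invariance reduces to the algebraic claim $\Ad(Q).E_1^\perp \subset E_1^\perp + \lieq_0 + \liep^+$. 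For $Q_0 \subset G_0 = \CO(1,2)$ this holds because $Q_0$ stabilizes $[E_1]$ conformally, hence fixes $E_1^\perp \subset \lieg_{-1}$. For $\exp(\xi) \in Q_1$ with $\xi \in \lieq_1$, Baker--Campbell--Hausdorff gives $\Ad(\exp \xi).v = v + [\xi, v] + \frac{1}{2}[\xi, [\xi, v]]$ for $v \in \lieg_{-1}$: the quadratic term lies in $\lieg_1 \subset \liep^+$, and the linear term lies in $\lieq_0$ by the bracket identity $[\lieq_1, E_1^\perp] \subset \lieq_0$. This last identity I would prove using Jacobi and abelianness of $\lieg_{-1}$ -- so that $[[\xi,v], E_1] = [[\xi, E_1], v]$ -- combined with a short coordinate check showing that $[\xi, E_1]$ sends $E_1^\perp$ into $\RR E_1$ when $\xi(E_1) = 0$.

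Fiber transitivity is the final step. Each fiber $\pi^{-1}(x) \cap \mathcal{R}'$ is a $6$-dimensional $(Q_0 \cdot P^+)$-orbit, while $Q$ is $5$-dimensional, so the transverse direction is a single parameter $\exp(\RR \xi_3)$, with $\xi_3 \in \lieg_1$ satisfying $\xi_3(E_1) \neq 0$. For any $Y \in \liezx$, Cartan equivariance gives $\omega(\hat{Y}_{\hat{x}.p}) = \Ad(p^{-1}) \omega(\hat{Y}_{\hat{x}})$; a direct computation exploiting $[\xi_3, E_2] \notin \lieq_0$ shows that the $\lieg_0/\lieq_0$-component of $\omega(\hat{Y}_{\hat{x}.\exp(c\xi_3)})$ varies linearly in $c$ whenever $Y(x)$ has a nonzero $E_2$-component modulo $\RR E_1$. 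Since $X$ and some $Y$ in $\liezx$ together span $T_x\mathcal{O}$ in $\Omega$, the codimension-$1$ condition $E_1^\perp \subset W$ is solved for a unique value of $c$, yielding both existence and $Q$-uniqueness of $\mathcal{R}$-frames in each fiber. The main technical hurdle I anticipate is the bracket identity $[\lieq_1, E_1^\perp] \subset \lieq_0$; once that is in place, the rest of the argument proceeds in parallel with \cite{mp.confdambra}.
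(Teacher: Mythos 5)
Your overall scaffolding (establish $Z_X$-invariance, then verify the $Q$-reduction by a combination of $\Ad(Q)$-stability of the defining condition and fiber-transitivity) is reasonable, and your $Z_X$-invariance paragraph and the algebraic verification that $\Ad(Q)$ preserves $E_1^\perp + \lieq_0 + \liep^+$ are both fine and in the same spirit as the paper's terse invariance remark and its computation of the stabilizer of $0 \in \Hom(E_1^\perp, \lieg_0/\lieq_0)$. The genuine gap is in your ``fiber transitivity'' step.

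The condition cutting out $\mathcal{R}$ inside $\mathcal{R}'$ is a priori a \emph{two}-dimensional condition: the paper encodes it as the vanishing of the map $\Phi: \mathcal{R}' \to \Hom(E_1^\perp, \lieg_0/\lieq_0)$, with target of dimension $2$. Your one-parameter family $\hat{x}.\exp(c\xi_3)$, $\xi_3 \in \lieg_1$ with $\xi_3(E_1) \neq 0$, moves $\Phi$ by the translation $c\,[\xi_3,\cdot]\big|_{E_1^\perp} \bmod \lieq_0$. But $[\xi_3, E_1] \in \lieq_0$ for \emph{any} $\xi_3 \in \liep^+$ (the bracket $[\xi, E_1]$ acting on $E_1$ gives only multiples of $E_1$, since $E_1$ is null); so the translation only changes the $E_2$-component of $\Phi$ and leaves $\Phi(\hat{x})(E_1)$ untouched. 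Thus the claim that ``the codimension-$1$ condition $E_1^\perp \subset W$ is solved for a unique value of $c$'' is not justified: you have two scalar conditions and one free parameter, and the vanishing of $\Phi(\hat{x})(E_1)$ is an additional geometric fact you neither state nor prove. (Relatedly, the paper stresses that the affine $Q_0 \ltimes P^+$-action on $\Hom(E_1^\perp,\lieg_0/\lieq_0)$ factors through a $2$-dimensional group and has a $1$-dimensional orbit through $0$; a priori the $\Phi$-image of a fiber could be a different orbit entirely, missing $0$.)

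The paper's proof instead establishes nonemptiness of $\mathcal{R}$ over each point of $\Omega$ \emph{dynamically}: it picks an ACL holonomy sequence $\{p_k\}$ of mixed type at $\hat{x}$ (via corollary \ref{cor.mixed.holonomy}), uses approximate stability to show $\omega_{\hat{x}}(\liezx) \equiv E_1^\perp$ mod $\liep$, then invokes the adjoint stability $E_1^\perp \subset \lieg^{AS}(p_k)$ from proposition \ref{prop.E1perp.stable} together with propagation of holonomy (proposition \ref{prop:propagation_holonomy}) along exponential curves $\exp_{\hat{x}}(sV)$, $V \in E_1^\perp$, to conclude $\omega_{\hat{x}}^{-1}(E_1^\perp) \subset T_{\hat{x}}\mathcal{R}'$. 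Only after this does the equivariant map $\Phi$ get used, and it is used precisely to conclude that, since every fiber intersects $\mathcal{R} = \Phi^{-1}(0)$, the image of $\Phi$ on $\mathcal{R}'$ is the orbit of $0$ (with stabilizer $Q$), giving the $Q$-reduction. This dynamics-driven step is the heart of the proposition and has no counterpart in your proposal; without it, the codimension count does not close.
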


\begin{preuve}
Let $x \in \Omega$.  By proposition \ref{prop.R2.orbits}, there is
$h_k \rightarrow \infty$ in $Z_X$ such that $h_k.x \rightarrow y \in
\Sigma$, and the isotropy at $y$ is linear and unipotent.
There is thus an ACL
holonomy sequence $\{ p_k \}$ for $\{ h_k \}$ with respect to $\hat{x}
\in \pi^{-1}(x)$.  By corollary \ref{cor.mixed.holonomy} it is of mixed
type.  As observed in the proof of proposition \ref{prop.R2.orbits},
$\omega_{\hat{x}}(\liezx)$ belongs modulo $\liep$ to
$(\lieg/\liep)^{AS}(p_k)$, which equals $E_1^\perp$.  For $x =
\pi(\hat{x})$, the projection $\liezx(x)$ is the tangent space to the
orbit of $x$.  Thus the orthogonal $n(x)$ corresponds under $\omega$
to $[E_1]$, and $\hat{x} \in \mathcal{R}'$.  Whenever there
is an ACL holonomy sequence of mixed type with respect to $\hat{x} \in
\hat{\Omega}$, then $\hat{x} \in \mathcal{R}'$.

By proposition \ref{prop.E1perp.stable}, $E_1^\perp \subset
\lieg^{AS}(p_k)$.  By proposition \ref{prop:propagation_holonomy},
for all $X \in E_1^\perp$, for $s$ sufficiently
small, $\{ p_k \}$ is also a holonomy sequence at $\hat{\gamma}(s) =
\exp_{\hat{x}}(sX)$,  and
this point is in $\hat{\Omega}$.  Thus $\hat{\gamma}(x)$ is in $\mathcal{R}'$,
for $s$ sufficiently small, which implies $\omega^{-1}_{\hat{x}}(X) \in T_{\hat{x}}
\mathcal{R}'$.  Then $\hat{x} \in
\mathcal{R}$; moreover, every $\pi$-fiber of $\hat{\Omega}$
intersects $\mathcal{R}$.

To verify that $\mathcal{R}$ is a reduction of $\mathcal{R}'$ to $Q$,
we will express it as the level set of a smooth---actually,
analytic---map on $\mathcal{R}'$.  As $\mathcal{R}'$ is a reduction of
$\hat{\Omega}$, there is, at each $\hat{x} \in \mathcal{R}'$, a
two-dimensional subspace of $\omega(T_{\hat{x}} \mathcal{R}')$
projecting modulo $\liep$ to $E_1^\perp$, varying smoothly with $\hat{x}$.  This subspace can be
expressed as the graph of a linear homomorphism $E_1^\perp \rightarrow
\liep$, unique up to addition of a homomorphism $E_1^\perp \rightarrow
\lieq_0 \ltimes \liep^+$, corresponding to addition of vertical
vectors tangent to $\mathcal{R}'$.  Then we compose with the
projection to $\liep/\liep^+ \cong \lieg_0$ to define
$$\Phi : \mathcal{R}' \rightarrow \mbox{Hom}(E_1^\perp, \lieg_0/\lieq_0)$$
Observe that $\hat{x} \in \mathcal{R}'$ belongs to $\mathcal{R}$ if and
only if $\Phi(\hat{x}) = 0$. 

The map $\Phi$ is $Q_0 \ltimes P^+$-equivariant, where, for $\varphi \in
\mbox{Hom}(E_1^\perp, \lieg_0/\lieq_0)$,
$$ (g \cdot\tau)(\varphi) = (\left. \Ad g \right|_{\lieg_0/\lieq_0}) \circ \varphi
\circ (\left. \Ad g^{-1} \right|_{E_1^\perp}) + \left. \left( \Ad \tau
- \mbox{Id} \right) \right|_{E_1^\perp}$$
For
$\tau \in P^+$, the image $\mbox{Ad } \tau (E_1^\perp + \lieq_0) \equiv
E_1^\perp + \lieq_0$ mod $\liep^+$ if and only if $\tau \in Q_1$.  The
affine $Q_0 \ltimes P^+$-action on $\mbox{Hom}(E_1^\perp,
\lieg_0/\lieq_0)$ factors through $\mbox{Aff}(\RR)$.  The orbit of $0$
is 1-dimensional, with stabilizer $Q$.  

Because every $\pi$-fiber of $\hat{\Omega}$ intersects $\mathcal{R}$,
which is in turn contained in $\mathcal{R}'$, the image of the latter 
under $\Phi$ is contained in the 
orbit of $0$.  Now $\mathcal{R}$, the inverse image of $0$, is a smooth
$Q$-reduction of $\hat{M}$ over $\Omega$.  It is $Z_X$-invariant and
analytic because $\mathcal{R}'$ and $\omega$ are.
\end{preuve}

The geometric interpretation of $\mathcal{R}$ is as the conformal
normalized 2-frames at points $x \in \Omega$ in which the orbits are
totally geodesic  (infinitesimally at $x$).

The fact from the proof of proposition \ref{prop.reduction} that $\{ p_k \}$ is also
a holonomy sequence at $\hat{\gamma}(s) = \exp_{\hat{x}}(sX)$ for all
$X \in E_1^\perp$ gives that $\hat{\gamma}(s) \in \mathcal{R}$, for
all $s$ such that $\gamma(s) = \pi \circ \hat{\gamma}(s) \in \Omega$.
It follows that $\omega^{-1}_{\hat{x}}(E_1^\perp) \subset T_{\hat{x}} \mathcal{R}$.

   \bigskip
  
 \subsubsection{Foliation of $\mathcal{R}$}

Let $\hat{\mathcal{D}} = \omega^{-1}(E_1^\perp + \lieq)$, an
analytic distribution on $\hat{M}$.  The restriction to $\mathcal{R}$
is tangent to $\mathcal{R}$ because, from the previous paragraph,
$\omega^{-1}(E_1^\perp)\subset T\mathcal{R}$, and $\mathcal{R}$ is a
principal $Q$-bundle.  When $M$ is 3-dimensional, we can prove
integrability of $\hat{\mathcal{D}}$ in $\mathcal{R}$ without using the
Cartan curvature.

\begin{lemme}
 \label{lem.integrability}
 The distribution  $\left. \hat{\mathcal{D}}\right|_{{\mathcal{R}}}$ is integrable. The projection on $M$ of the leaves 
 of this distribution coincide with $Z_X$-orbits in $\Omega$.
\end{lemme}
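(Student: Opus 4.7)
The plan is to avoid Frobenius by exhibiting the integral manifolds directly as the preimages in $\mathcal{R}$ of the $Z_X$-orbits in $\Omega$. Since $Z_X$ acts locally freely on $\Omega \subset \Omega_f$ with $2$-dimensional orbits (proposition \ref{prop.global.R2}), these orbits form a regular, analytic foliation of $\Omega$ by $2$-dimensional leaves; the distribution $\hat{\mathcal{D}}|_{\mathcal{R}}$ will be identified with the tangent distribution of the saturation of this foliation by the fibers of the $Q$-bundle $\mathcal{R}\to\Omega$.

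First I would verify the dimension count: $\hat{\mathcal{D}}|_{\mathcal{R}} = \omega^{-1}(E_1^\perp) + \omega^{-1}(\lieq)$ has rank $2 + \dim Q$, which matches the dimension of $\pi^{-1}(\mathcal{O}) \cap \mathcal{R}$ for any $Z_X$-orbit $\mathcal{O} \subset \Omega$. Moreover the vertical summand $\omega^{-1}(\lieq)$ is tangent to the $Q$-fibers of $\mathcal{R}$, so both distributions contain the vertical directions. It therefore suffices to check that their images under $d\pi$ coincide.

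The key step is the identification $d\pi(\omega^{-1}_{\hat{x}}(E_1^\perp)) = T_x(Z_X.x)$ for every $\hat{x} \in \mathcal{R}$ lying over $x \in \Omega$. This reuses the fact established during the proof of proposition \ref{prop.reduction}: at such $\hat{x}$, the approximately stable computation together with corollary \ref{cor.mixed.holonomy} yields $\omega_{\hat{x}}(\liezx(x)) \equiv E_1^\perp \pmod{\liep}$. Since $\liezx(x) = T_x(Z_X.x)$ is $2$-dimensional and $E_1^\perp$ is $2$-dimensional modulo $\liep$, equality holds, and pushing forward via $d\pi$ (which factors through $\lieg/\liep$) gives the desired identification.

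Consequently, for each $Z_X$-orbit $\mathcal{O} \subset \Omega$, the submanifold $\pi^{-1}(\mathcal{O}) \cap \mathcal{R}$ is tangent at every point to $\hat{\mathcal{D}}|_{\mathcal{R}}$, so it is an integral manifold. These submanifolds exhaust $\mathcal{R}$ and their projections are exactly the $Z_X$-orbits in $\Omega$, which finishes the lemma. The only mildly delicate point is keeping track of the lifts: a vector field $Y \in \liezx$ lifts to a vector field on $\mathcal{R}$ that is not $\omega^{-1}$ of something in $E_1^\perp$ but differs from it by a $Q$-vertical term in $\omega^{-1}(\lieq)$; this is harmless because $\omega^{-1}(\lieq) \subset \hat{\mathcal{D}}|_{\mathcal{R}}$, so the lift still lies in the distribution.
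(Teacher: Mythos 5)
Your proof is correct and takes essentially the same route as the paper: both exhibit the integral leaves explicitly as the $(Z_X \times Q)$-orbits in $\mathcal{R}$, equivalently the preimages $\pi^{-1}(\mathcal{O}) \cap \mathcal{R}$ of the $2$-dimensional $Z_X$-orbits, using the fact from the proof of proposition \ref{prop.reduction} that $\omega_{\hat{x}}(\liezx) \equiv E_1^\perp \pmod{\liep}$ for $\hat{x} \in \mathcal{R}$. The paper phrases the key containment as $\liezx(\hat{x}) \subset \hat{\mathcal{D}}_{\hat{x}}$ (combining $Z_X$-invariance of $\mathcal{R}$ with the fact that $\omega(T_{\hat{x}}\mathcal{R}) \cap (E_1^\perp + \liep) = E_1^\perp + \lieq$), which is exactly the vertical-adjustment point you flag at the end; your $d\pi$-image formulation is just a slightly more explicit way of doing the same dimension count.
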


\begin{preuve}
The key fact here is that the $Z_X$-orbits in $\mathcal{R}$
are tangent to $\hat{\mathcal{D}}$.  Indeed, as noted in the proofs of
propositions \ref{prop.R2.orbits} and \ref{prop.reduction},
$\omega_{\hat{x}}(\liezx)$ is congruent modulo $\liep$ to $E_1^\perp$
if there is a mixed ACL holonomy sequence with respect to $\hat{x}$, and more
generally, if $\hat{x} \in \mathcal{R}$.  Because $\mathcal{R}$ is
$Z_X$-invariant, $\liezx(\hat{x}) \subset T_{\hat{x}}\mathcal{R}$.
Thus $\liezx(\hat{x}) \subset \hat{\mathcal{D}}_{\hat{x}}$ for all $\hat{x}
\in \mathcal{R}$.

The $(Z_X \times Q)$-orbits in $\mathcal{R}$ are integral leaves for
$\hat{\mathcal{D}}$, projecting to the two-dimensional, degenerate
$Z_X$-orbits in $\Omega$.  
\end{preuve}

 \subsection{Accumulation of $2$-dimensional orbits on $1$-dimensional orbits}
 \label{sec.accumulation}
Before applying the results of the previous section, we focus on the
geometry around
$1$-dimensional $Z_X$-orbits. 
 
      \subsubsection{Plaques at $1$-dimensional orbits}

     We define a distinguished degenerate
 surface around each $1$-dimensional orbit. 
 The following proposition is an aggregate of propositions 6.11 and 6.12 (see also remark 6.14) of 
 \cite{mp.confdambra}.
 
 \begin{proposition}
  \label{prop.unipotent.extension}
    Let $y \in M$ have closed, isotropic $Z_X$-orbit and suppose that
    $\mbox{Is}_X(y)$ is linear and unipotent
    with respect to $\hat{y} \in \pi^{-1}(y)$.  Then:
    \begin{enumerate}
     \item The point $\hat{y}$ belongs to the closure $\overline{\mathcal{R}}$.
     \item Let $\gamma: [0,1] \to M$  a continuous path, smooth on $[0,1)$, such that $\gamma([0,1)) \subset \Omega$ and $\gamma(1)=y$.
      Then there exists a continuous lift $\hat{\gamma}: [0,1] \to \hat{M}$, smooth on $[0,1)$, such that $\hat{\gamma}([0,1)) \subset \mathcal{R}$, 
       and $\hat{\gamma}(1)=\hat{y}$.
     \item  There is a
        neighborhood $\mathcal{U}$ of $0$ in
        $E_1^\perp + \lieq$ such that
        $\exp_{\hat{y}}(\mathcal{U})$ is an integral submanifold
        of $\hat{\mathcal{D}}$. 
    \end{enumerate}
     \end{proposition}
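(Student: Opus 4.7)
The overall strategy is to establish assertion (1) as a direct corollary of the ACL holonomy machinery, to prove assertion (3) by analyzing the Cartan exponential map at $\hat{y}$, and to deduce assertion (2) from (3) via inversion of $\pi$ on a local integral submanifold.

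For assertion (1), I would exploit that the unipotent isotropy at $\hat{y}$, combined with the observations in corollary \ref{cor.mixed.holonomy}, guarantees a point $x \in \Omega$, a sequence $h_k \to \infty$ in $Z_X$, and an ACL holonomy sequence $\{ p_k = d_k \tau_k \}$ with $h_k.\hat{x}.p_k^{-1} \to \hat{y}$. Choose $\hat{x} \in \mathcal{R}$ (possible because $\pi: \mathcal{R} \to \Omega$ is surjective). Using the description of ACL sequences recalled in the proof of proposition \ref{prop.E1perp.stable}, the diagonal $d_k$ stabilizes $[E_1]$ and hence lies in $Q_0$, while $\ln \tau_k \in \RR E_1^t \mathbb{I} \subset \lieq_1$, so $p_k \in Q$. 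Since $\mathcal{R}$ is both $Z_X$- and $Q$-invariant, the sequence $h_k.\hat{x}.p_k^{-1}$ stays in $\mathcal{R}$, whence $\hat{y} \in \overline{\mathcal{R}}$.

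For assertion (3), I would first verify the algebraic fact that $E_1^\perp + \lieq$ is a Lie subalgebra of $\lieg$: within the $|1|$-grading of $\lieg$ this reduces to checking that $\lieq_0$ preserves $E_1^\perp$ and $\lieq_1$, that $[E_1^\perp, E_1^\perp] \subset \lieg_{-2} = 0$, and that $[\lieq_1, E_1^\perp]$ lands in the stabilizer of $E_1$ in $\lieg_0$ (by Jacobi, since $\lieq_1$ annihilates $E_1$), hence in $\lieq_0$. Next, choose $\mathcal{U}$ small enough that $\exp_{\hat{y}}$ is an embedding of $\mathcal{U}$. For $V \in \mathcal{U}$ and $W \in E_1^\perp + \lieq$, I would expand
\[ \omega_{\exp_{\hat{y}}(V)}(d_V \exp_{\hat{y}}(W)) = W + \tfrac{1}{2}[V, W] + \cdots , \]
where the ellipsis contains higher-order brackets plus curvature-dependent terms read off from iterated Lie derivatives of $\kappa$ along $\hat{V}$. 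The subalgebra property takes care of all bracket terms; the Cartan-curvature corrections must additionally be shown to lie in $E_1^\perp + \lieq$. At $\hat{y}$ this follows from $P$-equivariance: the unipotent group $(\hat{I}_X)_{\hat{y}} < Q$ fixes $\kappa(\hat{y})$, which restricts its values to a $Q$-stable subspace of $\liep^+$ that a direct check shows is closed under brackets with $E_1^\perp + \lieq$. Propagation of this containment along $\exp_{\hat{y}}(\mathcal{U})$ uses the approximate-stability inclusion $E_1^\perp \subset \lieg^{AS}(p_k)$ from proposition \ref{prop.E1perp.stable} together with the analyticity of $\kappa$.

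Assertion (2) then follows by inverting $\pi$ on the integral submanifold supplied by (3). The image $\exp_{\hat{y}}(\mathcal{U})$ projects to a $2$-dimensional analytic surface $S \subset M$ through $y$, tangent at $y$ to the $Z_X$-orbit direction together with the lightlike normal $n(y)$. Because $\overline{\mathcal{O}(x)}$ is locally connected and locally closed (theorem \ref{thm.gromov.stratification}), and $\gamma([1-\epsilon,1)) \subset \Omega$ accumulates on $y \in \Sigma$, the tail $\gamma([1-\epsilon,1])$ lies in $S$ for some small $\epsilon > 0$. Lifting via the local diffeomorphism $\pi: \exp_{\hat{y}}(\mathcal{U}) \to S$ yields a continuous path in $\mathcal{R} \cup \{ \hat{y}\}$ ending at $\hat{y}$, which glues by a $Q$-valued transition to a standard smooth lift of $\gamma|_{[0, 1-\epsilon/2]}$ into $\mathcal{R}$. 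The principal obstacle is assertion (3): the curvature bookkeeping away from $\hat{y}$ is delicate, since $\kappa$ is not manifestly controlled off the isotropy orbit of $\hat{y}$. Overcoming this will require combining the $P$-equivariance of $\kappa$ at the fixed frame $\hat{y}$ with the propagation-of-holonomy arguments along integral leaves of $\hat{\mathcal{D}}$.
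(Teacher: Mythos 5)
Your approach to (1) is essentially the intended one: the linear part $d_k$ of an ACL holonomy sequence lies in $A' \subset Q_0$, and $\ln\tau_k \in \RR E_1^t\mathbb{I} \subset \lieq_1$, so $p_k \in Q$; since $\mathcal{R}$ is $Z_X$- and $Q$-invariant, $h_k.\hat{x}.p_k^{-1}$ stays in $\mathcal{R}$. You should however say explicitly that the limit frame is some $\hat{y}'$ over $y$ with linear unipotent isotropy, and that identifying it with the prescribed $\hat{y}$ uses the remark after definition~\ref{def.distinguished} that two such frames differ by an element of $Q$ (so one can right-translate and stay in $\overline{\mathcal{R}}$).

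Your route to (3), however, discards the hypothesis that makes this cheap. The paper's argument is: by analyticity, the integrability of $\hat{\mathcal{D}}$ already established on $\mathcal{R}$ in lemma~\ref{lem.integrability} extends to $\overline{\mathcal{R}}$, which contains $\hat{y}$ by (1); and since $\hat{\mathcal{D}} = \omega^{-1}(E_1^\perp + \lieq)$ is $\omega$-parallel, the flows of the $\omega$-constant fields with values in $E_1^\perp+\lieq$ sweep out the integral leaf through $\hat{y}$, which is therefore $\exp_{\hat{y}}(\mathcal{U})$. Your direct verification by a Baker--Campbell--Hausdorff-type expansion with curvature corrections must control the Frobenius bracket $\Omega(\cdot,\cdot)$ of sections of $\hat{\mathcal{D}}$ at \emph{every} point of $\exp_{\hat{y}}(\mathcal{U})$, and — as you flag yourself — you only know how to constrain $\kappa$ at $\hat{y}$ via equivariance under the unipotent isotropy. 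The subalgebra property of $E_1^\perp+\lieq$ is needed but far from sufficient; analyticity is precisely what lets one transport integrability from $\mathcal{R}$ to its closure without redoing the curvature bookkeeping.

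The argument you give for (2) does not work and is, in effect, circular. You claim that semi-analyticity and local connectedness of $\overline{\mathcal{O}}$ force the tail $\gamma([1-\epsilon,1])$ to enter the plaque $\mathcal{P}_y$, but theorem~\ref{thm.gromov.stratification} says nothing of the sort: an arbitrary continuous path in $\mathcal{O}$ tending to $y$ may a priori approach $y$ away from $\mathcal{P}_y$. That $\mathcal{O}$ meets $\mathcal{P}_y$ in a relatively open set is exactly the content of proposition~\ref{prop.accumulation}, whose proof \emph{uses} part (2) of the present proposition as an input — so you are assuming what must be proved. There is also a dimension slip: $\pi$ restricted to $\exp_{\hat{y}}(\mathcal{U})$ is not a local diffeomorphism onto $\mathcal{P}_y$, since $\mathcal{U} \subset E_1^\perp+\lieq$ has dimension $2+\dim\lieq$ while $\mathcal{P}_y$ is a surface, so the lift you describe is not well-defined without choosing a section. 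The correct proof of (2) (\cite{mp.confdambra}, Prop.~6.12) lifts $\gamma|_{[0,1)}$ into the principal $Q$-bundle $\mathcal{R}\to\Omega$ and controls the endpoint of the lift using the holonomy at $\hat{y}$, independently of proposition~\ref{prop.accumulation}.
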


       The third  point of the proposition follows rather easily from the first, because by analyticity, 
integrability of $\hat{\mathcal{D}}$ on $\mathcal{R}$ extends to
the closure.

\begin{definition}
\label{def.distinguished}
Let $\exp_{\hat{y}}(\mathcal{U})$ be as in proposition
\ref{prop.unipotent.extension}.  For $\mathcal{U}$ small enough, 
 $\pi(\exp_{\hat{y}}(\mathcal{U}))$ is a degenerate $2$-dimensional submanifold of $M$. It will 
 be called a \emph{plaque at $y$}, and denoted ${\mathcal P}_y$.
\end{definition}

Observe that if $\hat{y}$ and $\hat{y}'$ are two points of $\pi^{-1}(y)$ such that $\mbox{Is}_X(y)$ is linear and unipotent
    with respect to $\hat{y}$ and $\hat{y}'$, then $\hat{y}'=\hat{y}.q$, where $q \in Q$ (actually $q$ belongs to the subgroup $Q_0 \ltimes Q_1' \subset Q$, where $Q_1'$ is the $1$-dimensional
    subgroup of $Q_1$ normalized by $Q_0$). In particular $\Ad(q^{-1})(E_1^{\perp} + \lieq)=E_1^{\perp} + \lieq$.
    The relation 
    $\exp_{\hat{y}'}(\Ad(q^{-1}).{\mathcal
      U})=\exp_{\hat{y}}({\mathcal U}).q$ is a consequence of
    the second axiom for $\omega$ part (2), and implies that the projections on $M$ of  
    $\exp_{\hat{y}'}(\Ad(q^{-1}).{\mathcal U})$ and $\exp_{\hat{y}}({\mathcal U})$ are the same. Thus all  plaques at $y$ have 
    the same germ, in the sense that if ${\mathcal P}_y$ and ${\mathcal P}_y'$ are two of them, then 
    ${\mathcal P}_y \cap {\mathcal P}_y'$ is open in
     ${\mathcal P}_y$ and ${\mathcal P}_y'$.

 \begin{proposition}[see \cite{mp.confdambra} Rem 6.17]
  \label{prop.accumulation}
  Let $\Delta$ be a $1$-dimensional $Z_X$-orbit.  Let ${\mathcal P}_y$ be a  plaque at $y \in \Delta$. If 
  ${\mathcal O}$ is a 
   $2$-dimensional $Z_X$-orbit of $\Omega$ containing $y$ in its
   closure, then ${\mathcal P}_y \cap {\mathcal O}$ has nonempty interior 
   in ${\mathcal P}_y$. 
 \end{proposition}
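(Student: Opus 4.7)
The plan is to prove the stronger local statement $\calp_y\subset\overline{\calo}$ on a neighborhood of $y$, and then extract the desired open subset from the local closedness of $\calo$ together with a dimension count; the inclusion $\calp_y\subset\overline{\calo}$ itself will come from a local foliation chart for $\hat\cald$ at $\hy$, propagated through the exponential map of the Cartan connection. Concretely, since $y\in\overline{\calo}$, I first choose a continuous path $\gamma:[0,1]\to M$, smooth on $[0,1)$, with $\gamma([0,1))\subset\calo$ and $\gamma(1)=y$, and lift it via Proposition~\ref{prop.unipotent.extension}(2) to $\hat\gamma:[0,1]\to\hm$ with $\hat\gamma([0,1))\subset\mathcal R$ and $\hat\gamma(1)=\hy$. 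Setting $\hx_k=\hat\gamma(t_k)$ for some sequence $t_k\to 1$ gives $\hx_k\to\hy$ in $\mathcal R$ with $\pi(\hx_k)\in\calo$; by Lemma~\ref{lem.integrability}, the $(Z_X\times Q)$-orbit $\hat{\calo}_k$ of $\hx_k$ is a leaf of $\hat\cald|_{\mathcal R}$ projecting onto $\calo$.

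By the analyticity hypothesis, the integrability of $\hat\cald$ on $\mathcal R$ extends to $\overline{\mathcal R}$ (the remark following Proposition~\ref{prop.unipotent.extension}), and Proposition~\ref{prop.unipotent.extension}(3) provides the integral submanifold $\exp_{\hy}(\mathcal U)$ through $\hy$. A local Frobenius chart for $\hat\cald|_{\overline{\mathcal R}}$ at $\hy$ then supplies a neighborhood $\hat V$ of $\hy$ in $\overline{\mathcal R}$, a transversal $T$, and a continuous submersion $F:\hat V\to T$ whose fibers are the local integral leaves, with $F^{-1}(F(\hy))$ an open neighborhood of $\hy$ in $\exp_{\hy}(\mathcal U)$. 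For large $k$, $\hx_k\in\hat V$ and $F(\hx_k)\to F(\hy)$; inside $\mathcal R$ the local leaf $F^{-1}(F(\hx_k))$ is a piece of the global leaf $\hat{\calo}_k$, and hence projects into $\overline{\calo}$. The openness of $F$ gives, for every $\hz\in F^{-1}(F(\hy))$ and every neighborhood $W$ of $\hz$ in $\hat V$, that $F(W)$ contains $F(\hx_k)$ for large $k$, so $W\cap F^{-1}(F(\hx_k))\neq\emptyset$ and $\hz\in\overline{\bigcup_k F^{-1}(F(\hx_k))}$. Shrinking $\mathcal U$ so that $\exp_{\hy}(\mathcal U)\subset F^{-1}(F(\hy))$ and applying $\pi$ then yields $\calp_y\subset\overline{\calo}$ on a neighborhood of $y$.

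To conclude, Theorem~\ref{thm.gromov.stratification}(1) makes $\calo$ a locally closed, semianalytic subset of $M$; hence $\overline{\calo}\setminus\calo$ is closed, semianalytic, and of dimension strictly less than $\dim\calo=2$. Being a connected $2$-dimensional analytic submanifold, $\calp_y$ cannot lie in this lower-dimensional set, so $\calp_y\setminus(\overline{\calo}\setminus\calo)$ is a nonempty open subset of $\calp_y$; by the inclusion just proved it is contained in $\calo$, giving the required nonempty interior of $\calp_y\cap\calo$ in $\calp_y$.

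The main obstacle I foresee is securing the Frobenius chart at the boundary point $\hy\in\overline{\mathcal R}$: a priori $\overline{\mathcal R}$ is only the closure of an $8$-dimensional analytic submanifold of $\hm$ and need not be smooth at $\hy$. The remark following Proposition~\ref{prop.unipotent.extension} asserts the required extension by analyticity, but to apply it concretely one must verify either that $\overline{\mathcal R}$ is regular enough at $\hy$ or, equivalently, that the involutivity tensor of $\hat\cald$ vanishes in a common ambient chart, so that the local leaves through $\hy$ and through the $\hx_k$ coexist in one foliation. Once that foothold is in place, the rest is a standard accumulation-plus-dimension computation.
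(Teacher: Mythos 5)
Your opening steps coincide with the paper's: both invoke the curve-selecting lemma to produce a path $\gamma$ into $\mathcal{O}$ ending at $y$, and both use Proposition~\ref{prop.unipotent.extension}(2) to lift it into $\mathcal{R}$ with endpoint $\hy$. The divergence is what happens at the boundary point $\hy$, and you have correctly located your own weak spot: the Frobenius chart on $\overline{\mathcal{R}}$ is a genuine gap, not just a verification you postponed. Lemma~\ref{lem.integrability} only produces a foliation of the \emph{open} manifold $\mathcal{R}$; Proposition~\ref{prop.unipotent.extension}(3) extends, by analyticity, a single integral submanifold $\exp_{\hy}(\mathcal{U})$ to the closure. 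Neither supplies a continuous submersion $F$ from a neighborhood of $\hy$ in $\overline{\mathcal{R}}$ to a transversal whose fibers are the local leaves. Since $\overline{\mathcal{R}}$ is merely the closure of an $8$-dimensional analytic submanifold of $\hm$ and its germ at $\hy$ is not known to be smooth, the openness of $F$ — which carries the entire weight of your accumulation argument — is not available. Your plan also aims at the stronger conclusion $\mathcal{P}_y\subset\overline{\mathcal{O}}$ near $y$, which the statement does not require, and this is exactly what forces you to need the full foliated chart rather than a single leaf.

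The paper's proof stays strictly on the interior side of the problem. Instead of a chart centered at the boundary point $\hy$, it centers at a nearby interior point: it fixes $t_0<1$ close enough to $1$ that $\exp_{\hat\gamma(t_0)}(\mathcal{U})$ contains $\hy$, sets $\mathcal{L}_{t_0}=\exp_{\hat\gamma(t_0)}(\mathcal{V})$ with $\mathcal{V}=\mathcal{U}\cap(E_1^\perp+\lieq)$, and then proves (Lemma~\ref{lem.prolongation}) that $\hy$ actually lies on $\mathcal{L}_{t_0}$. That lemma is the technical heart you are missing: one writes $\hat\gamma'(t)=\sum_i a_i(t)\hat X_i(\hat\gamma(t))$ with the $\hat X_i$ the $\omega$-constant vector fields framing $E_1^\perp+\lieq$, observes that this first-order ODE can be posed both on the ambient open set $\exp_{\hat\gamma(t_0)}(\mathcal{U})$ and on the closed submanifold $\mathcal{L}_{t_0}$, and concludes by uniqueness of solutions that $\hat\gamma([t_0,1))\subset\mathcal{L}_{t_0}$, hence $\hy\in\mathcal{L}_{t_0}$ by closedness. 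At that point the plaque $\exp_{\hy}(\mathcal{W})$ and $\mathcal{L}_{t_0}$ are two integral submanifolds of $\hat{\mathcal{D}}$ through the common point $\hy$, so they agree on an open set, whose $\pi$-image is a nonempty open piece of $\mathcal{P}_y$ contained in $\mathcal{O}$ — the desired interior, with no dimension count and no chart at the boundary. If you want to salvage your approach, you would need either to establish the smoothness of $\overline{\mathcal{R}}$ at $\hy$ together with involutivity of $\hat{\mathcal{D}}$ in an ambient open set, or to replace the Frobenius-chart step with a propagation argument along $\hat\gamma$ of the type the paper's Lemma~\ref{lem.prolongation} provides.
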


\begin{preuve}
 By theorem \ref{thm.gromov.stratification} (1), the orbit ${\mathcal O}$ is a semi-analytic set. 
 We will use the following result, known as the ``curve selecting lemma.''
 \begin{lemme}[see \cite{lojasiewicz.semianalytique} Sec 19, Prop 2]
  Let $S$ be a semi-analytic subset of $M$, and $y \in \overline{S}$.
  Assuming $y$ is not an isolated point of $S$, there exists
   an analytic arc $\gamma: [0,1) \to S$ extending continuously to $1$, with $\gamma(1)=y$.
 \end{lemme}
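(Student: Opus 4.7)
The plan is to prove the curve selecting lemma by exploiting the finite analytic description of semianalytic sets and reducing to a one-dimensional problem via Łojasiewicz-style stratification. First I would localize: working in an analytic coordinate chart around $y$, we may assume $M=\RR^n$ and $y=0$, and that in a small neighborhood $U$ of $0$ the set $S\cap U$ is a finite union of basic semianalytic pieces of the form $\{f_i=0,\ g_{j,1}>0,\ldots,g_{j,k}>0\}$ for real-analytic functions defined on $U$. Replacing $S$ by whichever of these pieces has $0$ as a nonisolated accumulation point, we reduce to the case that $S$ itself is of this basic form.

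Next I would invoke the existence of a \emph{normal partition} (in the sense of Łojasiewicz) of $S$: a finite decomposition $S=\bigsqcup_\alpha C_\alpha$ into connected analytic submanifolds, each semianalytic, satisfying the frontier condition that $\overline{C_\alpha}\setminus C_\alpha$ is a union of lower-dimensional cells $C_\beta$. Since $0$ lies in $\overline{S}$ and is a nonisolated point of $S$, at least one cell $C$ has positive dimension and admits $0$ in its closure; replacing $S$ by $C$, the task becomes to produce an analytic arc $\gamma:[0,1)\to C$ with $\gamma(t)\to 0$ as $t\to 1$.

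The main step is induction on $d=\dim C$. In the base case $d=1$, the closure of $C$ near $0$ is an analytic curve germ; by Puiseux parameterization of its irreducible branches, at least one branch containing $0$ in its closure admits an analytic parameterization $\gamma$ defined on $[0,1)$, lying in $C$ for $t<1$ and with $\gamma(t)\to 0$ as $t\to 1$. For the inductive step $d\geq 2$, after a generic linear change of coordinates, consider a real-analytic projection $\pi:\RR^n\to\RR^{n-1}$; the image $\pi(C)$ has positive dimension with $0$ as a nonisolated boundary point, so by induction there is an analytic arc $\eta$ ending at $0$ in its closure. The remaining work is to lift $\eta$ to an analytic arc in $C$; for this one applies the analytic implicit function theorem along $\eta$ in the direction complementary to $\pi$, after replacing $C$ by a dense open analytic subset on which $\pi$ restricts to a submersion.

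The main obstacle is that images of semianalytic sets under analytic projections are generally only subanalytic, not semianalytic. The standard route around this---used in \cite{bierstone.milman}---is to enlarge the category to subanalytic sets from the outset, prove the curve selecting lemma there (where projections behave well), and then deduce the semianalytic version as a special case. An alternative is to resolve singularities of the defining equations of $C$ by Hironaka's theorem, reducing to the normal-crossings case, in which analytic arcs toward $0$ can be written down explicitly in monomial coordinates and pushed down to $C$; the delicate point in either approach is controlling that the arc actually lies in $C$ and not merely $\overline{C}$ for $t<1$, which is handled by choosing cells and projections generically.
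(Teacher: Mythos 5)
This lemma is not proved in the paper at all: it is quoted verbatim from \L ojasiewicz (\emph{Ensembles semi-analytiques}, Sec.\ 19, Prop.\ 2) and used as a black box inside the proof of proposition \ref{prop.accumulation}. So there is no in-paper argument to compare yours against; the question is only whether your sketch would itself constitute a proof of this classical fact. It would not, though it correctly identifies most of the standard ingredients (localization to a basic semianalytic germ, normal partitions with the frontier condition, reduction of the base case to Puiseux parameterization of one-dimensional germs, and the problem that analytic images of semianalytic sets are merely subanalytic).

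Two concrete gaps. First, your induction is not well-founded: you induct on $d=\dim C$, but a \emph{generic} linear projection $\pi:\RR^n\to\RR^{n-1}$ restricted to a $d$-dimensional submanifold with $d\leq n-1$ is generically an immersion, so $\dim\pi(C)=d$ and the inductive hypothesis never applies. The classical argument inducts on the \emph{ambient} dimension $n$, and the entire technical content of \L ojasiewicz's proof is the use of the Weierstrass preparation theorem to build normal partitions adapted to the projection, so that distinguished cells project onto cells of a normal partition in $\RR^{n-1}$ --- this is precisely how the non-semianalyticity of images is circumvented, and it is the step your outline replaces with a gesture. Second, the two escape routes you offer for that obstacle --- the subanalytic curve selection lemma of Hironaka/Bierstone--Milman, or resolution of singularities --- are each theorems of depth at least comparable to the statement being proved; invoking either amounts to re-citing the result rather than proving it (which, to be fair, is exactly what the paper itself does). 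A smaller point: the base case also quietly uses that a one-dimensional semianalytic germ is contained in a one-dimensional analytic germ and hence is a finite union of Puiseux half-branches; this is true but is itself part of \L ojasiewicz's theory rather than an elementary fact.
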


 This lemma applied to $S={\mathcal O}$ provides a continuous path $\gamma: [0,1] \to M$, which is smooth on $[0,1)$,
  and satisfies  
 $\gamma([0,1)) \subset {\mathcal O} \subset \Omega$, as well as  $\gamma(1)=y$. 
  Let $\hat{y} \in \pi^{-1}(y)$ with linear, unipotent holonomy. 
Pproposition \ref{prop.unipotent.extension} (2) yields a continuous lift $\hat{\gamma}$ of $\gamma$, 
such that $\hat{\gamma}$ is smooth on $[0,1)$, $\hat{\gamma}([0,1)) \subset \mathcal{R}$, and $\hat{\gamma}(1)=\hat{y}$.

Because $\gamma([0,1)) \subset {\mathcal O}$, the image $\hat{\gamma}((0,1)) $ lies in an integral 
 leaf of $\hat{\mathcal{D}}$ (this leaf is the preimage of $\mathcal{O}$ in ${\mathcal R}$).
 Let $\mathcal{U}$ be a neighborhood of $0$ in $\lieg$ sufficiently
 small that for every $t \in [0,1]$, the set $\mathcal{U}$ is mapped
 diffeomorphically by $\exp_{\hat{\gamma}(t)}$
 onto its image.  Set 
  $\mathcal{V}=\mathcal{U} \cap (E_1^{\perp} + \lieq)$ and
  ${\mathcal L}_t=\exp_{\hat{\gamma}(t)}(\mathcal{V})$ for $t \in
  [0,1]$. A suitable choice of $\mathcal{U}$ ensures that ${\mathcal
    V}$ is connected and that $\pi({\mathcal L}_t)$ is a hypersurface for all $t$.
  
  \begin{lemme}
   \label{lem.prolongation}
   For $t$ close enough to $1$, $\mathcal{L}_t$ contains $\hat{y}$.
   \end{lemme}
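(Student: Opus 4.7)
The plan is to show that, for $t$ sufficiently close to $1$, the point $\hat\gamma(t)$ already lies in the integral submanifold $\mathcal{F} := \exp_{\hat{y}}(\mathcal{V})$ through $\hat{y}$; once this is established, the flow interpretation of the Cartan exponential will immediately give $\hat{y} \in \exp_{\hat\gamma(t)}(\mathcal{V}) = \mathcal{L}_t$.

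First I would recall that, after possibly shrinking $\mathcal{V}$, proposition \ref{prop.unipotent.extension}(3) makes $\mathcal{F} := \exp_{\hat{y}}(\mathcal{V})$ an integral submanifold of $\hat{\mathcal{D}}$ through $\hat{y}$ of dimension $\dim(E_1^\perp + \lieq)$. Next, fix a linear complement $\mathfrak{c}$ of $E_1^\perp + \lieq$ in $\lieg$ and use $\exp_{\hat{y}}$, restricted to a small neighborhood of $0$ in $(E_1^\perp + \lieq) \oplus \mathfrak{c}$, as a local chart around $\hat{y}$ in $\hat{M}$; in this chart, $\mathcal{F}$ is the slice $\{W = 0\}$. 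Write $\hat\gamma(t) = \exp_{\hat{y}}(Z_t + W_t)$ uniquely for $t$ close to $1$, with $Z_t \in E_1^\perp + \lieq$ and $W_t \in \mathfrak{c}$, both tending to $0$; the lemma then reduces to showing $W_t = 0$ near $t = 1$. Since $\hat\gamma((0,1)) \subset \mathcal{R}$ and $\pi \circ \hat\gamma((0,1)) \subset \mathcal{O}$, by lemma \ref{lem.integrability} this curve is tangent to a single leaf of the integrable distribution $\hat{\mathcal{D}}|_{\mathcal{R}}$; in particular, $\hat\gamma'(t) \in \hat{\mathcal{D}}_{\hat\gamma(t)}$. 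In the $\exp_{\hat{y}}$-chart, analyticity of $\omega$ and of $\hat{\mathcal{D}}$, together with integrability at the points of $\mathcal{R}$ lying in the chart, show that the transverse coordinate $W$ is locally constant along leaves of $\hat{\mathcal{D}}|_{\mathcal{R}}$ that accumulate on $\mathcal{F}$. Since $W_t \to 0$ along $\hat\gamma$, continuity then forces $W_t = 0$ for $t$ close to $1$.

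Given $W_t = 0$, we have $\hat\gamma(t) = \exp_{\hat{y}}(Z_t)$ for some $Z_t \in \mathcal{V}$ small. The Cartan exponential is given by the time-one flow of the $\omega$-constant field: $\exp_{\hat{x}}(Z) = \varphi_{\hat{Z}}^1(\hat{x})$ with $\omega(\hat{Z}) \equiv Z$. Hence $\hat\gamma(t) = \varphi_{\hat{Z}_t}^1(\hat{y})$ and therefore $\hat{y} = \varphi_{\hat{Z}_t}^{-1}(\hat\gamma(t)) = \exp_{\hat\gamma(t)}(-Z_t)$; since $-Z_t \in \mathcal{V}$ (after taking $\mathcal{V}$ balanced), this yields $\hat{y} \in \mathcal{L}_t$ as claimed.

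The main obstacle is the step forcing $W_t = 0$. The difficulty is that integrability of $\hat{\mathcal{D}}$ is asserted only on the submanifold $\mathcal{R}$, not on an open neighborhood of $\hat{y}$ in $\hat{M}$, so one cannot directly invoke Frobenius to produce a local foliation around $\hat{y}$. The workaround is to combine the single integral submanifold $\mathcal{F}$ through $\hat{y}$ provided by proposition \ref{prop.unipotent.extension}(3), the tangency condition $\hat\gamma'(t) \in \hat{\mathcal{D}}$ at points of $\mathcal{R}$ arbitrarily close to $\hat{y}$, and analyticity of $\omega$, to deduce the local constancy—and hence vanishing—of the transverse coordinate along $\hat\gamma$ near $t = 1$.
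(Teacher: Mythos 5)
Your geometric picture is essentially correct, and the final ``flow-reversal'' step is fine: if $\hat\gamma(t)=\exp_{\hat y}(Z_t)$ with $Z_t\in E_1^\perp+\lieq$ small, then $\hat y=\exp_{\hat\gamma(t)}(-Z_t)$ because $\exp_{\hat x}$ is the time-one flow of the $\omega$-constant field, and time-one flows are invertible by reversing the sign. Taking $\mathcal{V}$ balanced makes this land in $\mathcal{L}_t$. So the whole weight of your argument falls on the claim $W_t=0$ for $t$ close to $1$, and you yourself flag this as ``the main obstacle.'' Unfortunately the workaround you sketch does not close the gap. The assertion that ``the transverse coordinate $W$ is locally constant along leaves of $\hat{\mathcal{D}}|_{\mathcal{R}}$ that accumulate on $\mathcal{F}$'' is precisely what needs to be proved, and it does not follow from tangency of $\hat\gamma'$ to $\hat{\mathcal{D}}$ plus analyticity. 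In the $\exp_{\hat y}$-chart, the slices $\{W=\mathrm{const}\}$ are not integral surfaces of $\hat{\mathcal{D}}$ away from $W=0$ --- the Cartan connection twists $\hat{\mathcal{D}}$ relative to the exponential chart, and $\hat{\mathcal{D}}$ is only known to be integrable on the positive-codimension set $\overline{\mathcal{R}}$, not on an open neighborhood of $\hat y$ in $\hat{M}$. A curve tangent to $\hat{\mathcal{D}}$ that converges to $\hat y$ could a priori have a nonzero, nonconstant transverse coordinate; ruling this out is the entire content of the lemma.

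The paper resolves this by a different, more economical device: an ODE uniqueness argument anchored at $\hat\gamma(t_0)$ rather than at $\hat y$. Fix a basis $(X_1,\dots,X_m)$ of $E_1^\perp+\lieq$ and the corresponding $\omega$-constant fields $\hat X_i$ on $\hat M$. Since $\hat\gamma((0,1))$ lies in an integral leaf of $\hat{\mathcal{D}}|_{\mathcal{R}}$, one may write $\hat\gamma'(t)=\sum_i a_i(t)\hat X_i(\hat\gamma(t))$ with \emph{time-dependent} scalar coefficients $a_i(t)$. The non-autonomous ODE $\beta'(t)=\sum_i a_i(t)\hat X_i(\beta(t))$ then has a unique solution with initial condition $\beta(t_0)=\hat\gamma(t_0)$, both in the open set $\exp_{\hat\gamma(t_0)}(\mathcal{U})$ and in the submanifold $\mathcal{L}_{t_0}$ (the $\hat X_i$ are tangent to $\mathcal{L}_{t_0}$ because $\hat\gamma(t_0)\in\mathcal{R}$ and $\mathcal{L}_{t_0}$ is an integral submanifold of $\hat{\mathcal{D}}$). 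Uniqueness forces $\hat\gamma([t_0,1))\subset\mathcal{L}_{t_0}$, and since $\mathcal{L}_{t_0}$ is closed in $\exp_{\hat\gamma(t_0)}(\mathcal{U})$, the limit $\hat y=\hat\gamma(1)$ belongs to $\mathcal{L}_{t_0}$. Notice that this argument never needs integrability of $\hat{\mathcal{D}}$ in an open set, nor any control of a transverse coordinate; it only needs tangency of the $\hat X_i$ to the single submanifold $\mathcal{L}_{t_0}$. Your version is trying to prove the analogous statement centered at $\hat y$ (that $\hat\gamma$ is trapped on $\mathcal{F}=\exp_{\hat y}(\mathcal{V})$), but there the initial condition of the ODE is the endpoint $t=1$, which sits on the boundary $\overline{\mathcal{R}}\setminus\mathcal{R}$, and the uniqueness argument does not run cleanly in that direction. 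Replacing your step 2 by the paper's ODE argument would repair the proof.
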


 \begin{preuve}
 For $t_0$ close enough to $1$, the image $U=\exp_{\hat{\gamma}(t_0)}({\mathcal U})$ contains $\hat{y}$.
 Let $(X_1, \ldots, X_m)$ be a basis of $E_1^{\perp}+\lieq$, and
 $\hat{X}_1, \ldots ,\hat{X}_m$ the corresponding vector fields on $\hat{M}$ with
 $\omega(\hat{X}_i) \equiv X_i \ \forall i$.  Write
 $$\hat{\gamma}'(t)=\sum_{i=1}^m a_i(t)\hat{X}_i(\hat{\gamma}(t)) \qquad
 \forall t \in (t_0-\epsilon,1)$$
 where $a_i: (t_0-\epsilon, 1) \to \RR$ are some smooth functions.
 The ODE
 $$\beta'(t)=\sum_{i=1}^m a_i(t)\hat{X}_i(\hat{\gamma}(t))$$
is defined on the open set $\exp_{\hat{\gamma}(t_0)}({\mathcal U})$,
   but also on the submanifold $\mathcal{L}_{t_0}$. Uniqueness of
   solutions with the intial condition $\beta(t_0)=\hat{\gamma}(t_0)$
   implies that
    $\hat{\gamma}([t_0,1)) \subset \mathcal{L}_{t_0}$.  Then $\hat{y}=\hat{\gamma}(1) \in \mathcal{L}_{t_0}$ because
   $\mathcal{L}_{t_0}$ is closed in $\exp_{\hat{\gamma}(t_0)}({\mathcal U})$. \end{preuve}

A plaque ${\mathcal P}_y$ at $y$ is, by definition, the projection of $\exp_{\hat{y}}(\mathcal{W})$, where $\mathcal{W}$
 is a neighborhood of $0$ in $E_1^{\perp}+\lieq$. Now $\exp_{\hat{y}}(\mathcal{W})$ and $\mathcal{L}_{t_0}$ are two integral 
 leaves of $\hat{\mathcal{D}}$ having a common point $\hat{y}$. It follows that the intersection 
 $\exp_{\hat{y}}(\mathcal{W}) \cap \mathcal{L}_{t_0}$ is open in both $\exp_{\hat{y}}(\mathcal{W})$ and $\mathcal{L}_{t_0}$. 
 Projecting to $M$ gives an open subset of ${\mathcal P}_y \cap {\mathcal O}$. 
\end{preuve}

\begin{corollaire}
 \label{coro.uncountable}
 Let $\Delta$ be a $1$-dimensional orbit of $Z_X$. The set of $2$-dimensional $Z_X$-orbits in $\Omega$ containing $\Delta$ in their closure
  is at most countable. In particular, the set of $1$-dimensional $Z_X$-orbits which are accumulated by $2$-dimensional orbits is uncountable.
\end{corollaire}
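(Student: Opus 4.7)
The plan is to extract the countability from Proposition \ref{prop.accumulation} via a second-countability argument on a single plaque at $y$, and then derive the uncountability of accumulated $1$-dimensional orbits by contradiction against the fact that there are uncountably many $2$-dimensional orbits.

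First, I fix any $y \in \Delta$ and a plaque $\calp_y$ at $y$, available because by Proposition \ref{prop.R2.orbits} (1) the isotropy at $y$ is linear and unipotent. This plaque is a $2$-dimensional submanifold of $M$, hence second countable, so it admits at most countably many pairwise disjoint nonempty open subsets. Let $\mathcal{O}$ be a $2$-dimensional $Z_X$-orbit in $\Omega$ with $\Delta \subset \overline{\mathcal{O}}$. Since $\overline{\mathcal{O}}$ contains $y$, Proposition \ref{prop.accumulation} produces a nonempty $U(\mathcal{O}) \subset \calp_y \cap \mathcal{O}$, open in $\calp_y$. As distinct $Z_X$-orbits are disjoint, the open sets $U(\mathcal{O})$ are pairwise disjoint in $\calp_y$ as $\mathcal{O}$ varies, so the indexing family is at most countable. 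This proves the first assertion.

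For the second assertion, I argue by contradiction: suppose only countably many $1$-dimensional $Z_X$-orbits are accumulated by $2$-dimensional orbits. By Proposition \ref{prop.R2.orbits} (2), every $2$-dimensional orbit $\mathcal{O} \subset \Omega_f$ contains in its closure at least one $1$-dimensional orbit $\Delta(\mathcal{O})$. Combined with the first part, each of the assumed-countable family of such $\Delta$'s is accumulated by at most countably many $2$-dimensional orbits, whence the total family of $2$-dimensional orbits would itself be countable. On the other hand, pick $x_0 \in \Omega_f$ and an embedded smooth arc $T \subset \Omega_f$ through $x_0$ transverse at $x_0$ to the $2$-dimensional orbit through $x_0$; since $Z_X$ acts locally freely on $\Omega_f$ with $2$-dimensional orbits, after shrinking $T$ we may assume every point of $T$ lies on a distinct $2$-dimensional orbit. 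Uncountability of $T$ then forces uncountably many $2$-dimensional orbits, a contradiction.

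The genuine geometric content is entirely contained in Proposition \ref{prop.accumulation}; once one knows that every $2$-dimensional orbit accumulating on $\Delta$ must carve out a nonempty open piece of a fixed plaque at $y$, both statements reduce to cardinality bookkeeping. The only mild subtlety is the construction of the transversal in $\Omega_f$, which is immediate from the local product structure coming from the locally free $Z_X$-action with $2$-dimensional orbits established in Proposition \ref{prop.global.R2}.
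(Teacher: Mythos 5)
Your proof of the first assertion is essentially the paper's proof verbatim: fix $y \in \Delta$, fix one plaque $\mathcal{P}_y$, use Proposition \ref{prop.accumulation} to assign to each accumulating $2$-dimensional orbit $\mathcal{O}$ a nonempty open subset of $\mathcal{P}_y$, note these are pairwise disjoint because distinct orbits are disjoint, and conclude by second countability. No issues there.

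The paper leaves the ``in particular'' clause unproved, so your argument for the second assertion is genuinely additional content and the overall strategy (countably many accumulated $\Delta$'s, each accumulating countably many $2$-dimensional orbits by the first part, hence countably many $2$-dimensional orbits total, contradicting the existence of uncountably many such orbits) is correct. However, the specific step ``after shrinking $T$ we may assume every point of $T$ lies on a distinct $2$-dimensional orbit'' is not right in general: a single orbit may recur arbitrarily close to $x_0$ (for instance if it is dense), so no shrinking of $T$ prevents an orbit from hitting $T$ more than once. The fix is immediate and costs nothing: you only need that $T \cap \mathcal{O}$ is at most countable for each $2$-dimensional orbit $\mathcal{O}$, which follows from transversality --- the preimage of $T$ under the orbit map $Z_X \to \mathcal{O} \subset M$ is a $0$-dimensional, hence discrete, subset of the second-countable group $Z_X$. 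Then $T$ itself would be a countable union of countable sets, contradicting its uncountability. (Alternatively, and even more cheaply: $\Omega_f$ is open and nonempty so has positive measure, every $2$-dimensional orbit has measure zero in the $3$-manifold $M$, and a countable union of measure-zero sets cannot cover $\Omega_f$.) With that repair, the argument is complete.
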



\begin{preuve}
 Given $y \in \Delta$, the set of $2$-dimensional $Z_X$-orbits
 $\mathcal{O} \subset \Omega$ with $\Delta\subset \overline{\mathcal{O}}$
  coincides with the set of orbits $\mathcal{O}$ with
 $y \in \overline{\mathcal{O}}$. Denote this set of orbits $\mathcal{I}_y$.  Let
 $\mathcal{P}_y$ be a plaque containing $y$. For every $\mathcal{O}
 \in \mathcal{I}_y$, the interior of 
  $\mathcal{O} \cap \mathcal{P}_y$ is nonempty by proposition
  \ref{prop.accumulation}.  Choose $U_{\mathcal{O}}$ a connected component of this interior.
   If $\mathcal{O} \neq\mathcal{O}'$ are two distinct orbits in $\mathcal{I}_y$, then $U_{\mathcal{O}} \cap U_{\mathcal{O}'}=\emptyset$. A collection of pairwise disjoint 
   nonempty open subsets of $\mathcal{P}_y$ is at most countable, so
   ${\mathcal I}_y$ is also countable. \end{preuve}

\subsection{Stratification of the set of $1$-dimensional orbits,  conclusion}
\label{sec.stratification-conclusion}

Now we denote by $\Sigma$ the complement of $\Omega$.  By proposition
\ref{prop.R2.orbits}, $\Sigma$ is a nowhere dense, analytic subset of
$M$, comprising the points with
$1$-dimensional, closed, lightlike $Z_X$-orbits orbits.  For $k \in
{\bf N}$,  we denote by $\Sigma^{(k)}$ the set of smooth points of
dimension $k$ in $\Sigma$---namely, the points contained in some $k$-dimensional 
analytic submanifold of $M$ contained in $\Sigma$.  
Observe that $\Sigma^{(k)}= \emptyset$ for every $k \geq 3$, since $\Sigma$ is nowhere dense. 

For a semi-analytic set $\Sigma$, the \emph{dimension} of $\Sigma$
equals the maximal integer $k$, denoted $k_{max}$, such that
$\Sigma^{(k)} \neq \emptyset$.
The set of singular points of $\Sigma$, denoted $\Sigma_{sing}$, is the
complement $\Sigma \backslash \Sigma^{(k_{max})}$.  By 
\cite[Thm 4]{lojasiewicz.semianalytique} (see also \cite[Thm 7.2]{bierstone.milman} and \cite[Rem 7.3]{bierstone.milman}), $\Sigma_{sing}$
 is a closed, semi-analytic subset of $\Sigma$, of dimension less than
 $\mbox{dim }\Sigma$.

\begin{lemme}
 \label{lem.stratification}
 The complement of $\Sigma^{(2)}$ in $\Sigma$ is a 
finite union of $1$-dimensional $Z_X$-orbits, possibly empty.
\end{lemme}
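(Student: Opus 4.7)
The plan is to prove that $T := \Sigma \setminus \Sigma^{(2)}$ is a compact, $Z_X$-invariant, smooth $1$-dimensional analytic submanifold of $M$ whose connected components are precisely the $Z_X$-orbits it contains. A smooth compact $1$-manifold has finitely many components, and each will turn out to be a single $Z_X$-orbit, whence the lemma follows.

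I would first establish $Z_X$-invariance and compactness of $T$. By proposition \ref{prop.R2.orbits}, $\Sigma$ is a closed, analytic, $Z_X$-invariant subset of $M$, every point of which lies on a closed $1$-dimensional $Z_X$-orbit (a circle). Since $Z_X$ acts by analytic diffeomorphisms, the smooth $k$-dimensional stratum $\Sigma^{(k)}$ is $Z_X$-invariant; in particular so is $T$. The stratum $\Sigma^{(2)}$ is by definition open in $\Sigma$, so $T$ is closed in $\Sigma$, hence closed in $M$ and thus compact.

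Next I would show that $T$ has dimension at most $1$ and is semi-analytic. If $\dim \Sigma = 2$, then $T = \Sigma_{sing}$, which is closed semi-analytic of dimension strictly less than $2$ by the cited result of \cite{lojasiewicz.semianalytique}. If $\dim \Sigma \leq 1$, then $\Sigma^{(2)} = \emptyset$ and $T = \Sigma$ itself has dimension $\leq 1$. Applying Lojasiewicz's theorem once more, the singular locus $T_{sing}$ is closed semi-analytic of dimension $\leq 0$, hence a finite union of points by compactness. But $T_{sing}$ is $Z_X$-invariant, while every $Z_X$-orbit in $\Sigma$ is a circle and therefore infinite, so $T_{sing}$ must be empty. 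Thus $T$ is a smooth compact $1$-dimensional analytic submanifold, namely a finite disjoint union of circles.

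Finally I would argue that each circle component $C$ of $T$ is a single $Z_X$-orbit. Fix $x \in C$ and let $O$ be its $Z_X$-orbit; $O$ is $1$-dimensional by proposition \ref{prop.R2.orbits} and closed in $M$. Its tangent space at $x$ equals $\liezx(x)$, which also spans the tangent line to the $1$-manifold $C$ (as $C$ is a $Z_X$-invariant $1$-submanifold), so $O$ and $C$ coincide in a neighborhood of $x$; hence $O$ is open in $C$. Being also closed and nonempty in the connected $1$-manifold $C$, we conclude $O = C$. The only nontrivial ingredient is the Lojasiewicz stratification theorem used to rule out singular points of $T$; the remainder is a routine combination of the orbit description in proposition \ref{prop.R2.orbits} with elementary topology.
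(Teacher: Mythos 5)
Your proof is correct and follows essentially the same route as the paper: apply the Lojasiewicz singular-stratum theorem (twice) and rule out the resulting low-dimensional strata by observing they would consist of $Z_X$-fixed points, which are excluded by corollary \ref{coro.isotropy-dim2}. The only cosmetic differences are that you treat the cases $\dim\Sigma=1$ and $\dim\Sigma=2$ uniformly by working directly with $T=\Sigma\setminus\Sigma^{(2)}$, and you spell out the (easy) connectedness argument that each circle component is a single $Z_X$-orbit, which the paper leaves implicit.
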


\begin{preuve}
 If $\mbox{dim } \Sigma = 1$, then $\Sigma^{(2)}= \emptyset$, and $\Sigma_{sing}$ is actually empty. Indeed, 
  if $\Sigma_{sing} \not = \emptyset$, it is a closed semi-analytic
  set of dimension $0$, namely, a finite number of points. These are all
  $Z_X$-fixed points because $\Sigma_{sing}$ is $Z_X$-invariant. But corollary 
  \ref{coro.isotropy-dim2} rules out any
  $Z_X$-fixed points.  Thus $\Sigma$ is a compact, $Z_X$-invariant,
  analytic submanifold of $M$, which means it is a finite union of circular $Z_X$-orbits.
    
 If $\mbox{dim }\Sigma = 2$, then $\Sigma_{sing}$ is the complement of
 $\Sigma^{(2)}$ in $\Sigma$. If nonempty, it has dimension $1$ or $0$. 
 Let $\Sigma_{sing}'$ be the singular 
 set of $\Sigma_{sing}$. If $\Sigma_{sing}' \neq \emptyset$, then
 $\mbox{dim }\Sigma_{sing}'=0$, and, 
as above, it is a nonempty set of $Z_X$-fixed points, which leads to a contradiction.
 Therefore $\Sigma_{sing}' = \emptyset$, which means $\Sigma_{sing}$
 is a closed, $1$-dimensional, $Z_X$-invariant, analytic submanifold
 of $M$, again a finite union of circular $Z_X$-orbits.
\end{preuve}

Here is the key proposition leading to a contradiction.

\begin{proposition}[see \cite{mp.confdambra} Sec 6.4]
 \label{prop.no-accumulation}
 Let $y \in \Sigma^{(2)}$. Then there exists a distinguished plaque $\Sigma$ containing $y$ and contained in $\Sigma^{(2)}$.
\end{proposition}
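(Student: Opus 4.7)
The plan is to show that a sufficiently small plaque $\mathcal{P}_y$ at $y$ (in the sense of Definition \ref{def.distinguished}) is contained locally in an analytic $2$-dimensional submanifold $V$ of $\Sigma$; since $V$ is smooth of dimension $2$ inside $\Sigma$, it lies in $\Sigma^{(2)}$, and hence $\mathcal{P}_y \subset \Sigma^{(2)}$.

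First, I would take $V$ to be a neighborhood of $y$ in $\Sigma^{(2)}$ itself. Since $\Sigma_{sing}$ is $Z_X$-invariant and closed in $\Sigma$, the open set $V$ is a $Z_X$-invariant analytic $2$-submanifold of $M$. By proposition \ref{prop.R2.orbits}, every point $y' \in V$ lies in a closed, $1$-dimensional, lightlike $Z_X$-orbit whose isotropy, relative to a suitable lift $\hat{y}' \in \pi^{-1}(y')$, is linear and unipotent in $\CO(1,2)$.

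Next I determine $T_{y'}V$ at each $y' \in V$. Because $V$ is $Z_X$-invariant, the differential of the isotropy at $y'$ preserves $T_{y'}V$. The linear unipotent isotropy acting on $T_{y'}M \cong \RR^{1,2}$ fixes a unique null line, tangent to the orbit, and has a unique $2$-dimensional invariant subspace, namely the orthogonal $E_1^\perp$ of that null line. Hence $T_{y'}V$ corresponds under $\omega_{\hat{y}'}^{(-1)}$ to $E_1^\perp$, which is also the tangent plane at $y'$ to any plaque $\mathcal{P}_{y'}$. Now I claim $V$ is an integral submanifold of the distribution on $M$ obtained by projecting $\hat{\mathcal{D}}$: by proposition \ref{prop.unipotent.extension} (1), the lift $\hat{y}'$ belongs to $\overline{\mathcal{R}}$, and since $\omega$ is analytic, the distribution $\hat{\mathcal{D}}=\omega^{-1}(E_1^\perp+\lieq)$ extends analytically to $\overline{\mathcal{R}}$ with integrability preserved (as used in the proof of proposition \ref{prop.unipotent.extension} (3)). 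The projection of $\hat{\mathcal{D}}_{\hat{y}'}$ to $T_{y'}M$ is $\omega_{\hat{y}'}^{(-1)}(E_1^\perp)=T_{y'}V$, so $V$ is tangent to the projected distribution at every one of its points. Meanwhile, $\mathcal{P}_y$ is by construction a connected analytic integral submanifold of the same distribution through $y$. Uniqueness of integral submanifolds of an analytic integrable distribution through a point implies that $\mathcal{P}_y$ and $V$ coincide on a neighborhood of $y$; shrinking $\mathcal{P}_y$ accordingly yields $\mathcal{P}_y \subset V \subset \Sigma^{(2)}$.

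The main obstacle is the integrability argument of the previous paragraph: one must extend $\hat{\mathcal{D}}$ and its integrability across the boundary locus $\overline{\mathcal{R}}\setminus \mathcal{R}$, and make uniform sense of the projected distribution at all points of $V$. This requires the boundary extension properties of proposition \ref{prop.unipotent.extension} (mirroring the analysis in \cite{mp.confdambra}, Section~6.4), together with the verification that the unipotent-isotropy constraint on $T_{y'}V$ holds uniformly in $y'\in V$, not just at the basepoint $y$. Once this uniformity and analyticity are in place, the standard uniqueness theorem for analytic integrable distributions closes the argument.
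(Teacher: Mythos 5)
Your key observation---that the linear unipotent isotropy at $y' \in V$ has a unique $2$-dimensional invariant subspace, namely the degenerate plane $E_1^\perp$ orthogonal to the orbit, and that $Z_X$-invariance of $\Sigma^{(2)}$ therefore pins down $T_{y'}V$---is correct and is a clean alternative to the opening of the paper's argument. The paper instead works along a path $\alpha$ in $\Sigma^{(2)}$ with a lift $\hat{\alpha}$ and shows, via the abelianness of $\omega_{\hat{\alpha}(t)}(\liezx)$ and a degeneracy argument (the $A = 0$ step, as in lemma~\ref{lem:1d.degenerate}), that $\omega_{\hat{\alpha}(t)}(\liezx)$ is spanned by $U$ and $E_1 + \xi$. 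Both routes capture the same information about $T_{y'}\Sigma^{(2)}$; yours is arguably more conceptual.

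Where the proposal has a genuine gap is the concluding step, and it is a gap you name without closing. Knowing that $T_{y'}V = \pi_*\hat{\mathcal{D}}_{\hat{y}'}$ for each $y' \in V$ does not, by itself, show that a lift $\hat{V}$ of $V$ into $\overline{\mathcal{R}}$ is an integral submanifold of $\hat{\mathcal{D}}$: one must also control the \emph{vertical} component of the tangent to the lift, i.e., one must check that the vertical part of $T_{\hat{y}'}\hat{V}$ lies in $\lieq$ rather than somewhere else in $\liep$. Equivalently, if you want to work downstairs instead, you must produce a well-defined analytic $2$-plane distribution on a full open neighborhood of $y$ in $M$ of which both $V$ and the plaque $\mathcal{P}_y$ are integral surfaces---and the continuity and analyticity of this projected distribution across the interface $\Sigma^{(2)} = \partial\Omega$, where the orbit dimension jumps, is exactly what is not established. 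The paper's argument fills precisely this hole: the computation with $\left.\tfrac{d}{dt}\right|_{t_0} \omega_{\hat{\alpha}(t)}(Z) = [\,\omega(\hat{\alpha}'(t_0)),\, \omega_{\hat{\alpha}(t_0)}(Z)\,]$ shows that if $\omega(\hat{\alpha}'(t_0))$ had a component transverse to $E_1^\perp + \lieq$, the bracket would immediately push $\omega_{\hat{\alpha}(t)}(\liezx)$ off the span of $U$ and $E_1 + \xi$ for $t$ near $t_0$, contradicting what was established at every $t$. That is the step that forces $\hat{\alpha}'(t) \in \hat{\mathcal{D}}$ and hence $\alpha \subset \mathcal{P}_y$; deferring it to the cited reference, as you do, leaves the proof incomplete at its crux.
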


\begin{preuve}
 We recall here the main points of the argument from
 \cite{mp.confdambra} that $\Sigma$ is covered by finitely-many
 plaques, with some simplifications for dimension 3.  Denote by
 $\lieu_+$ the unipotent subalgebra of $\lieg_0$ annihilating $E_1$;
 it is 1-dimensional in our case.  Let $\alpha: (- \epsilon, \epsilon)
 \rightarrow \Sigma$ be a differentiable path through $y$, which we
 can assume is contained in $\Sigma^{(2)}$.  For each $t$, there is a
 point $\hat{\alpha}(t) \in \pi^{-1}(\alpha(t))$ with
 $\omega_{\hat{\alpha}(t)}(\liezx) \subset \RR E_1$ modulo $\liep$ and
 with $\lieu_+ \subset \omega_{\hat{\alpha}(t)}(\liezx)$, by
 proposition \ref{prop.R2.orbits}.  In fact, there is a differentiable
 lift $\hat{\alpha}$ of $\alpha$ satisfying these conditions.  Note
 that by proposition \ref{prop.unipotent.extension} (1), the image of $\hat{\alpha}$ is contained in $\overline{\mathcal{R}}$.

Because $\lieu_+ \subset \omega_{\hat{\alpha}(t)}(\liezx)$ for all
$t$, the latter subspace, which is 2-dimensional, is abelian, a
standard fact that can be deduced from part (2) of the second axiom
for $\omega$.
Together with a generator $U$ of $\lieu_+$, it is spanned by an element of the form $E_1 + A + \xi$ with $A \in \liea$
and $\xi \in \lieq'_1$, the centralizer of $\lieu_+$ in $\liep^+$, which is contained in $\lieq_1$.
Note that centralizing $\lieu_+$ means $A \in \ker \beta$.  An argument similar to that in the proof of 
lemma \ref{lem:1d.degenerate} shows that if $\alpha(A) \neq 0$, then an element of $\liezx$ would be 
timelike somewhere (see also \cite[Lem 6.13]{mp.confdambra}), contradiciting proposition \ref{prop.R2.orbits}. 
Therefore $A = 0$, and $\omega_{\hat{\alpha}(t)}(\liezx)$ is spanned by $U$ and $E_1 + \xi$ for
some $\xi \in \lieq'_1$.

On the other hand, if $A_{t_0} = \omega(\hat{\alpha}'(t_0))$ were transverse to $E_1^\perp + \lieq$, 
then calculations with the formula
$$ \left. \frac{d}{dt} \right|_{t_0} \omega_{\hat{\alpha}(t)}(Z) = [A_{t_0}, \omega_{\hat{\alpha}(t_0)}(Z)] \qquad Z \in \liezx$$
(see the end of \cite[Sec. 6.4]{mp.confdambra}) 
would give that $\omega_{\hat{\alpha}(t)}(\liezx)$ is not contained 
in the subspace spanned by $U$ and $E_1 + \xi$ for $t$ close to $t_0$.  
From this contradiction, we conclude that $\hat{\alpha}'(t) \in \hat{\mathcal{D}}$ 
for all $t$.  It follows that $\hat{\alpha}$ is contained in an integral leaf of $\hat{\mathcal{D}}$, 
and so $\alpha$ is contained in $\mathcal{P}_y$ for $t$ near $t_0$.  Varying $\alpha$ over 
paths through $y$ in $\Sigma^{(2)}$ gives that a neighborhood of $y$ in $\Sigma^{(2)}$ is contained in a plaque $\mathcal{P}_y$. 
Because both sets are smooth surfaces near $y$, we can shrink $\mathcal{P}_y$ so that it is contained in $\Sigma^{(2)}$.
\end{preuve}

Proposition \ref{prop.no-accumulation} together with proposition
\ref{prop.accumulation} shows that no $1$-dimensional orbit of 
$\Sigma^{(2)}$ can be accumulated by a $2$-dimensional orbit, because
the latter is contained in $\Omega$. By lemma \ref{lem.stratification}, the  complement of $\Sigma^{(2)}$
in $\Sigma$ is a finite union of  $1$-dimensional orbits. We infer that only finitely many $1$-dimensional orbits can be accumulated by $2$-dimensional 
orbits.  This is a contradiction to corollary \ref{coro.uncountable}.

\section{Case $\liezx$ is $1$-dimensional}
\label{sec.1d.proof}

The last case, in which $\liezx = \RR X$, is shown in this
section to also lead to conformal flatness.  The 1-dimensional orbits
of $Z_X = \{ \varphi^t_X \}$ cannot be closed by theorem
\ref{thm.isotropy.algebraic}, so there must be fixed points by
theorem \ref{thm.gromov.stratification}.   
This last part of the proof is very similar to section 5, treating global fixed points,
in \cite{mp.confdambra}, with slight simplification here stemming from
$Z_X$ being 1-dimensional.

We first recall the model space, the $3$-dimensional Einstein universe $\Ein^{1,2}$, which 
 was introduced at the beginning of section \ref{sec.symmetries}.
 This space can be realized as the projectivization of 
  the lightcone for the quadratic form 
  $$ Q^{2,3}(x)=2x_0x_4+2x_1x_3+x_2^2$$
  on $\RR^5$ with basis $E_0, \ldots, E_4$.
It is a compact projective variety and inherits a Lorentzian conformal
structure from $Q^{2,3}$.  We will denote $o=[E_0] \in \Ein^{1,2}$
and take $P$ to be the stabilizer of $o$ in $\PO(2,3)$.

The following proposition will conclude the proof of theorem \ref{thm.main.theorem}.

\begin{proposition} Suppose that $\liez_X = \RR X$.  Then $(M,g)$ is conformally flat.
\end{proposition}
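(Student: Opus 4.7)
The plan is to produce a fixed point of $\{\varphi_X^t\}$, linearize the flow there, and then use the hypothesis $\liezx=\RR X$ to rule out all possible linear parts except those forcing conformal flatness.

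\textbf{Step 1: existence of fixed points.}  Essentiality of $\{\varphi_X^t\}$ prevents its relative compactness in $\Conf(M,[g])$. Poincar\'e recurrence on the compact $M$ produces a recurrent point $x$; by proposition \ref{prop.recurrence}, the identity component of $\mbox{Is}_X^{loc}(x)$ is noncompact. Its Lie algebra sits inside $\{Y\in\liezx:Y(x)=0\}$, which under $\liezx=\RR X$ equals $\RR X$ if $X(x)=0$ and $\{0\}$ otherwise. Noncompactness forces $X(x)=0$.

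\textbf{Step 2: linearization.} Pick such a fixed point $x_0$. By theorem \ref{thm.fm.linearization}, either $(M,g)$ is conformally flat (done), or there is $\hat x_0\in\pi^{-1}(x_0)$ with $(\hat I_X)_{\hat x_0}\subset G_0=\CO(1,2)$, and $\{\varphi_X^t\}$ is conjugate to its linear part $\{h^t\}\subset G_0$ in a neighborhood of $x_0$.

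\textbf{Step 3: classification of $\{h^t\}$.} Write $h^t=e^{tD}\cdot e^{tX_1}$ with $D\in\RR$ (central dilation) and $X_1\in\mathfrak o(1,2)$. One of three alternatives holds:
\begin{itemize}
\item For some choice $t_k\to\pm\infty$ the linear sequence $\{h^{t_k}\}$, after $A'P^+$-normalization, is stable (balanced or contracting), and proposition \ref{prop.stable.flat} yields $(M,g)$ conformally flat.
\item $\{h^t\}$ is relatively compact---equivalently $D=0$ and $X_1$ trivial or elliptic---and then $\{\varphi_X^t\}$ is relatively compact near $x_0$; by properness of the $\Conf(M)$-action on $\hat M$, it is globally relatively compact, contradicting essentiality.
\item The pointwise holonomy at $\hat x_0$ is only of bounded-distortion or mixed type, which happens precisely when $D=0$ with $X_1$ purely hyperbolic or purely unipotent in $\mathfrak o(1,2)$, or in certain ``mixed'' subcases with $D\neq 0$ and the hyperbolic factor of $X_1$ dominating.
\end{itemize}

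\textbf{Step 4: remaining subcases.} For the residual case I follow section 5 of \cite{mp.confdambra}. The linearization provides local $1$-dimensional stable and unstable manifolds at $x_0$ in the hyperbolic subcase, and a nontrivial analytic fixed-point curve through $x_0$ in the unipotent subcase. Proposition \ref{prop:propagation_holonomy} extends the pointwise holonomy $\{h^{t_k}\}$ along the approximately stable directions in $\lieg/\liep$, producing $2$-dimensional submanifolds through $x_0$ on which proposition \ref{prop:as_for_inv_sections} constrains the Cotton tensor to restricted weight subspaces of $\mathbb U$. Combining both flow directions with the $\{h^t\}$-invariance of the Cotton tensor at $x_0$ itself, all its weight components must vanish on a nonempty open set, hence on $M$ by real-analyticity.

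\textbf{Main obstacle.} The crux is Step 4: the non-stable nature of the residual bounded-distortion and mixed pointwise holonomies prevents direct application of proposition \ref{prop.stable.flat}, so the vanishing of the Cotton tensor must be extracted from combining weight-space analysis on $\mathbb U$ along $\{h^t\}$-invariant submanifolds with real-analytic propagation, exactly as in the analogous section of \cite{mp.confdambra}.
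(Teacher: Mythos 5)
Steps 1--3 of your proposal match the paper's argument closely: essentiality $\Rightarrow$ a fixed point (the paper gets there via theorem \ref{thm.gromov.stratification} applied to the non-closed $1$-dimensional orbits rather than via recurrence at a point of such an orbit, but the content is the same), then linearization via theorem \ref{thm.fm.linearization}, then classification of the linear part $\hat X$ using that the isotropy is algebraic, ruling out the elliptic type by noncompactness. The split you make into stable / compact / residual agrees with the paper's semisimple-vs-nilpotent trichotomy once the stable semisimple cases are absorbed by proposition \ref{prop.stable.flat}.

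Step 4 is where the proposal does not actually prove anything, and moreover the strategy you sketch for it is not what makes the residual cases work. You suggest extracting vanishing of the Cotton tensor \emph{at $x_0$} by propagating the non-stable pointwise holonomy along approximately stable directions and intersecting weight constraints in $\mathbb U$. That does not close: the bounded-distortion and mixed one-parameter subgroups have $\mathbb{U}^{AS}(h^{t_k})$ a proper nonzero subspace, and combining directions at a single fixed point does not force all five weight components to vanish. The paper's argument for hyperbolic $\hat X$ is instead global in nature: it follows the exponential curve $\alpha(s)=\pi\exp_{\hat x_0}(sE_1)$, shows (using semi-analyticity of orbit closures) that $\overline{\alpha^+}=\{x_0\}\cup\alpha^+\cup\{x_1\}$ with $x_1$ a second $Z_X$-fixed point, and then computes, by developing the conformal geodesic in $\mbox{PO}(2,3)$, that the isotropy of $\{\varphi^t_X\}$ with respect to a suitable $\hat x_1$ is the diagonal $g^t=\diag(e^{tb},e^{ta},1,e^{-ta},e^{-tb})$ --- i.e.\ $a$ and $b$ are exchanged, so the holonomy at $x_1$ \emph{is} contracting and proposition \ref{prop.stable.flat} applies there. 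For unipotent $\hat X$ the conclusion is not ``Cotton vanishes'' but a contradiction: the linearized flow gives $Z_X.y_0$ bounded away from $x_0$ for $y_0\neq x_0$ nearby, yet $\overline{Z_X.y_0}$ contains a second fixed point $x_1$ (again unipotent, by the hyperbolic case already handled), and the same boundedness statement at $x_1$ contradicts accumulation of a nearby orbit on $x_1$. So the missing idea is the ``second fixed point'' construction and the conjugation that flips the weights; without it, the weight-space bookkeeping you propose does not yield vanishing at $x_0$.
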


The proof occupies the remainder of this section.

\subsection{Unipotent or hyperbolic isotropy}

Let $x_0$ be a $Z_X$-fixed point. 
By theorem \ref{thm.fm.linearization},
$M$ is conformally flat or $X$ is linearizable at $x_0$.  Assuming
the latter, let $\hat{x}_0 \in \pi^{-1}(x_0)$ with
$\omega_{\hat{x}_0}(X) = \hat{X} \in \lieg_0$.
Recall from theorem \ref{thm.isotropy.algebraic} that
$(\hat{I}_X)_{\hat{x}}$ is algebraic, which means that it is closed under Jordan
decomposition (see, eg, \cite[Thm 4.3.3]{morris.ratners.thms.book}).  Thus $\hat{X}$ is
$\RR$-semisimple, nilpotent, or elliptic.
The elliptic case would contradict
noncompactness of $Z_X$, since the isotropy monomorphism $\iota_{\hat{x}} :
\mbox{Is}^{loc}(x) \rightarrow P$ (see section \ref{sec.gromov.frobenius}) is proper.

\subsection{Hyperbolic isotropy implies conformal flatness}

Suppose $\hat{X}$ is semisimple, so it generates an $\RR$-split
1-parameter subgroup of $G_0$.  If this subgroup were balanced or
contracting, then $(M,g)$ would be conformally flat by proposition \ref{prop.stable.flat}.
Up to conjugation in $G_0$ or replacing $\hat{X}$ with $-\hat{X}$, we may assume
$$ e^{t \hat{X}} : (x,y,z) \mapsto (e^{t(b-a)}x, 
e^{-a}y, e^{t(-a-b)}z) \qquad \mbox{with } 0 \leq a < b$$

Recall the exponential map of $\omega$ from subsection \ref{subsec.stability.propagation}. 
Let  $\hat{\alpha} = \exp_{\hat{x}_0} (s E_1)$, and let $\alpha = \pi
  \circ \hat{\alpha}$.  The set $\alpha^+ = \{\alpha(s) : s > 0 \}$ is
  a $Z_X$-orbit.  Let $x_1$ be an accumulation point of
  $\alpha(s_k)$ for a sequence $s_k \rightarrow \infty$.  It is shown
  in Section 5.2 of \cite{mp.confdambra}
  that the isotropy of $X$ at $x_1$ is also hyperbolic, with $a$ and
  $b$ exchanged.  We recall the main points of that argument here.

  \begin{lemme}[\cite{mp.confdambra} Lemma 5.4]
    $$\overline{\alpha}^+ = \{ x_0 \} \cup \alpha^+ \cup \{ x_1 \}$$
    and $x_1$ is a $Z_X$-fixed point.
  \end{lemme}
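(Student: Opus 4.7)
The plan is to combine the explicit formula for the $\varphi_X^t$-action on the Cartan exponential curve $\hat{\alpha}$, Gromov's stratification theorem, and hyperbolic sink dynamics at the limit.

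First, I would verify that $\alpha^+$ is a single $\varphi_X^\RR$-orbit. Since $X(x_0)=0$ lifts to $\varphi_X^t.\hat{x}_0=\hat{x}_0\cdot e^{t\hat{X}}$, and since $\varphi_X^t$ preserves $\omega$, the Cartan exponential transports as
\[
\varphi_X^t.\hat{\alpha}(s) \;=\; \exp_{\hat{x}_0\cdot e^{t\hat{X}}}(sE_1) \;=\; \exp_{\hat{x}_0}\bigl(s\,\Ad(e^{t\hat{X}})E_1\bigr)\cdot e^{t\hat{X}} \;=\; \hat{\alpha}(s\,e^{t(b-a)})\cdot e^{t\hat{X}},
\]
using $\Ad(e^{t\hat{X}})E_1 = e^{t(b-a)}E_1$. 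Projecting gives $\varphi_X^t.\alpha(s)=\alpha(s\,e^{t(b-a)})$, so $\alpha^+=\varphi_X^\RR.\alpha(1)$; as $t\to-\infty$, $\alpha(e^{t(b-a)})\to x_0$, so $x_0\in\overline{\alpha^+}$.

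Next I would locate a fixed point in $L:=\overline{\alpha^+}\setminus(\{x_0\}\cup\alpha^+)$, which is nonempty, closed, and $\varphi_X^\RR$-invariant. By theorem \ref{thm.gromov.stratification} (2), $\overline{\alpha^+}$ contains a closed $Z_X^{loc}$-orbit; since $\alpha^+$ itself is not closed ($x_0\notin\alpha^+$), this closed orbit lies in $L$. Under $\liezx=\RR X$ a closed $Z_X^{loc}$-orbit is either a fixed point of $X$ or a periodic $\varphi_X$-orbit. A periodic point $y$ has $X(y)\neq 0$, so $\mathfrak{Is}_X^{loc}(y)=\{Z\in\RR X : Z(y)=0\}=0$; then the identity component of $\mbox{Is}_X^{loc}(y)$ is trivial, contradicting proposition \ref{prop.recurrence} at the recurrent point $y$. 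Hence $L$ contains a $\varphi_X$-fixed point $x_1$.

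Finally, I would show $x_1$ is a hyperbolic sink, which forces $L=\{x_1\}$. Extract $s_k\to\infty$ with $\alpha(s_k)\to x_1$ and, by compactness of $M$ and properness of the $P$-action, a convergent lift $\hat{\alpha}(s_k)\cdot p_k\to\hat{x}_1\in\pi^{-1}(x_1)$. With $t_k:=\ln(s_k)/(b-a)$, the sequence $q_k:=e^{-t_k\hat{X}}\cdot p_k$ is a pointwise holonomy sequence for $\{\varphi_X^{t_k}\}$ at $\alpha(1)$ carrying $\hat{\alpha}(1)$ to $\hat{x}_1$. A computation along the lines of Section 5.2 of \cite{mp.confdambra}, using that $\exp(sE_1)$ acts as a degree-$2$ polynomial in $s$ in the adjoint representation, identifies the isotropy generator $\hat{X}_1\in\lieg$ of $X$ at $\hat{x}_1$ as the conjugate of $\hat{X}$ by the Weyl element exchanging $E_1$ and $E_3$; equivalently, the weights of $\hat{X}_1$ on $\RR^{1,2}$ are obtained from $(b-a,-a,-a-b)$ by swapping $a$ and $b$, giving $(a-b,-b,-a-b)$, all nonpositive and not all zero since $b>a\geq 0$. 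Thus $\hat{X}_1$ is contracting and $x_1$ is a linearizable hyperbolic sink by theorem \ref{thm.fm.linearization}; the forward orbit $\alpha([s_0,\infty))$ for $s_0$ large lies in the basin of $x_1$ and converges to it, giving $L=\{x_1\}$.

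The main obstacle is this last step: the formula $\varphi_X^t.\alpha(s)=\alpha(se^{t(b-a)})$ only controls the $\varphi_X$-motion along $\alpha^+$ and gives no direct information about the isotropy at the limit $x_1$. Identifying $\hat{X}_1$ as a specific Weyl conjugate of $\hat{X}$ requires explicit computation of $\exp(sE_1)$ in the adjoint representation and a careful analysis of the Iwasawa-type decomposition of $p_k$ as $s_k\to\infty$, as done in section 5.2 of \cite{mp.confdambra}.
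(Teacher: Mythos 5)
Your steps (1) and (2) are essentially sound: the conjugation formula $\varphi_X^t.\alpha(s)=\alpha(se^{t(b-a)})$ correctly identifies $\alpha^+$ as the $\liezx$-orbit of $\alpha(1)$ and shows $x_0\in\overline{\alpha^+}$, and Gromov's stratification theorem \ref{thm.gromov.stratification} does apply to this orbit.

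Step (3), however, is circular. You invoke the isotropy computation from Section 5.2 of \cite{mp.confdambra} (Lemmas 5.6--5.7, reproduced in the paper just after this lemma) to conclude that $\hat{X}_1$ is a contracting conjugate of $\hat{X}$. But that computation presupposes the very statement you are trying to prove: Lemma 5.6 constructs $\hat{x}_1$ as $\lim_{\theta\to\pi/2}\hat{\beta}(\theta)$, and the ODE argument that this limit exists uses that $\beta:[0,\pi/2]\to M$ extends to a \emph{continuous} path with $\beta(\pi/2)=x_1$, i.e.\ that $\alpha(s)$ converges as $s\to\infty$, which is exactly the content of Lemma 5.4. Working with a subsequence $s_k\to\infty$ and a subsequential lift $\hat{x}_1$, as you propose, does not rescue the argument: the holonomy sequence $q_k$ you produce depends on the unknown and uncontrolled $p_k$, so you cannot read off the isotropy generator from it without the full ODE/developing-map analysis, which needs path convergence, not merely subsequential convergence. (Also a small slip: with the paper's convention $f_k.\hat{x}_k.p_k^{-1}\to\hat{y}$, the holonomy sequence should be $q_k=p_k^{-1}e^{t_k\hat{X}}$, not $e^{-t_k\hat{X}}p_k$.) A further minor gap in step (2): the closed orbit in $\overline{\alpha^+}$ produced by theorem \ref{thm.gromov.stratification}(2) could a priori be $\{x_0\}$, so it does not immediately lie in $L$.

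The intended argument, as the paper signals (``reasonable topological properties \ldots guaranteed by semi-analyticity''), is purely topological and avoids the isotropy entirely. By theorem \ref{thm.gromov.stratification}(1), $\alpha^+$ is semi-analytic and locally closed, so $\overline{\alpha^+}\setminus\alpha^+$ is a compact semi-analytic set of dimension strictly less than $\dim\alpha^+=1$ (by the frontier property of semi-analytic sets), hence a \emph{finite} set $F$ of points, each of which is a $Z_X$-fixed point by invariance and corollary \ref{coro.isotropy-dim2}. The $\omega$-limit set of $\alpha$ as $s\to\infty$ is a nonempty, compact, connected subset of $\overline{\alpha^+}$, and it is disjoint from $\alpha^+$ because $\alpha^+$ is an embedded (locally closed) $1$-submanifold; being a connected subset of the finite set $F$, it is a single point $x_1$. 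Together with $\alpha(s)\to x_0$ as $s\to 0^+$, this gives $\overline{\alpha^+}=\{x_0\}\cup\alpha^+\cup\{x_1\}$. The dynamical fact that $x_1$ is a hyperbolic sink with $a,b$ exchanged is then established \emph{afterward}, in Lemmas 5.6--5.7, and feeds into proposition \ref{prop.stable.flat}; it is an output of Lemma 5.4, not an input.
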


The proof is based on the reasonable topological properties of the
orbit closure $\overline{\alpha}^+$ guaranteed by semi-analyticity as
in theorem \ref{thm.gromov.stratification}.
A corollary is that $\overline{\alpha^+}$ admits a smooth
parametrization by a finite interval.

To compute the isotropy of $\{ \varphi^t_X \}$ at $x_1$, we compare
with the flow $\{ e^{t \hat{X}} \}$ along the lightlike
geodesic of $\mbox{Ein}^{1,2}$ corresponding to $\alpha$.
The path
$\hat{\alpha}$ has a \emph{development} in $\mbox{PO}(2,3)$, which is
the 1-parameter subgroup generated by $E_1$, viewed as an element of
$\lieg_{-1} \subset \lieg$.  The projection of this path to
$\mbox{PO}(2,3)/P \cong \mbox{Ein}^{1,2}$
is the curve $e^{s E_1}.o$.  In the homogeneous
coordinates on ${\bf RP}^4$, this path is $[1:s:0:0:0]$; it lies in the projectivization of
the totally isotropic plane $P_0 = \mbox{span} \{ e_0,e_1 \} \subset \RR^{2,3}$.

Take $R \in \lieg$ to be the generator of a rotation in the
plane $P_0 $.  Note that $\{  e^{\theta R} .o \}$ coincides with
$\alpha$ for $0 \leq \theta < \pi/2$.
Define
$\hat{\beta}(\theta) = \exp_{\hat{x}_0}(\theta R)$ and $\beta = \pi
\circ \hat{\beta}$.  There is a
path $\ell_\theta$ in $P$ such that
$$ \hat{\beta}(\theta) = \hat{\alpha}(s).\ell_\theta \qquad s= \tan
\theta, \ 0 \leq \theta < \pi/2$$

\begin{lemme}[\cite{mp.confdambra} Lemma 5.6] There is $\hat{x}_1 \in
  \pi^{-1}(x_1)$ equal to $\lim_{\theta \rightarrow \pi/2}
  \hat{\beta}(\theta)$.
  \end{lemme}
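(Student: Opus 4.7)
The aim is to show that the integral curve $\hat{\beta}:[0,\pi/2)\to\hat{M}$ of the vector field $\hat{R}$ with $\omega(\hat{R})\equiv R$ extends continuously at $\theta=\pi/2$ to a point lying over $x_1$. Since $M$ is compact and $\pi\circ\hat{\beta}(\theta)=\alpha(\tan\theta)$ converges to $x_1$ by the preceding lemma, any escape of $\hat{\beta}$ as $\theta\to\pi/2^-$ must occur in the $P$-fibre direction; ruling that out automatically places the limit in $\pi^{-1}(x_1)$.

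The strategy is to exploit the relation $\hat{\beta}(\theta)=\hat{\alpha}(\tan\theta)\cdot\ell_\theta$. For a sequence $\theta_k\to\pi/2$, set $s_k=\tan\theta_k$; by properness of the $P$-action on $\hat{M}$ together with compactness of $M$, there exist $q_k\in P$ and a subsequence with $\hat{\alpha}(s_k)\cdot q_k\to\hat{y}_1\in\pi^{-1}(x_1)$. Writing
\[
\hat{\beta}(\theta_k)=\bigl(\hat{\alpha}(s_k)\cdot q_k\bigr)\cdot\bigl(q_k^{-1}\ell_{\theta_k}\bigr),
\]
the problem reduces to convergence of $q_k^{-1}\ell_{\theta_k}$ in $P$. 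The factor $\ell_\theta$ arises from the explicit group identity $e^{\theta R}=e^{sE_1}\ell_\theta$ inside $G=\PO(2,3)$ and is computable in the matrix realization of $\lieg=\so(2,3)$; its divergence as $\theta\to\pi/2$ is concentrated in the $\liep^+$-direction dual to $E_1$. The renormalizations $q_k$ can be identified via Proposition \ref{prop:propagation_holonomy}: because $E_1$ is an expanding eigenvector of $\Ad(e^{-t\hat{X}})$, the hyperbolic holonomy sequence $\{e^{-t_k\hat{X}}\}$ at $x_0$ propagates along $\hat{\alpha}$, so $\varphi^{-t_k}_X\cdot\hat{\alpha}(s_k)=\hat{\alpha}(e^{-t_k(b-a)}s_k)$ remains bounded for a suitable $t_k\to\infty$. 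This identifies $q_k$, up to a bounded factor, with a $P$-element whose matrix form matches the divergent part of $\ell_{\theta_k}$.

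The principal obstacle is verifying this cancellation: although each of $\hat{\alpha}(s_k)$ and $\ell_{\theta_k}$ escapes, the product must remain bounded. The matching is a model-geometric computation in $G$, since both the growth of $\ell_\theta$ in $\liep^+$ and the eigenvalue $b-a$ governing $q_k$ are determined by the standard action of $\hat{X}$ on $\lieg_{-1}$. Once $q_k^{-1}\ell_{\theta_k}$ is shown to converge to some $\ell_\infty\in P$, we obtain $\hat{\beta}(\theta_k)\to\hat{y}_1\cdot\ell_\infty=:\hat{x}_1$. Uniqueness of the limit, independent of the approximating sequence, follows from analyticity of $\hat{\beta}$: the accumulation set of $\hat{\beta}(\theta)$ at $\theta=\pi/2$ is connected, lies over the single point $x_1$, and is stabilized by any residual right $P$-action arising from a different choice of $\theta_k$, which by dimension forces it to reduce to a single point.
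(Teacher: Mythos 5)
Your approach differs substantially from the paper's. The paper picks an arbitrary smooth lift $\tilde{\beta}$ of $\beta$ to $\hat{M}$ on the closed interval $[0,\pi/2]$, writes $\tilde{\beta}(\theta) = \hat{\beta}(\theta).p_\theta$, and shows that $\omega_P(p_\theta')$ solves a linear, first-order ODE with bounded coefficients; the bound on $\omega_P(p_\theta')$ plus completeness of $P$ in the left-invariant framing then gives convergence of $p_\theta$ and hence of $\hat{\beta}$. Your proposal instead decomposes $\hat{\beta}(\theta_k) = (\hat{\alpha}(s_k)q_k)(q_k^{-1}\ell_{\theta_k})$ and attempts to match the blow-up of $\ell_{\theta_k}$ against a renormalization $q_k$ identified through the hyperbolic holonomy $e^{-t_k\hat{X}}$.

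The gap is exactly where you flag it: ``The principal obstacle is verifying this cancellation.'' That cancellation \emph{is} the lemma. You have not computed $\ell_\theta$ (you assert without argument that its divergence is concentrated in the $\liep^+$-direction dual to $E_1$, which is not obvious and needs the explicit matrix identity $e^{\theta R} = e^{\tan\theta\, E_1}\ell_\theta$), you have not verified the precise rate at which $\ell_{\theta_k}$ escapes, and you have not shown this rate matches that of $q_k$. In fact your appeal to proposition \ref{prop:propagation_holonomy} does not immediately apply: that proposition requires $\Ad(p_k)(Y_k)$ to converge with $Y_k \to Y \in \lieg\setminus\liep$, whereas here $\Ad(e^{-t_k\hat{X}})(E_1)$ scales by an unbounded exponential, forcing $Y_k \to 0 \in \liep$, outside the proposition's hypotheses; what you actually need is the exact $P$-equivariance identity $\exp_{\hat{x}.p}(Y)=\exp_{\hat{x}}(\Ad p\cdot Y).p$, worked out explicitly. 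Finally, the closing uniqueness argument (``stabilized by any residual right $P$-action \dots by dimension forces it to a point'') is not sound: a connected compact subset of the fiber $\pi^{-1}(x_1)\cong P$ need not be a point. The correct and simpler observation is that once $\hat{\beta}([0,\pi/2))$ is bounded, $\hat{\beta}'=\hat{R}\circ\hat{\beta}$ is bounded, so $\hat{\beta}$ is Lipschitz and extends continuously; but this still requires establishing the boundedness you have only outlined.
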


  There is a smooth lift $\tilde{\beta}$ of $\beta$ to $\hat{M}$,
  related to $\hat{\beta}$ by
$$ \tilde{\beta}(\theta) = \hat{\beta}(\theta).p_\theta \qquad 0 \leq
\theta < \pi/2$$
for a path $p_\theta$ in $P$ defined on $[0,\pi/2)$.
It is shown that $\omega_P(p_\theta')$ satisfies a linear, first-order,
inhomogeneous ODE with coefficients all in terms of $\tilde{\beta}$
and its derivatives.
It follows that the
$\omega_P$-derivative of $p_\theta$ is bounded on $[0,\pi/2)$.  As $P$
is complete with respect to the left-invariant framing determined by
$\omega_P$,
 the limit of $p_\theta$ as $\theta \rightarrow
\pi/2$ exists, thus defining the corresponding limit of
$\hat{\beta}(\theta)$.

\begin{lemme}[\cite{mp.confdambra} Lemma 5.7]
The isotropy of $\varphi^t_X $  with respect to $\hat{x}_1$ is
$$ g^t = \left(
  \begin{array}{ccccc}
    e^{tb} &   &   &  &  \\
           &  e^{ta} &  &   & \\
           &   &  1  &  &   \\
           &   &   &   e^{-ta}  &  \\
           &   &  &   &  e^{-tb}
  \end{array}
\right) \in G_0 < \mbox{PO}(2,3)
$$
\end{lemme}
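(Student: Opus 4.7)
The strategy is to use the $\omega$-invariance of $\varphi^t_X$ to reduce the computation to a matrix calculation in the model group $G = \PO(2,3)$, and then transfer the result back by reusing the ODE analysis of the preceding lemma. Since $\varphi^t_X$ preserves $\omega$, it commutes with the Cartan exponential. Together with the right-$P$-equivariance $\exp_{\hat{z}\cdot p}(Y) = \exp_{\hat{z}}(\Ad(p) Y)\cdot p$ and the isotropy relation $\varphi^t_X(\hat{x}_0) = \hat{x}_0 \cdot g_0^t$, applied to $\hat{\beta}(\theta) = \exp_{\hat{x}_0}(\theta R)$, this gives
\[
\varphi^t_X(\hat{\beta}(\theta)) \;=\; \exp_{\hat{x}_0}\!\bigl(\theta\,\Ad(g_0^t) R\bigr) \cdot g_0^t.
\]
By the previous lemma, $\hat{\beta}(\theta) \to \hat{x}_1$ as $\theta \to \pi/2$, so the left side tends to $\hat{x}_1 \cdot p_t$, where $p_t \in P$ is the desired isotropy; hence the curve $\theta \mapsto \exp_{\hat{x}_0}(\theta Y_t)$ with $Y_t := \Ad(g_0^t) R$ converges as $\theta \to \pi/2$ to $\hat{x}_1 \cdot p_t\, g_0^{-t}$.

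Next I perform the analogous computation in the model, where the Cartan exp is the matrix exponential. Decompose $R = R_{-1} + R_{+1}$ along the $|1|$-grading of $\lieg$, so that $R_{-1} = E_1$ (the $\lieg_{-1}$-direction of $\hat{\alpha}$) and $Y_t = e^{t(b-a)} R_{-1} + e^{t(a-b)} R_{+1}$. A direct $5\times 5$ calculation gives $R_{\pm 1}^{\,2} = 0$ and $R_{-1} R_{+1} + R_{+1} R_{-1} = -\Pi$, where $\Pi$ is the diagonal projection onto $\mathrm{span}(e_0, e_1, e_3, e_4)$; hence $Y_t^{\,2} = -\Pi$, $Y_t^{\,3} = -Y_t$, and $\exp_G((\pi/2) Y_t) = (I - \Pi) + Y_t$. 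In the model, $\hat{x}_1 = \exp_G((\pi/2) R)$, so the identity above reads $\exp_G((\pi/2) Y_t) = \exp_G((\pi/2) R) \cdot p_t\, g_0^{-t}$. Combining with the $\Ad$-relation $\exp_G((\pi/2) Y_t) = g_0^t\,\exp_G((\pi/2) R)\, g_0^{-t}$ and solving yields
\[
p_t \;=\; \exp_G(-(\pi/2) R)\, g_0^t\, \exp_G((\pi/2) R) \;=\; \diag(e^{tb}, e^{ta}, 1, e^{-ta}, e^{-tb}) \;=\; g^t.
\]

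To transfer this to the Cartan geometry on $M$, I reuse the ODE argument from the proof of the preceding lemma. The points $\exp_{\hat{x}_0}(\theta Y_t)$ and $\hat{\beta}(\theta^*)$, for $\tan\theta^* = e^{t(b-a)} \tan\theta$, project to the same point of $M$, so they differ by a unique $q(\theta, t) \in P$. The $\omega_P$-derivative of $q(\cdot, t)$ satisfies a bounded first-order linear inhomogeneous ODE on $P$ depending smoothly on $t$, and completeness of $P$ in its left-invariant framing yields a well-defined limit of $q(\theta, t)$ as $\theta \to \pi/2$. Matching this limit with the model computation above forces $p_t = g^t$. The main obstacle is the uniform-in-$t$ control of this ODE; once in place, the algebraic identification of the limit is automatic, since both sides of the Cartan identity from the first step are explicitly computable in $G$ via $R$, $g_0^t$, and the developing element $\exp_G((\pi/2) R)$, with the Cartan curvature entering only into the convergence of $q(\theta, t)$ and not into its limit value.
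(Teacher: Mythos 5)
Your approach is correct and essentially the same as the paper's: the Cartan-exponential identity $\varphi^t_X(\hat\beta(\theta)) = \exp_{\hat x_0}\bigl(\theta\,\Ad(g_0^t)R\bigr)\,g_0^t$ together with the matrix conjugation $e^{-(\pi/2)R}\,g_0^t\,e^{(\pi/2)R}=g^t$ is exactly the ``shortcut'' the paper cites. One small correction: no uniform-in-$t$ ODE control is needed, and the Cartan curvature enters only via the preceding lemma's existence of $\hat x_1=\lim\hat\beta(\theta)$ --- since $\exp_{\hat x_0}(\theta Y_t)$ and $\hat\beta(\theta^*)$ lie over the \emph{same} base curve, the $P$-valued difference $q(\theta,t)$ satisfies $\omega_P(q')=Y_t-\Ad(q^{-1})(R\,d\theta^*/d\theta)$, which is purely $\lieg$-algebraic (curvature-free) and hence identical to the model computation $\exp_G(-\theta^*R)\exp_G(\theta Y_t)$, so the limit $p_t g_0^{-t}=g^tg_0^{-t}$ is immediate once $\hat x_1$ exists.
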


This value is obtained from the limit as $\theta
\rightarrow \pi/2$ of the holonomy of $\{ \varphi^t_X
\}$ at $\hat{\beta}(\theta)$.
A shortcut to computing this limit is given by computing in $\mbox{PO}(2,3)$ the conjugate $e^{- (\pi/2)\cdot R} e^{t \hat{X}} e^{(\pi/2)  \cdot R} = g^t$.

Now the isotropy $(\hat{I}_X)_{\hat{x}_1} = \{ g^t \}$ is contracting,
so $(M,g)$ is conformally flat by proposition \ref{prop.stable.flat}.

\subsection{All unipotent isotropy not possible}

Now suppose that the isotropy of $X$ at $\hat{x}_0$ is a nilpotent
element $\hat{X} \in \lieg_0$ with
respect to $\hat{x}_0 \in \pi^{-1}(x_0)$.  Here we briefly recall the proof of \cite[Prop 5.8]{mp.confdambra}.

Take $\mathcal{U} \subset \lieg_{-1}$ such that $\pi \circ
\exp_{\hat{x}_0}$ is a diffeomorphism of $\mathcal{U}$ onto its
image, which we denote $U$.  For $v \neq 0$ in $\mathcal{U}$, let $y_0
= \exp_{\hat{x}_0}(v)$.  The connected component
of $y_0$ in $Z_X.y_0 \cap U$ equals $\pi \circ
\exp_{\hat{x}_0}((\hat{I}_X)_{\hat{x}_0}.v \cap \mathcal{U})$ and is bounded away from
$x_0$.  By theorem \ref{thm.gromov.stratification}, $\overline{Z_X.y_0}$ is
locally connected and contains $Z_X.y_0$ as a relatively open subset.
Therefore, $x_0 \notin \overline{Z_X.y_0}$ for any $y_0 \neq x_0$ in
$U$.

Choose $v$ not fixed by the isotropy.  For $y_0$ as above, $\overline{Z_X.y_0}$ contains
a point $x_1 \neq y_0$ with closed $Z_X$-orbit by theorem \ref{thm.gromov.stratification}. This closed orbit is necessarily 
a fixed
point by theorem \ref{thm.isotropy.algebraic}.  We can assume, by the results above for hyperbolic isotropy,
that the isotropy of $Z_X$ is again unipotent at
$x_1$.   Now for $y_1 \in Z_X.y_0$ sufficiently close to $x_1$, we have
$x_1 \in \overline{Z_X.y_1}$.  On the other hand, the argument in the
previous paragraph gives that $Z_X.y_1$ is bounded away from $x_1$---contradiction.

\section{Appendix: Proof of theorem \ref{thm.translation.holonomy}}
\label{sec.appendix}
This appendix is devoted to the proof of theorem
\ref{thm.translation.holonomy}. We will actually prove a more general
statement, for smooth manifolds and sequences of local conformal transformations.

 \begin{theoreme}
     \label{thm.translation.holonomy.smooth}
Let $(M,g)$ be a smooth, compact, 3-dimensional Lorentzian manifold.  Let $\{
f_k \} \subset \mbox{Conf}^{loc}(M,[g])$ be an unbounded sequence
defined on a common neighborhood $U$ of $x \in M$.  If $\{ f_k \}$
admits a holonomy sequence at $x$ contained in $P^+$, then
 there exists a nonempty open subset $U \subset M$ which  is conformally flat.
   \end{theoreme}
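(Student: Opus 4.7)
Write $p_k = \exp \xi_k$ with $\xi_k \in \liep^+ \cong \RR^{1,2*}$ unbounded; after extracting a subsequence and rescaling, $\xi_k/\|\xi_k\| \to \eta \in \liep^+ \setminus\{0\}$. Since vertical equivalence of holonomy sequences moves $\eta$ within its $\CO(1,2)$-orbit in $\liep^+$, I would split into cases on the causal type of $\eta$: lightlike, spacelike, or timelike, with the lightlike case being the principal one.

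The first step is to propagate the holonomy along a curve through $x$. For $v \in \lieg_{-1}$,
$$ \Ad p_k\, v \;=\; v + [\xi_k, v] + \tfrac12 [\xi_k, [\xi_k, v]],$$
and in dimension three a direct computation of the bracket $\liep^+ \times \lieg_{-1} \to \lieg_0$ shows that the kernel of $(\mathrm{ad}\,\eta)^2$ in $\lieg_{-1}$ is the null line dual to $\eta$ when $\eta$ is lightlike, and is zero otherwise. For $\eta$ lightlike, I would apply Proposition \ref{prop:propagation_holonomy}, correcting $v$ by suitable $\lieg_0$- and $\lieg_{+1}$-terms so that $\Ad p_k(Y_k)$ converges in $\lieg$, to obtain a curve $\gamma = \pi(\exp_{\hat x}(\RR v))$ through $x$ on which $\{p_k\}$ remains a holonomy sequence for $\{f_k\}$ (with appropriate lifts). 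For $\eta$ spacelike or timelike there is no propagation direction in $\lieg_{-1}$; these sub-cases I would handle separately using the explicit conformal $P^+$-action on the model $\Ein^{1,2}$ to derive a direct contradiction, or by showing that only the lightlike direction is compatible with the hypothesis that both $\{D_x f_k\}$ and $\{(D_x f_k)^{-1}\}$ are bounded.

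The second step is to extract curvature vanishing from the propagated holonomy. Since $P^+$ acts trivially on the Cotton module $\mathbb{U}$, Proposition \ref{prop:as_for_inv_sections} applied to $\kappa$ itself yields no constraint. I would apply it instead to the first-jet map $D^{(1)}\kappa : \hat M \to \mathbb{U}^{(1)}$, on which $P^+$ acts by the affine formula
$$ \exp \xi \cdot (u, u') \;=\; (u, \; u' + L_\xi u),$$
for an explicit linear operator $L_\xi : \mathbb{U} \to \lieg_{-1}^* \otimes \mathbb{U}$ coming from the prolongation of the Cartan connection. Approximate stability of $D^{(1)}\kappa(\hat y)$ for $\hat y$ lifting points of $\gamma$ then forces $L_\eta \kappa(\hat y) = 0$. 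Iterating with the higher-order jets $D^{(i)}\kappa$ produces a chain of constraints $L_\eta^i \kappa = 0$ along $\gamma$, and combining these with the Bianchi symmetry recorded in section \ref{sec.cotton.tensor} on the five-dimensional module $\mathbb{U}$ forces $\kappa \equiv 0$ on an open neighborhood of $\gamma$. By the Cotton-York characterization of conformal flatness in dimension three, that neighborhood is the desired conformally flat open subset of $M$.

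The main obstacle is the explicit identification of the prolongation operator $L_\xi$ and the verification that finitely many iterates $L_\eta^i\,\kappa$ already span $\mathbb{U}$; this reduces to a representation-theoretic computation for the nilpotent action of $P^+$ on the higher-jet modules $\mathbb{U}^{(i)}$. The smallness of $\mathbb{U}$ in dimension three, together with its explicit weight decomposition under $A \subset G_0$ described in section \ref{sec.cotton.tensor}, makes this computation tractable. The overall scheme parallels---while strengthening to arbitrary sequences rather than one-parameter subgroups---the argument of \cite[Thm 1.4]{fm.champsconfs}.
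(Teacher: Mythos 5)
There is a genuine gap, in fact two. The paper's proof works entirely in the model: it chooses a conformal geodesic segment $\beta$ emanating from $o$ in $\Ein^{1,2}$, shows via direct matrix computation that $p_k.[\beta]\to o$ in a \emph{stable} way (cases split by the growth of $t_k|q(v_k)|$ rather than by the causal type of $\eta$), and then invokes Proposition \ref{prop.pullback} to convert the resulting pointwise holonomy at $\beta(1)$ into a stable (linear, contracting or balanced) holonomy sequence for $\{f_k\}$ at a new base point; Proposition \ref{prop.stable.flat} then gives Cotton vanishing on a full neighborhood. Your proposal substitutes for this a direct $\Ad$-propagation plus constraints on jets of curvature, and both halves have problems.

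First, the $\Ad$-propagation is much more delicate than you suggest. For $p_k=\exp\xi_k\in P^+$ unbounded with $\xi_k/\lVert\xi_k\rVert\to\eta$, requiring $\Ad p_k(Y_k)$ to converge in $\lieg$ (as Proposition \ref{prop:propagation_holonomy} does) means controlling $\lVert\xi_k\rVert[\eta_k,v_k]$ and $\lVert\xi_k\rVert^2[\eta_k,[\eta_k,v_k]]$, not merely the limit brackets; the ``correction by $\lieg_0$- and $\lieg_{+1}$-terms'' you describe, pushed to its conclusion, just sets $Y_k=\Ad p_k^{-1}v$, which diverges. Whether a convergent $Y_k$ exists depends on the rate at which $\eta_k$ approaches the null cone relative to $\lVert\xi_k\rVert$---exactly the dichotomy the paper encodes in $t_k|q(v_k)|\to\infty$ vs.\ bounded, which is finer than the causal type of $\eta$. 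Relatedly, your plan to discard the spacelike and timelike cases by deriving a contradiction is incorrect: those cases are not excluded by the hypothesis, and the paper's Case~1 ($t_k|q(v_k)|\to\infty$) covers them and leads to conformal flatness, not absurdity. Second, and more seriously, the last step does not close. You correctly observe that $P^+$ acts trivially on the Cotton module $\mathbb{U}$, so naive approximate stability of $\kappa$ gives nothing. But even granting the affine action on $D^{(1)}\kappa$ and the chain $L_\eta^i\kappa=0$ \emph{along a curve $\gamma$}, this constrains $\kappa$ only on $\gamma$ (a one-dimensional set) and, at best, certain directional derivatives there. For a merely smooth metric this does not force $\kappa$ to vanish on any open set---and Theorem \ref{thm.translation.holonomy.smooth} is stated for smooth $M$. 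The mechanism the paper uses to pass from pointwise information to open-set vanishing is precisely the stability of the new holonomy sequence at $\beta(1)$: stable linear holonomy propagates to a whole neighborhood (Corollary \ref{coro.stable}) and there annihilates the Cotton module because the relevant weights $3\alpha+w\beta$ all blow up (Proposition \ref{prop.balanced.flat}). Your scheme never produces such a stable sequence, so the propagation to an open set is missing.
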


 It is clear that theorem \ref{thm.translation.holonomy} follows directly from theorem \ref{thm.translation.holonomy.smooth}
\subsection{Conformal geodesic segments}
\label{sec.conformal.geodesic}
 
The strategy to prove theorem \ref{thm.translation.holonomy.smooth} is to exhibit dynamical properties of the sequence 
$\{ f_k \}$
 which force conformal flatness on an open subset. The dynamical
 behavior of $\{ f_k \}$ around a point $x$ is understood via
 the action of its holonomy sequences on \emph{conformal geodesics} in
 the model space, introduced below.

The $3$-dimensional Minkowski space will be taken to have the
quadratic form $q(x):=2x_1x_3+x_2^2$, and will be denoted by $\RR^{1,2}$.  
A conformal immersion $j^o: \RR^{1,2} \to \Ein^{1,2}$ is given in
homogeneous coordinates on ${\bf RP}^4$ by the formula
$$ j^o : x \mapsto \left[ 1:x_1:x_2:x_3:-\frac{q(x)}{2} \right],$$
mapping the origin in $\RR^{1,2}$ to $o \in \Ein^{1,2}$.
 For us a {\it conformal geodesic segment} of $\mbox{Ein}^{1,2}$
emanating from $o$ will be a curve $\alpha: [0,1] \to \Ein^{1,2}$
of the form 
  $$ s \mapsto p.j^o(sw),$$
  where $p \in P$ and $w \in \RR^{1,2}$.
  
 \subsection{Local dynamics via conformal geodesic segments}
  
 Let $\{ p_k\}$ be a holonomy sequence for $\{f_k\} \subset
 \mbox{Conf}^{loc}(M)$ at $x \in M$.
   It is an unbounded sequence of conformal transformations of $\Ein^{1,2}$
   fixing $o$, which in turn admits holonomy sequences at other points of $\Ein^{1,2}$. 
 The following proposition, borrowed from \cite{frances.locdyn}, explains how conformal geodesic segments relate holonomy sequences of $\{f_k\}$
  and $\{p_k\}$.

\begin{proposition}[see \cite{frances.locdyn} Prop 6.3]
 \label{prop.pullback}
 Let $(M,g)$ be a smooth Lorentzian manifold. Let $\{ f_k \} <
 \mbox{Conf}(M,[g])$ with holonomy sequence $\{ p_k \}$
along $\hx_k \to \hx$ in $\hat{M}$. Assume that there exists a conformal geodesic segment
  $\beta: [0,1] \to \Ein^{1,n-1}$ emanating from $o$ such that
   $\lim_{k \to  \infty}p_k.[\beta]=o$.
    Then any pointwise holonomy  sequence of $\{ p_k \}$ at $\beta(1)$ admits a subsequence which is a holonomy sequence 
  for  $\{ f_k \}$ with respect to some converging sequence $\hy_k \to \hy$ in $\hat{M}$.
\end{proposition}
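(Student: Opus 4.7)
The plan is to exploit the naturality of the Cartan exponential to transfer the holonomy data from the model curve $\beta$ to a curve in $\hat M$ emanating from $\hat x_k$. The two key identities I will use are: (i) since $f_k$ preserves $\omega$, one has $f_k \circ \exp_{\hat{z}} = \exp_{f_k.\hat{z}}$; and (ii) for $q \in P$, one has $\exp_{\hat{z}.q}(v) = \exp_{\hat{z}}(\mathrm{Ad}(q)v) \cdot q$. Together these let me push the $f_k$-action across the Cartan exponential and absorb any right $P$-translate into the argument.

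Step 1 (construction of $\hat{y}_k$). Write the segment as $\beta(s) = p \cdot \exp(sw) \cdot o$ with $p \in P$ and $w \in \lieg_{-1}$. The natural candidate is
$$\hat{y}_k := \exp_{\hat{x}_k \cdot p}(w) \in \hat{M}.$$
The domain of $\exp_{\hat{x}\cdot p}$ may be too small to contain $w$; to address this, subdivide $[0,1]$ into sub-intervals $[s_{i-1},s_i]$ such that each incremental step $\exp_{\hat{y}_k^{(i-1)} \cdot \sigma_i}(w_i)$ stays within an exponential chart. Finitely many steps suffice by compactness of $[0,1]$ and openness of the exponential domain. By continuity of the Cartan exponential and of $\hat{x}_k \to \hat{x}$, the resulting points satisfy $\hat{y}_k \to \hat{y}_\ast := \exp_{\hat{x}\cdot p}(w)$.

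Step 2 (computation of $f_k.\hat{y}_k.q_k^{-1}$). Applying identities (i) and (ii) in sequence gives
$$f_k.\hat{y}_k.q_k^{-1} = \exp_{f_k.\hat{x}_k.p \cdot q_k^{-1}}\!\bigl(\mathrm{Ad}(q_k)\, w\bigr),$$
where I rewrite the base point as $(f_k.\hat{x}_k.p_k^{-1}) \cdot (p_k p q_k^{-1})$. The first factor converges to $\hat{y}$ by hypothesis, so everything now rests on controlling the $P$-element $p_k p q_k^{-1}$ and the Lie-algebra vector $\mathrm{Ad}(q_k)w$.

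Step 3 (convergence via the pointwise-holonomy hypothesis). By definition of pointwise holonomy at $\beta(1)$, the sequence $p_k p \exp(w) q_k^{-1}$ converges in $G$ to some $\hat{b}$; since $p_k.\beta(1) \to o$ and $P$ is closed, $\hat{b} \in P$. From the identity
$$p_k p \, q_k^{-1} = \bigl(p_k p \exp(w) q_k^{-1}\bigr) \cdot \exp\!\bigl(-\mathrm{Ad}(q_k)w\bigr),$$
with the left side confined to $P$, the $P$-convergence of the first factor forces, after passing to a subsequence, joint convergence of $\mathrm{Ad}(q_k)w$ to some $v \in \lieg$ and of $p_k p q_k^{-1}$ to some $q \in P$. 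Substituting into the expression from Step 2 then gives
$$f_k.\hat{y}_k.q_k^{-1} \longrightarrow \exp_{\hat{y}\cdot q}(v) =: \hat{y}' \in \hat{M},$$
which is precisely the required assertion that a subsequence of $\{q_k\}$ is a holonomy sequence for $\{f_k\}$ relative to $\hat{y}_k \to \hat{y}_\ast$.

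The main obstacle is Step 3, namely extracting the convergent subsequence: the argument relies on the fact that $\mathrm{Ad}(q_k)w$ and $p_k p q_k^{-1}$ are coupled through a bounded quantity, so the $P$-structure (the parabolic filtration and its interaction with $\lieg_{-1}$) forces one to converge along a subsequence as soon as the other does. The secondary issue is the domain of the Cartan exponential in Step 1, which is handled by path subdivision as described.
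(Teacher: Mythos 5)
The paper does not actually prove this proposition; it imports it from \cite{frances.locdyn} (Prop.\ 6.3), so the comparison is with the standard development argument there. Your overall strategy (develop $\beta$ at $\hat{x}_k.p$ via the Cartan exponential and push the computation through the equivariance identities) is the right starting point, and the algebra in Step 2 is correct. But there are two genuine gaps, and tellingly your argument never uses the hypothesis that $p_k.[\beta]\to o$ for the \emph{whole} segment --- only the endpoint $\beta(1)$ enters --- whereas that hypothesis is exactly what is needed to close both gaps.

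First, the domain issue in Step 1 is not fixable by subdivision. The point $\exp_{\hat{x}_k.p}(w)$ is $\varphi^1_{\hat{w}}(\hat{x}_k.p)$ for the $\omega$-constant vector field $\hat{w}$ on the non-compact manifold $\hat{M}$, and its maximal interval of existence can be shorter than $[0,1]$ (conformal geodesics are not complete, even when $M$ is compact). Subdividing $[0,1]$ and concatenating exponential steps along the same field reproduces the same integral curve and the same escape time; ``openness of the exponential domain'' gives no uniform lower bound on the step size. The actual proof must transport the curve: writing $p_kp\exp(sw)=\epsilon_k(s)r_k(s)$ with $\epsilon_k(s)\to e$ uniformly (this is where $p_k.[\beta]\to o$ on all of $[0,1]$ is used) and $r_k(s)\in P$, the curve $s\mapsto f_k.\exp_{\hat{x}_k.p}(sw).r_k(s)^{-1}$ is the development of $\epsilon_k$ starting at $f_k.\hat{x}_k.p_k^{-1}\to\hat{y}$, hence stays in a fixed compact neighborhood of $\hat{y}$; this is what proves that $\hat{\gamma}_k(s)$ exists up to $s=1$ for large $k$ and that $f_k.\hat{y}_k.r_k(1)^{-1}$ converges. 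Second, the subsequence extraction in Step 3 does not follow. From $p_kp\,q_k^{-1}\in P$ and $p_kp\exp(w)q_k^{-1}\to\hat{b}\in P$ you only obtain $\exp(-\Ad(q_k)w)\in \hat{b}_k^{-1}P$ with $\hat{b}_k\to\hat{b}$, i.e.\ a constraint on $\exp(-\Ad(q_k)w)$ \emph{modulo} $P$; this does not bound $\Ad(q_k)w$ in $\lieg$ (for unbounded $q_k\in P$ and $w\in\lieg_{-1}$, $\Ad(q_k)w$ is typically unbounded in its $\lieg_0\oplus\lieg_1$ components), nor does it bound $p_kp\,q_k^{-1}$ in $P$. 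The correct route is again via the decomposition above: one first shows $r_k(1)$ is a holonomy sequence for $\{f_k\}$ at $\hat{y}_k=\hat{\gamma}_k(1)$, and then observes that any pointwise holonomy sequence $q_k$ of $\{p_k\}$ at $\beta(1)$ satisfies $r_kq_k^{-1}\to\hat{b}$, so $q_k$ is vertically equivalent to $r_k(1)$ and is therefore also a holonomy sequence after passing to a subsequence. As written, your proof asserts rather than proves the two convergences on which everything rests.
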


This proposition, together with  proposition \ref{prop.stable.flat},
brings us closer to theorem \ref{thm.translation.holonomy.smooth}, through the following corollary:
\begin{corollaire}
 \label{coro.flat}
 Let $(M,g)$ be a smooth, $3$-dimensional, Lorentzian manifold. Let $\{ p_k \}$
  be a holonomy sequence for $\{ f_k \}$ along $\hx_k \to \hx$ in $\hat{M}$. Suppose there exists a conformal geodesic segment
  $\beta: [0,1] \to \Ein^{1,2}$ emanating from $o$ such that
   $\lim_{k \to  \infty}p_k.[\beta]=o$. If $\{ p_k \}$ admits a pointwise holonomy sequence at $\beta(1)$
    which is stable, then a nonempty open subset $U \subset M$ is conformally flat. 
\end{corollaire}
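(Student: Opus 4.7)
The plan is to chain together Proposition \ref{prop.pullback}, Corollary \ref{coro.stable}, and Proposition \ref{prop.balanced.flat}. Denote by $\{q_k\}$ the stable pointwise holonomy sequence of $\{p_k\}$ at $\beta(1)$ guaranteed by hypothesis. Since stability is preserved under extracting subsequences, every subsequence of $\{q_k\}$ is again stable, and in particular balanced or contracting in the sense of Definition \ref{def:holonomy_trichotomy}.

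First, I would invoke Proposition \ref{prop.pullback}. The hypothesis $\lim_{k\to\infty} p_k.[\beta]=o$ is exactly what that proposition requires, and it produces a subsequence of $\{q_k\}$ — which I relabel $\{q_k\}$ — together with a converging sequence $\hy_k \to \hy$ in $\hat{M}$ such that $\{q_k\}$ is a holonomy sequence for the corresponding subsequence of $\{f_k\}$ with respect to $\hy_k \to \hy$. In particular, $\{q_k\}$ is a holonomy sequence for (a subsequence of) $\{f_k\}$ at the point $y := \pi(\hy)$, and all the $f_k$ are defined on a common neighborhood of $y$ since $\hy_k \to \hy$.

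Next I would apply Corollary \ref{coro.stable} to transport this holonomy sequence off of the single point $y$: because $\{q_k\}$ is stable, it is also a holonomy sequence for $\{f_k\}$ at every point of an open neighborhood $V$ of $y$. Finally, Proposition \ref{prop.balanced.flat} applies at each $x \in V$, because the (stable) sequence $\{q_k\}$ is balanced or contracting and serves as a holonomy sequence at $x$; consequently the Cotton-York tensor vanishes identically on $V$. In dimension three this is equivalent to conformal flatness of $V$, so $U := V$ is the desired nonempty open conformally flat subset.

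All ingredients are already established in the preceding sections, so no serious obstacle arises — the corollary is essentially a clean assembly of three earlier statements. The only bookkeeping point to watch is that Proposition \ref{prop.pullback} only yields a holonomy sequence for a \emph{subsequence} of $\{f_k\}$, but since conformal flatness is a property of $(M,[g])$ rather than of the sequence, this passage to a subsequence is harmless.
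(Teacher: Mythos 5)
Your proof is correct and follows essentially the same route as the paper, which derives Corollary \ref{coro.flat} from Proposition \ref{prop.pullback} together with (the local part of) Proposition \ref{prop.stable.flat}. In fact your version is slightly more careful: since the present corollary is stated for smooth $(M,g)$, one cannot literally cite Proposition \ref{prop.stable.flat}, which carries a real-analyticity hypothesis and a global conclusion; by unfolding it into Corollary \ref{coro.stable} followed by Proposition \ref{prop.balanced.flat} you extract exactly the part of its proof that does not use analyticity, namely propagation of the stable holonomy to a neighborhood and vanishing of the Cotton--York tensor there, which is what the corollary asserts. Your remark that passing to a subsequence of $\{f_k\}$ is harmless is also the right observation.
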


Recall definition \ref{def:holonomy_trichotomy} for stable holonomy sequences.
 
 \subsection{Lemma ensuring stable holonomy sequence}
 \label{sec.technical.lemma}
 
 Theorem \ref{thm.translation.holonomy.smooth} is a direct consequence of Corollary \ref{coro.flat} and 
 Lemma \ref{lem.translations} below, which is  the main technical result of this section.

\begin{lemme}
 \label{lem.translations}
 Let $\{ p_k \}$ be a sequence of $P^+$. After passing to a subsequence, there exists 
  a conformal geodesic segment $\beta: [0,1] \to \Ein^{1,2}$ emanating
 from $o$ such that  $\lim_{k \to  \infty}p_k.[\beta]=o$, and such that $\{ p_k\}$
  admits a pointwise holonomy sequence at $\beta(1)$ which is stable. 
\end{lemme}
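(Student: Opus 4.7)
The plan is to establish Lemma \ref{lem.translations} by explicit computation in the model $G = \PO(2,3)$, reducing everything to a Bruhat-type decomposition. Write $p_k = \exp(\xi_k)$ with $\xi_k \in \liep^+ \cong \RR^{1,2*}$, so $\|\xi_k\| \to \infty$, and pass to a subsequence so that $\xi_k/\|\xi_k\| \to \eta$. The $G_0 = \CO(1,2)$-action on $\liep^+$ by the adjoint representation has orbits classified by the causal type of $\eta$ (spacelike, timelike, or lightlike). By choosing $\beta(s) = p_0 \cdot j^o(sw)$ for a suitable $p_0 \in G_0 \subset P$, equivalently conjugating $p_k$ by $p_0^{-1}$, we reduce to the case where $\eta$ is in one of three standard positions.

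With $\eta$ normalized, I choose $w \in \lieg_{-1} \cong \RR^{1,2}$ generic---transverse to the null cone determined by $\eta^\#$---and set $\beta(s) = p_0 \cdot j^o(sw)$. In the big Bruhat cell $T \cdot G_0 \cdot P^+$, where $T = \exp(\lieg_{-1})$, the product $\exp(\xi_k) \exp(w)$ decomposes as
\[
\exp(\xi_k)\cdot \exp(w) = \exp(w_k') \cdot g_k \cdot \exp(\xi_k')
\]
with $w_k',\, g_k,\, \xi_k'$ all rational in $(\xi_k, w)$. The conformal denominator $\mathcal{D}(\xi_k,w) = 1 + \langle \xi_k, w\rangle + \tfrac{1}{4}q^*(\xi_k)q(w)$ tends to infinity by the genericity of $w$, and one has $w_k' = w/\mathcal{D}\cdot(1+o(1)) \to 0$. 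This shows $p_k.\beta(1) = \exp(w_k').o \to o$; the same computation applied to $sw$ in place of $w$ gives convergence of $p_k.\beta$ to $o$ on $(0,1]$, while for $s$ near $0$, $\beta(s)$ is already close to the fixed point $o$. The pointwise holonomy at $\beta(1)$ with lift $\hat{\beta}(1) = p_0 \exp(w)$ is then $q_k = g_k \exp(\xi_k')$, since $p_k \hat{\beta}(1) q_k^{-1} = p_0 \cdot \exp(w_k')$ converges.

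Stability of $q_k$ reduces to two claims: that $\xi_k'$ remains bounded in $\liep^+$, so that $q_k$ is vertically equivalent to $g_k$ and hence linear after equivalence; and that the $A$-component of $g_k$ diverges in the contracting or balanced regime of $A'$. The first is the analog of the $\SL_2$ identity $\xi' = \xi/(1 + \xi w)$, which stays bounded as $\xi \to \infty$; in $\PO(2,3)$ the computation proceeds by explicit rational formulas of the same shape. The second is handled by a case analysis on the causal type of $\eta$ and the sign of $q(w)$: the conformal factor of $g_k \in \CO(1,2)$ is essentially $\mathcal{D}(\xi_k, w)$, and its $A$-direction is controlled by the pair $(\eta, w)$. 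The main obstacle is the lightlike case, where $\mathcal{D}$ grows only linearly rather than quadratically in $\|\xi_k\|$; here $w$ must be tailored so that the resulting $A$-component ends up in the stable cone rather than in a mixed or bounded-distortion direction. Once stability is established, corollary \ref{coro.flat} yields an open conformally flat subset, completing the proof.
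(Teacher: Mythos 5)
Your proposal shares the core computation with the paper---in effect you are computing the $\lieg_{-1}$-component $w_k'$ of $p_k \exp(w)$ in the Bruhat decomposition, which is exactly what the paper does via formula (\ref{eq.formule1})---but the case analysis and the derivation of stability both have genuine gaps.

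First, the dichotomy you use (causal type of $\eta = \lim \xi_k/\|\xi_k\|$) is not the right one, and this is not merely a cosmetic difference. In the paper's notation, write $\xi_k = t_k v_k^*$ with $\|v_k\|=1$; then the correct split is on whether $t_k|q(v_k)| \to \infty$ or stays bounded, which is a strictly finer condition than the causal type of $v = \lim v_k$ (equivalently of $\eta$): for example $q(v_k) = t_k^{-1/2}$ gives $\eta$ lightlike but $t_k|q(v_k)| \to \infty$, and the argument there proceeds as in the spacelike/timelike case, not the lightlike one. Conversely, in the paper's second case ($t_k q(v_k)$ bounded), the geodesic segment $\beta$ must be chosen with $q(x)=0$, i.e.\ \emph{on} the null cone of $\lieg_{-1}$, in order to kill the quadratic term $\frac{t_k^2 u^2 q(v_k)q(x)}{4}$ of the denominator $\mathcal{D}$---otherwise the sign of that term is not under control and $\mathcal{D}$ may vanish for some $u\in(0,1]$. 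This directly contradicts your prescription of $w$ ``generic---transverse to the null cone,'' so your choice of $w$ does not work in that regime.

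Second, you never actually establish stability of the resulting pointwise holonomy sequence. You outline the claims (boundedness of $\xi_k'$, that the $A$-component of $g_k$ lands in the contracting/balanced cone) and then acknowledge the lightlike case as the ``main obstacle'' where ``$w$ must be tailored''---this is precisely the part that needs a proof. The paper sidesteps this by not computing $g_k$ explicitly at all: it verifies the uniform estimate $\sup_{u\in[0,1]}\|x_k(u)\| \to 0$ and the stronger statement $p_k(j^o(x_k)) \to o$ for \emph{all} sequences $x_k \to x$, which is the topological stability criterion of \cite[Def.\ 4.1]{frances.degenerescence}, and then invokes \cite[Lemma 4.3]{frances.degenerescence} to deduce a pointwise stable holonomy sequence. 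Your route of explicitly extracting the $KAK$-components of $g_k$ could in principle be pushed through, but it would require redoing the case analysis with the correct splitting variable $t_k|q(v_k)|$, choosing $x$ lightlike in the bounded case, and carrying out the $A$-component computation you have left as a sketch; as written, the lightlike case does not close.
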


\begin{preuve}
  Denote the Euclidean norm on $\RR^3$ by $\lVert \cdot \rVert$.
  Write
  $$p_k=\left(  \begin{array}{ccc}
1 & t_kv_k^* & -\frac{t_k^2 q(v_k)}{2}\\
0 & I_3 & -t_kv_k \\
0&0&1\\
                       
                     \end{array}
 \right ) \in P^+ < \mbox{O}(2,3).$$
Here $v_k$ is a sequence of $\RR^{1,2}$ satisfying $\lVert v_k \rVert =1$, and $t_k \geq 0$.
 The expression $v_k^*$ stands for $v_k^t {\mathbb I}$, where
 ${\mathbb I}$ is as in section \ref{sec.cotton.tensor}.
 Observe that $\{ p_k \}$, hence $\{ t_k \}$, is unbounded, because
 $\{ f_k \}$ is unbounded. After taking a subsequence, we assume that $t_k \to  \infty$, and 
  that there is a vector $v = \lim v_k$.
 
Recall the  conformal immersion $j^o: \RR^{1,2} \to \Ein^{1,2}$ from
section \ref{sec.conformal.geodesic} above.
 Let $x_k \rightarrow x \in \RR^{1,2}$. For $u \in [0,1]$, and $k \in \NN$,
   define $\beta_k(u):=j^o(ux_k)$, and $\beta(u):=j^o(ux)$. Observe that each $\beta_k$ and $\beta$ are conformal geodesic segments 
  emanating from $o$. 
 Define $x_k(u):=(j^o)^{-1}(p_k.\beta_k(u))$.  From the matrix expression of $p_k$, and the formula for $j^o$, 

\begin{equation}
\label{eq.formule1}
x_k(u) = \left( 1 + t_ku\langle x_k,v_k \rangle + \frac{ t_k^2 u^2
    q(v_k)q(x_k)}{4} \right)^{-1} \cdot \left(ux_k + \frac{t_k u^2q(x_k)}{2} v_k \right).
\end{equation}

After possibly taking a further subsequence of $\{ p_k \}$, we may assume that $ t_k |q(v_k)|$ converges in
$[0,  \infty]$.  There are two subcases:

\subsubsection*{First case: $t_k |q(v_k)| \rightarrow  \infty$.}

In this case, $q(v_k)$ is nonzero for  $k$ large
enough, so that after perhaps taking a subsequence of $\{ p_k \}$,
 we may assume that the sign of $q(v_k)$ is constant.  Choose
 $\epsilon=\pm 1$ so that $\epsilon q(v_k) \geq 0$ for all
 sufficiently large $k$.
 Choose $x$ such that 
 \begin{enumerate}
  \item[(a)] $\epsilon q(x)$ is positive.
  \item[(b)] $\langle x,v \rangle$ is positive.
  \end{enumerate}

%
%

Let $u \in (0,1]$ and write:
 $$1 + t_ku\langle x_k,v_k \rangle + \frac{ t_k^2 u^2 q(v_k)q(x_k)}{4}=u^2 t_k^2 \epsilon q(v_k) \left( \frac{\epsilon q(x_k)}{4}+
 \frac{\langle x_k,v_k \rangle}{u t_k  \epsilon q(v_k)} +\frac{1}{ u^2t_k^2 \epsilon q(v_k)}\right).$$
 Under the assumption $t_k |q(v_k)| \to  \infty$,
 $$\lim_{k \to  \infty} \left( \frac{\epsilon q(x_k)}{4}+
 \frac{\langle x_k,v_k \rangle}{u  t_k \epsilon q(v_k)} +\frac{1}{ u^2t_k^2 \epsilon q(v_k)}\right)=\frac{\epsilon q(x)}{4},$$
 so that
  for $k$ big enough, 
  $$\left( \frac{\epsilon q(x_k)}{4}+
 \frac{\langle x_k,v_k \rangle}{u  t_k \epsilon q(v_k)} +\frac{1}{ u^2t_k^2 \epsilon q(v_k)}\right) \geq \frac{\epsilon q(x_k)}{8}.$$
 
 If $t_ku \geq 1$, we can infer from (\ref{eq.formule1}) and the previous inequalities that
 \begin{equation}
 \label{eq.formule3}
  \lVert x_k(u) \rVert \leq \frac{8}{ t_k q(v_k)q(x_k)} \left( \lVert
    x_k \rVert + \frac{|q(x_k)|}{2}   \right).
 \end{equation}
Observe that (\ref{eq.formule3}) also holds trivially if $u=0$.
 
Suppose $t_ku \leq 1$, and note that conditions $(a)$ and $(b)$ on $x$ imply that for  $k$ big enough,
$$1 + t_ku\langle x_k,v_k \rangle + \frac{ t_k^2 u^2 q(v_k)q(x_k)}{4} \geq 1.$$
 Then for $k$ large enough,
 \begin{equation}
 \label{eq.formule4}
 \lVert x_k(u) \rVert \leq u \lVert x_k \rVert + \frac{u^2 t_k
   |q(x_k)| }{2} \leq \frac{1}{t_k} \left( \Vert x_k \rVert + \frac{|q(x_k)|}{2}  \right)
\end{equation}
From 
(\ref{eq.formule4}), we infer:
 \begin{equation}
  \label{eq.formule5}
  \lim_{k \to  \infty}{\sup_{u \in [0,1]} \lVert x_k(u) \rVert }=0.
 \end{equation}

Taking  $x_k \equiv x$ gives $\lim_{k \to  \infty}p_k([\beta]) = o.$
Moreover, (\ref{eq.formule5}) shows that
$$\lim_{k \to  \infty}p_k(j^o(x_k))=o$$
for any $x_k\rightarrow x$.
In particular, $\{ p_k \}$ is stable at $j^o(x)=\beta(1)$ in the sense
of \cite[Def. 4.1]{frances.degenerescence}.
 It was proved in \cite{frances.degenerescence} Lemma $4.3$, that if
 $\{ p_k \}$ has this stability property at some point $z$,  it admits a pointwise 
 stable holonomy sequence at $z$. Lemma \ref{lem.translations} is thus
 proved in this case. 


\subsubsection*{Second case: $\lim_{k \to  \infty} t_kq(v_k)=a \in \RR$}

This time, take $x \in \RR^{1,2}$
 such that:
 \begin{enumerate}
  \item[(a)] $q(x)=0$.
  \item[(b)]  $\langle x,v \rangle > 0$ 
 \end{enumerate}

 Let $u \in (0,1]$, and write 
$$1 + t_ku\langle x_k,v_k \rangle + \frac{ t_k^2 u^2
  q(v_k)q(x_k)}{4}=1 +t_k u \left(  \langle x_k,v_k \rangle + \frac{u}{4}t_kq(v_k)q(x_k) \right).$$
Because $ \langle x_k,v_k \rangle + \frac{u}{4}t_kq(v_k)q(x_k) $
 tends to $\langle x,v \rangle$ as $k \to \infty$,
 $$1 + t_ku\langle x_k,v_k \rangle + \frac{ t_k^2 u^2 q(v_k)q(x_k)}{4} 
 \geq \frac{1}{2}t_ku \langle x,v \rangle \qquad \mbox{for } k \
 \mbox{large enough}.$$ 
  
  We infer from (\ref{eq.formule1}) that:
  \begin{equation}
   \label{eq.formule6}
   \Vert x_k(u) \rVert \leq \frac{2}{\langle x,v \rangle } \left(
     \frac{\lVert  x_k \rVert }{t_k} + \frac{|q(x_k)|}{2}\right).
  \end{equation}

  This inequality holds trivially for $u=0$, giving
  $$\lim_{k \to  \infty}{\sup_{u \in [0,1]} \lVert x_k(u) \rVert}=0.$$
  Taking $x_k \equiv x$ gives
  $\lim_{k \to  \infty}p_k([\beta])=o$.
Moreover, if  $\{ x_k \}$ is any sequence converging to $x$, inequality (\ref{eq.formule6})  shows that
$\lim_{k \to \infty}p_k(j^o(x_k))=o $. As in the first case, this implies that $\{ p_k \}$ admits a pointwise, 
stable holonomy sequence at $j^o(x)=\beta(1)$, and Lemma \ref{lem.translations} is proved. 
\end{preuve}

\bigskip

\bibliographystyle{amsplain}
\bibliography{karinsrefs}

\begin{tabular}{lll}
Karin Melnick  & \quad\qquad & Charles Frances \\
Department of Mathematics & \quad\qquad & Institut de Recherche Math\'ematique Avanc\'ee  \\
4176 Campus Drive & \qquad \qquad & 7 rue Ren\'e-Descartes \\
University of Maryland & \quad\qquad & Universit\'e de Strasbourg \\
College Park, MD 20742 &\quad \qquad & 67085 Strasbourg Cedex  \\
USA &\quad \qquad & France \\
karin@math.umd.edu &\quad \qquad & frances@math.unistra.fr
\end{tabular}

\end{document}